\newcommand\footnotetext@\relax
\let\footnotetext@\@footnotetext
\newcommand{\MSC}[2][2010]{%
  \unskip\protected@xdef\@thefnmark{}%
  \protect\footnotetext@{\kern-1.8em{\itshape MSC#1\spacefactor3000:}\/ #2}}
\newcommand{\keywords}[1]{
  \unskip\protected@xdef\@thefnmark{}%
  \protect\footnotetext@{\kern-1.8em{\itshape Keywords\spacefactor3000:}\/ #1}}
\newcommand{\email}[1]{%
 \unskip\protected@xdef\@thefnmark{}%
 \protected@xdef\@thanks{\@thanks\protect\footnotetext@{{\itshape Email\spacefactor3000:}\/ \texttt{#1}}}}
\gdef\@date{}
\theoremstyle{definition}
\newtheorem{theorem}{Theorem}[section]
\newtheorem*{thm-intro}{Theorem}
\newtheorem{proposition}[theorem]{Proposition}
\newtheorem{corollary}[theorem]{Corollary}
\newtheorem{definition}[theorem]{Definition}
\newtheorem{example}[theorem]{Example}
\newtheorem{remark}[theorem]{Remark}
\newtheorem{problem}[theorem]{Problem}
\newtheorem{lemma}[theorem]{Lemma}
\newcommand{\from}{\leftarrow}
\newcommand{\Hom}{\operatorname{Hom}}
\newcommand{\id}{\mathrm{id}}
\newcommand{\R}{\mathbb{R}}
\DeclareMathOperator{\rk}{rk}
\newcommand{\xfrom}{\xleftarrow}
\newcommand{\xto}{\xrightarrow}
\newcommand{\Z}{\mathbb{Z}}
\def\id{{\rm id}}
\def\R{{\mathbb R}}
\def\C{{\mathcal C}}
\def\rk{{\rm rk\,}}
\def\xto{\xrightarrow}
\def\xfrom{\xleftarrow}
\def\xmono#1{\overset{#1}{\rightarrowtail}}
\def\toto{\rightrightarrows}
\def\then{\Rightarrow}
\def\action{\curvearrowright}
\def\r#1{|_{#1}}
\def\Hom{{\rm Hom}}
\begin{document}

\title{\bf Higher Vector Bundles}
  \MSC[2020]{Primary 18N50, 58H05\, Secondary 18G35}
  \keywords{Simplicial object, fibrations, 
    Lie groupoid, vector bundle, representation up to homotopy}
\author{Matias del Hoyo%
 \and Giorgio Trentinaglia}
 
\maketitle

\

\begin{abstract} 
We introduce higher analogs for cleavages in the context of (Kan) simplicial fibrations. We apply them to obtain geometric models for representations up to homotopy of (higher) Lie groupoids. Concretely, we set an equivalence between representations up to homotopy and simplicial vector bundles endowed with a cleavage. Our result is an incarnation of the Higher Grothendieck Correspondence, it can be seen as a relative Dold-Kan Theorem and extends earlier work of Gracia-Saz and Mehta on VB-groupoids. 
\end{abstract}

\tableofcontents



\section{Introduction}

Higher Differential Geometry is an active branch of mathematics that uses categorical and simplicial methods to study the topology and geometry of smooth manifolds. (Higher) Lie groupoids play a fundamental role in the theory, providing a unified framework for studying group actions, fiber bundles, and foliations \cite{mm}, they are a natural language for geometric quantization and other quantum theories \cite{dw}, and also a powerful tool for constructing local models and desingularizations in Poisson geometry \cite{cfm}.

An ordinary representation realizes the arrows of a Lie groupoid $G=(G_1\toto G_0)$ as linear isomorphisms between the fibers of a vector bundle $E\to G_0$, respecting identities and composition. Foliated and equivariant bundles are examples of representations. The differentiation of representations yields flat connections on the Lie algebroid of $G$, and Morita equivalent Lie groupoids have equivalent categories of representations. However, these categories may fail to provide complete information about the groupoids themselves: there is neither a general notion of adjoint representation \cite{ac} nor a general Tannaka duality theorem \cite{t1,t2}.

Representations up to homotopy were introduced in \cite{ac}, working with chain complexes of vector bundles, and imposing involved coherence homotopies ruling the compositions. This generalization includes the adjoint and co-adjoint representations as examples, and can easily be adapted to higher Lie groupoids. The 2-term case can be codified by VB-groupoids, studied by Mackenzie \cite{mk} and others. 
The correspondence, set in \cite{gsm}, splits a VB-groupoid by the choice of a cleavage, in a smooth linear version of Grothendieck correspondence between fibered categories and pseudo-functors \cite{sga1}. The VB approach has been useful when studying the Morita invariance and the Lie theory of representations up to homotopy \cite{bcdh,dho}.

A fundamental principle of Higher Category Theory is to work with intrinsic objects, avoiding complicated coherence isomorphisms. As an example, Grothendieck introduced fibered categories, where the existence of pullbacks is emphasized over their explicit construction \cite{sga1}. Another example, proposed by Duskin, involves using the horn-filling to model higher categories, so simplices are composed by forming a horn and picking the remaining face of a filling \cite{duskin}.
This principle brings the problem of finding intrinsic geometric models for representations up to homotopy. In this paper, we present a solution to this problem.

First, given a representation up to homotopy $R: G\action E$, we construct a simplicial vector bundle $q: G\ltimes_RE\to G$, the {\it semi-direct product}. This is a common generalization of both the Dold-Kan construction on a chain complex and the Grothendieck construction on a pseudo-functor.
Then, we define a {\it higher vector bundle} $q: V\to G$ to be a simplicial vector bundle that is a simplicial (Kan) fibration, and define a (higher) {\it cleavage} $C_n\subset V_n$ to be a sub-bundle giving distinguished fillings for the $(n,k)$-horns, $k<n$. Normal weakly flat cleavages are compatible with faces and degeneracies, and weakly flat morphisms preserve some distinguished simplices, see Section \ref{section:hvb} for a precise definition of these notions. Our Main Theorem is the following:


\begin{theorem}\label{thm:main-intro}
Given $G$ a higher Lie groupoid, the semi-direct product yields an equivalence
$$\ltimes:Rep_{\geq0}^\infty(G)\to VB^\infty_+(G)$$
between the category of (nonnegatively graded) representations up to homotopy $R: G\action E$ and the category whose objects are the higher vector bundles $q: V\to G$ endowed with a normal weakly flat cleavage and whose arrows are the weakly flat morphisms.
\end{theorem}

When $G$ is a manifold, this recovers the Dold-Kan correspondence between chain complexes and simplicial vector bundles. When $G$ is a Lie groupoid, this extends the correspondence between ordinary representations and linear action morphisms, see \cite[Prop. 3.3.5]{dh}, and the Gracia Saz-Mehta correspondence between 2-term representations up to homotopy and VB-groupoids \cite{gsm}. 
Even when $G$ is a set-theoretic higher groupoid, our work yields a novel interesting construction.
Lurie's relative nerve $N_fC$ computes the unstraightening or homotopy colimit of a (strict) diagram of simplicial sets $f: C\to sSets$ indexed by a small category \cite[\S 3.2.5]{lurie}, see also \cite{hm}, while our construction $G\ltimes_R E$ models the homotopy colimit of a homotopy diagram $R:G\to Ch_{\geq0}(Vect)$ of chain complexes indexed by a higher groupoid.

We anticipate higher and derived versions for Theorem \ref{thm:main-intro}, and plan to study some of them in future works, keeping things here as simple as possible, aiming to a potentially diverse audience of geometers, topologists, and mathematical physicists. 
In analogy to the 2-term case, we expect our results to help in the problem of integrating representations up to homotopy, see \cite{as}, and to play a role in the Morita invariance of representations up to homotopy, see \cite{ac}. The semi-direct product is used to establish a cohomological version of the Morita invariance in \cite{dhos}.
Further potential applications may include an intrinsic approach to the tensor product of representations up to homotopy \cite{acd}, and explicit formulas for the higher Riemann-Hilbert correspondence \cite{bs}. 

\smallskip

{\bf Organization: }
In Section 2 we set notations and conventions for simplicial sets, and we introduce (higher) cleavages and the notion of flatness. In Section 3 we revisit Dold-Kan and give new formulas 
inspired by the horn-filling property. In Section 4 we review representations up to homotopy of higher Lie groupoids, and Section 5 is where we introduce higher vector bundles and their cleavages. 
Section 6 proves the first half of the theorem, by constructing the semi-direct product. Section 7 proves the other half, by splitting vector bundles with the aid of a suitable cleavage. The final Section 8 discusses future lines of work.

\smallskip

{\bf Acknowledgment: } 
We are grateful to Henrique Bursztyn, Eduardo Dubuc, Tim Porter, Cristian Ortiz, Jim Stasheff, Fernando Studinski, Mahmoud Zeinalian, and Chenchang Zhu for their valuable comments and discussions. We also thank the referees for their insightful feedback. This work was supported by CNPq grant 310289/2020-3 and FAPERJ grant E-26/201.305/2021 (Brazil), and FCT grants SFRH/BPD/81810/2011 and UID/MAT/04459/2020 (Portugal).




\section{Higher groupoids, fibrations, and cleavages}
\label{section:cleavage}

Here we set some notations and conventions about simplicial sets, discuss the horn-filling condition and its role in defining higher categories and fibrations, and introduce (higher) cleavages and the corresponding push-forward operations.

\

The category of finite ordinals $\Delta$ has the ordered sets $[n]=\{n,n-1,\dots,1,0\}$ as objects and the order-preserving maps as arrows.
A {\bf simplicial set} is a contravariant functor $X:\Delta^{\rm op}\to Sets$,  it consists of a set of $n$-simplices $X_n$ for each $n\geq0$, and of structure maps $\theta^*:X_m\to X_n$ for each $\theta:[m]\to[n]$, so $(\theta_2\theta_1)^*=\theta_1^*\theta_2^*$ and $\iota_n^*=\id_{X_n}$, where $\iota_n:[n]\to[n]$ is the identity. A {\bf simplicial map} $f: X\to Y$ is a natural transformation, or equivalently, a collection of maps $f_n: X_n\to Y_n$ such that $f\theta^*=\theta^*f$ for every $\theta$. We write $sSet$ for the category of simplicial sets.


\begin{remark}\label{rmk:simplicial-identities}
Equivalently, 
a simplicial set is a system $(X_n,d_i,u_j)$, where $X_n$ is the set of $n$-simplices for each $n\geq0$, $d_i:X_n\to X_{n-1}$ are the {\bf face} maps and $u_j:X_n\to X_{n+1}$ are the {\bf degeneracy} maps, $0\leq i,j\leq n$, so that the {\bf simplicial identities} hold:
$$d_id_j=d_{j-1}d_i \quad i<j \qquad
d_i u_j=\begin{cases}	u_{j-1}d_i & i<j \\
					1  & i=j,j+1 \\
					u_jd_{i-1} & i>j
					\end{cases}
\qquad u_iu_j=u_{j+1}u_i \quad i\leq j.
$$
The correspondence between both definitions is given by setting $d_i=\delta_i^*$ and $u_j=\upsilon_j^*$, where $\delta_i:[n-1]\to[n]$ is the injection missing $i$ and $\upsilon_j:[n+1]\to[n]$ is the surjection repeating $j$. 
A simplex $x\in X_n$ is {\bf degenerate} if it is in the image of some $u_j$. We warn the reader that this is not the standard notation, where $\sigma_j$ and $s_j$ are used instead of $\upsilon_j$ and $u_j$. Our notation is inspired by the nerve of a groupoid, where the degeneracies include a groupoid unit, and allows us to keep $\sigma_k$ and $s_k$ to denote {\it generalized} sources in Definition \ref{def:flat} and later. 
\end{remark}


The {\bf $n$-simplex} $\Delta[n]\in sSet$ is the the simplicial set represented by $[n]$. By Yoneda, there is a natural bijection $\hom(\Delta[n],X)\cong X_n$, $f\mapsto f(\iota_n)$, so $\Delta[n]$ is freely generated by $\iota_n\in \Delta[n]_n$.
The {\bf $n$-sphere} $\partial\Delta[n]\subset\Delta[n]$ is spanned by the faces $d_i(\iota_n)$, and the {\bf $(n,k)$-horn} $\Lambda^k[n]\subset\Delta[n]$ is spanned by the faces $d_i(\iota_n)$ with $i\neq k$.
Note that $\Lambda^1[1]\cong \Lambda^1[0]\cong\Delta[0]$. 
We denote by $X_{n,k}=\hom(\Lambda^k[n],X)$ the space of $(n,k)$-horns in $X$.

\begin{definition}\label{def:hg}
A simplicial set $X$ is a {\bf higher groupoid} 
if every horn in $X$ has a filling, namely if every {\bf horn map} $d_{n,k}:X_n\to X_{n,k}$ is surjective, and it is of {\bf order $N$}, or $N$-groupoid, if the filling is unique for $n>N$, namely $d_{n,k}$ is bijective for $n>N$. 
\end{definition}

When $n=1$, 
the horn maps $d_{1,0}, d_{1,1}:X_1\to X_0$ identify with the faces $d_1, d_0$.
Intuitively, some simplices $x_1,\dots,x_n\in X_n$ are composable if they are the faces of a horn, a composition $x_0$ is the remaining face of a filling, and any two compositions are homotopic because two fillings fit into a higher horn that can also be filled.

\begin{example} \label{example:nerve} \

\begin{enumerate}[a)]
\item 
Write $\Delta^n$ for the topological $n$-simplex with vertices $e_i$, $0\leq i\leq n$, and $\bar\theta:\Delta^n\to \Delta^m$ for the affine map given by $\bar\theta(e_i)=e_{\theta(i)}$. 
Given $M$ a topological space, its {\bf singular complex} $SM\in sSet$ has continuous maps $x:\Delta^n\to M$ as $n$-simplices and structure maps $\theta^*(x)=x\bar\theta$. The singular complex $SM$ is always a higher groupoid, for $\Delta^n$  retracts onto the union of its faces but one. If $M$ admits a non-constant path then $SM$ is not of order $N$ for any $N$.
\item Regard $[n]$ as a category with an arrow $i\to j$ if $i\leq j$, and each $\theta:[m]\to[n]$ as a functor. Given $C$ a small category, its {\bf nerve} $NC\in sSet$ has functors $g:[n]\to C$ as $n$-simplices and
structure maps $\theta^*(g)=g\theta$. We can visualize $g\in NC_n$ as a chain of composable arrows: 
$$x_n\xfrom{g_n}x_{n-1}\xfrom{g_{n-1}}\dots\xfrom{g_2} x_1\xfrom{g_1}x_0.$$ 
The face $d_i$ erases $x_i$, and either composes $g_{i+1}g_i$, or drops $g_1$ or $g_n$. The degeneracy $u_j$ repeats $x_j$ and inserts an identity. $NC$ is a higher groupoid if and only if $C$ is a groupoid. A higher groupoid has order 1 if and only if it is isomorphic to the nerve of a groupoid.
\end{enumerate}
\end{example}

The relative notion of a higher groupoid is that of a fibration.

\begin{definition} 
A simplicial map $q: X\to Y$ is a {\bf (Kan) fibration} if every horn in $X$ admits a filling with prescribed projection, namely if the {\bf relative horn maps} $d_{n,k}^q: X_n\to X_{n,k}\times_{Y_{n,k}} Y_n$ are surjective. A fibration $q$ is of {\bf order $N$} if $d_{n,k}^q$ is injective when $n>N$. 
$$\xymatrix{\Lambda^k[n] \ar[r]^\forall \ar[d] & X \ar[d]^q \\
 \Delta[n] \ar@{-->}[ur]^{\exists} \ar[r]^\forall & Y}$$
\end{definition}

Note that $X$ is a higher groupoid (of order $N$) if and only if the projection $q: X\to\ast$ is a fibration (of order $N$). If $f:X\to Y$, $g:Y\to Z$ are fibrations of order $N_f,N_g$, then $gf:X\to Z$ is a fibration of order $\max\{N_f,N_g\}$. In particular, if $Y$ is a higher groupoid of order $N_Y$ and $q: X\to Y$ is a fibration of order $N_q$, then $X$ is a higher groupoid of order $\max\{N_Y,N_q\}$. 


In the theory of fibered categories, one can pull back or push forward objects between fibers by using a cleavage, a choice of cartesian arrows playing the role of a connection, see \cite[VI]{sga1}. We now introduce a higher version of cleavage, that is both new and central to our paper, and that allows us to push forward simplices between fibers.

\begin{definition}
Given $q:X\to Y$ a fibration, an {\bf $n$-cleavage} is a subset $C_n \subset X_n$ so that the relative horn map $d_{n,k}^q:C_n\to X_{n,k}\times_{Y_{n,k}} Y_n$ is bijective for all $k<n$. $C_n$ is {\bf normal} if it contains the degenerate simplices.
A {\bf cleavage} $C=\{C_{n}\}$ consists of an $n$-cleavage for each $n$. 
\end{definition}

We can think of a simplex $x\in C_n$ as being {\it cartesian}, {\it thin} or {\it horizontal}. Given $y\in Y_n$ a simplex and $x_{n,k}\in X_{n,k}$ a lifting of the $(n,k)$-horn of $y$, $k<n$, there is a unique lifting $x\in C_n\subset X_n$ of $y$ with $(n,k)$-horn equal to $x_{n,k}$. For $n=1$ this recovers the idea of a distinguished lifting of a path with a prescribed source. If 
the fibration $q$ has order $N$ and $n>N$ then the only $n$-cleavage is $C_n=X_n$, so a cleavage $C$ can be regarded as a finite sequence 
$\{C_1,C_2,\dots,C_N\}$. In particular, if $q$ has order $1$, a cleavage $C$ is the same as a 1-cleavage $C_1$.


\begin{example} \

\begin{enumerate}[a)]
    \item 
    A Serre fibration 
    $q:\tilde M\to M$ yields a simplicial fibration $Sq: S\tilde M\to SM$, and a path-lifting map in the sense of \cite[7.2]{may2} is a 1-cleavage $C_1$ making $d^q_{1,0}|_{C_1}$ a homeomorphism. If $\tilde M$ and $M$ are manifolds, $q$ is locally trivial and $H$ is an Ehresmann connection, we say that a 1-cleavage $C_1$ is {\it compatible} with $H$ if it contains the smooth horizontal paths.
    %
    \item 
    A groupoid morphism $q:\tilde G\to G$ is a fibration if and only if $Nq$ is a simplicial fibration between the nerves, a cleavage $C$ for $Nq$ is the same as a cleavage for $q$, see \cite[VI.7]{sga1}, and is normal if it contains the identities. $C$ induces a {\it fiber pseudo-functor} $F_C:G\dasharrow Gpds$, where $F(x)=q^{-1}(\id_x)$ and $F(g): F(x)\to F(y)$ is a {\em parallel transport} between the fibers.

\end{enumerate}
\end{example}


\begin{remark}\label{rmk:tcp}
Given a cleavage $C$, the last horn map $d^q_{n,n}:C_n\to X_{n,n}\times_{Y_{n,n}}Y_n$ may not be bijective. We need this because in a fibration the fibers are homotopic rather than isomorphic, and a parallel transport is a homotopy equivalence rather than an isomorphim. As a simple example, let $q:X\to Y$ be the groupoid morphism obtained from $\nu_2:[2]\to[1]$ by formally inverting the arrows, so $q(0)=0$ and $q(1)=q(2)=1$. A cleavage must contain $0\from 1$ and $0\from 2$ as liftings of $0\from 1$, and 
$d^q_{1,1}(0\from 1)=(0,0\from 1)=d^q_{1,1}(0\from 2)$, so $d^q_{1,1}$ is not bijective. This is a major difference between our framework and that of twisted cartesian products \cite{may}, where the fibers are isomorphic and the parallel transports are invertible.
\end{remark}

We next introduce another original definition, a flat-like property that will characterize the semi-direct products. Write $\sigma_k:[k]\to[n]$ for the inclusion. Given $X$ a simplicial set, write $s_k=\sigma_k^*:X_n\to X_k$ for the {\it generalized source}, so $s_k(x)=x|_{\sigma_k}\in X_k$ is the first $\dim k$ face of $x$. 

\begin{definition}\label{def:flat}
A cleavage $C$ is {\bf $n$-flat over $S\subset X_0$} if given $x\in C_{n}$ such that $s_0x\in S$,
$s_kx\in C_k$ for all $k>0$, and $d_ix\in C_{n-1}$ for all $i>0$, then $d_{0}x\in C_{n-1}$ as well.
A cleavage $C$ is {\bf flat over $S$} if it is $n$-flat over $S$ for every $n$. When not specified, $S$ is assumed to be $X_0$. 
\end{definition}

\begin{example}\

\begin{enumerate}[a)]
    \item Let $q:\tilde M\to M$ be a locally trivial smooth map and $H$ a complete Ehresmann connection. We say that
    a smooth triangle $x\in S\tilde M_2$ is {\it horizontal} if 
    it maps segments in $\Delta^2$ parallel to $e_0e_1$ into smooth horizontal paths. We say that a cleavage $C$ for $Sq$ is compatible with $H$ if $C_1$ and $C_2$ contain the smooth horizontal paths and triangles. Given $H$, we can always find a compatible cleavage $C$, and if this is 2-flat, then $H$ is flat in the usual smooth sense.  
    \item Let $q:\tilde G\to G$ be a groupoid fibration and $C$ a normal cleavage. 
    Then $C$ is flat if and only if it is closed under composition. A flat cleavage is called a {\em splitting} in \cite[VI.9]{sga1}, and its fiber pseudo-functor is a strict functor. A less restrictive situation is when there is a groupoid section $s: G\to\tilde G$ for $q$. If $C$ is a cleavage containing $s(G_1)$ then it is flat over $s(G_0)$. 
\end{enumerate}
\end{example}

A little simplicial algebra shows that flat cleavages satisfy the following stronger condition.

\begin{lemma}\label{lemma:cleavage-definition}
Let $q: X\to Y$ be a fibration, $C$ a cleavage flat over $S$, and $i_0<n$. If $x\in C_{n}$ is such that $s_0x\in S$,
$s_kx\in C_k$ for all $k>0$, and $d_ix\in C_{n-1}$ for all $i\neq i_0$, then $d_{i_0}x\in C_{n-1}$. 
\end{lemma}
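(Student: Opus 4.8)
The plan is to reduce the statement to the defining flatness condition by induction on $i_0$, the base case $i_0=1$ itself reducing to flatness. Fix $w$ as in the statement with $0<i_0<n$. The idea is that $w$ and a suitable auxiliary $n$-simplex $w'$ can be exhibited as the $d_{i_0-1}$- and $d_{i_0+1}$-faces of a single $(n+1)$-simplex $\Xi\in C_{n+1}$; since these two faces meet exactly in $d_{i_0-1}d_{i_0+1}\Xi=d_{i_0}d_{i_0-1}\Xi=d_{i_0}w$, controlling $d_{i_0}w$ amounts to controlling one face of $w'$, and the latter is governed either by flatness (when $i_0=1$) or by the inductive hypothesis with parameter $i_0-1$ (when $i_0\ge2$, a legitimate instance since $0<i_0-1<n$).

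Concretely: since $0<i_0+1<n+1$, the $(n+1)$-cleavage produces a unique $\Xi\in C_{n+1}$ filling the $(n+1,i_0+1)$-horn, once we prescribe its remaining faces $d_j\Xi$ ($j\ne i_0+1$) compatibly together with a projection to $Y_{n+1}$. I would set $d_{i_0-1}\Xi:=w$ and build the other faces out of the already-cartesian faces $d_jw$ ($j\ne i_0$) of $w$ — via degeneracies, and where necessary cartesian fillers supplied by the lower cleavages $C_k$ — arranged so that the horn-compatibility relations hold and so that $w':=d_{i_0+1}\Xi$ is good, i.e. $s_0w'\in S$ and $s_kw'\in C_k$ for $k>0$, with $d_jw'\in C_{n-1}$ for every $j\ne i_0-1$. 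Using the simplicial identities $d_jw'=d_{i_0}(d_j\Xi)$ for $j\le i_0$ and $d_jw'=d_{i_0+1}(d_{j+1}\Xi)$ for $j\ge i_0+1$, these requirements turn into explicit conditions on the prescribed faces of $\Xi$ that can be checked by a bounded computation. Granting this, flatness (resp. the inductive hypothesis) applied to $w'$ gives $d_{i_0-1}w'\in C_{n-1}$, and $d_{i_0-1}w'=d_{i_0-1}d_{i_0+1}\Xi=d_{i_0}d_{i_0-1}\Xi=d_{i_0}w$, as wanted.

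The main obstacle is precisely the choice of the remaining faces of $\Xi$ and the verification that the hypotheses pass to $w'$ — above all the goodness of $w'$. One cannot take the naive $\Xi=u_{i_0-1}w$: then $w'$ becomes a degeneracy of $d_{i_0}w$ and the argument is circular, so the prescribed faces of $\Xi$ must be non-degenerate in the directions feeding into $w'$ and must be expressed using only the cartesian faces $d_jw$ with $j\ne i_0$, never $w$ or $d_{i_0}w$ themselves; this is exactly where the lower cleavages are needed. A further subtlety is that when $i_0=1$ the equality $d_0\Xi=w$ does not determine the initial vertex $s_0\Xi=s_0w'$ of $\Xi$, so one must additionally prescribe that vertex, and a compatible initial face, to lie in $S$. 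Once the combinatorics of this one horn is set up correctly, the rest is a routine unwinding of the simplicial identities.
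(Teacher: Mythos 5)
There is a genuine gap, and it sits exactly at the point you flag as "the main obstacle" but do not resolve. To apply the flatness condition (or your inductive hypothesis) to $w'=d_{i_0+1}\Xi$, you need the full list of hypotheses for $w'$, and this list includes $s_n w'=w'\in C_n$: both the definition of flatness over $S$ and the statement of the lemma require the simplex itself to be cartesian. But $w'$ is precisely the face \emph{omitted} from the $(n+1,i_0+1)$-horn you fill, so the $(n+1)$-cleavage gives you no control whatsoever over it; no choice of the prescribed faces $d_j\Xi$, $j\neq i_0+1$, can force $d_{i_0+1}\Xi$ into $C_n$, because faces of cartesian simplices need not be cartesian --- that failure is the entire reason the lemma is nontrivial, so any attempt to deduce $w'\in C_n$ from flatness-type reasoning applied to $\Xi$ is circular (flatness only ever controls the $0$-face, and the missing face here is a middle one). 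Passing to the $(n+1,i_0-1)$-horn instead merely moves the same problem to the face $d_{i_0-1}\Xi$, which would then have to be shown to equal $w$. The secondary issue is that the remaining horn faces of $\Xi$, with all the required sub-simplices cartesian, are not produced by "a bounded computation": arranging $s_k w'\in C_k$ and $d_jw'\in C_{n-1}$ amounts to building a whole compatible family of cartesian sub-simplices by a skeletal induction, of the same nature as the construction of $h_i(x)$ in Lemma \ref{lemma:push-forward} --- which, note, itself relies on the present lemma.

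Any repair along your lines --- e.g.\ constructing a cartesian $w''\in C_n$ separately and identifying it with the problematic face by uniqueness of relative horn fillers --- collapses into the paper's argument, which never leaves dimensions $n-1$ and $n$ and needs no induction on $i_0$: one replaces $v=d_{i_0}w$ by the cartesian $v'\in C_{n-1}$ with the same $(n-1,0)$-horn and projection, fills the $(n,0)$-horn of $w$ with $v'$ substituted for $v$ to get $w'\in C_n$, applies flatness to $w'$ to conclude $d_0w'\in C_{n-1}$, and then uses bijectivity of the relative horn maps on $C$ twice (at the $(i_0-1)$-horn in dimension $n-1$ to get $d_0w=d_0w'$, then at the $i_0$-horn in dimension $n$ to get $w=w'$), so that $d_{i_0}w=v'\in C_{n-1}$. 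I recommend abandoning the ascent to dimension $n+1$ and arguing this way.
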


\begin{proof}
Suppose such an $x$ is given. If $i_0=0$ this is by definition, so we can suppose $i_0>0$. We want to show that $z=d_{i_0}(x)\in C_{n-1}$. Let $z'$ be the unique simplex in $C_{n-1}$ with the same $(n-1,0)$-horn and projection as $z$. 
Let $x'$ be the unique simplex in $C_n$ with the same projection as $x$ and with faces $d_i(x')=d_{i}(x)$ for $i\notin\{i_0,0\}$ and $d_{i_0}(x')=z'$.
Note that $d_i(x')\in C_{n-1}$ for all $i\neq0$, and that $s_k(x')=s_{k}(x)\in C_k$ for all $k< n$. Since $C$ is flat over $S$, it follows that $d_0(x')\in C_{n-1}$. Then $d_0(x)=d_0(x')$, for both have the same $(n-1,i_0-1)$-horn, the same projection, and belong to $C$. Analogously, $x=x'$ because both have the same $(n,i_0)$-horn, the same projection, and belong to $C$. Finally, $z=d_{i_0}(x)=d_{i_0}(x')=z'\in C_{n-1}$.  
\end{proof}

A simplicial fibration $q: X\to Y$ has the right lifting property with respect to the anodyne extension, see \cite[I.4]{gj}. 
This is the class of injective simplicial maps $i: K\to L$ containing the horn inclusions $\Lambda^k[n]\to\Delta[n]$ and closed under isomorphisms, co-base changes, retracts, direct sums, and countable compositions. The lifting is not unique in general, but it may be unique if we impose some compatibility with a cleavage. Let us focus on a very specific situation. Given $x\in X_n$, $y=q(x)$, we will define a push-forward $p_i(x)\in X_n$, by transporting $x$ along a lifting of $u_{i+1}(y)$, keeping the position of each vertex $x|_j$, $j\neq i$, and pushing $x|_i$ to the next fiber horizontally over $y|_{\{i,i+1\}}$. We identify an injective  $\alpha:[k]\to[n]$ with its image, and write $x|_\alpha=x\alpha=\alpha^*(x)\in X_k$ for the restriction. 

\begin{lemma}\label{lemma:push-forward}
Given $q:X\to Y$ a simplicial fibration, $C$ a cleavage, $x\in X_n$, $y=q(x)\in Y_n$, and $i<n$, there is a unique $h_i(x)\in C_{n+1}$ such that (i) $d_{i+1}h_i(x)=x$, (ii) $qh_i(x)=u_{i+1}(y)\in Y_{n+1}$, and (iii) $h_i(x)|_\alpha\in C$ for every $\alpha$, $\{i,i+1\}\subset\alpha\subset[n+1]$.
$$\begin{matrix}\xymatrix{
\Delta[n] \ar[r]^x \ar[d]_{\delta_{i+1}} & X \ar[d]^q \\ 
\Delta[n+1] \ar@{-->}[ru]^{h_i(x)} \ar[r]_(.6){u_{i+1}(y)} & Y
}\end{matrix}
\qquad\qquad\qquad\begin{matrix}
\begin{tikzpicture}[scale=.5]
\draw[thick](5,0) -- (1,4);
\draw[thick](3,2) -- (2,2);
\draw[dashed](2,3) -- (2,2);
\draw[dashed](1,4) -- (2,2);
\draw[dashed](4,1) -- (2,2);
\draw[dashed](5,0) -- (2,2);
\draw[fill] (1,4) circle (3pt)node[above right]{\footnotesize$n$};
\draw[fill] (1,4) circle (3pt);
\draw[fill] (2,3) circle (3pt)node[above right]{\footnotesize$i+1$};
\draw[fill] (3,2) circle (3pt)node[above right]{\footnotesize$i$};
\draw[fill] (4,1) circle (3pt);
\draw[fill] (5,0) circle (3pt) node[above right]{\footnotesize$0$};
\draw[fill] (2,2) circle (3pt);
\end{tikzpicture}\end{matrix}
$$
We call $h_i(x)$ the $i$-th {\bf  transport} and $p_i(x)=d_ih_i(x)$ the $i$-th {\bf push-forward} of $x$.  Moreover,
\begin{enumerate}[1)]
    \item If $C$ is normal and $y= u_i(y')$ for some $y'\in Y_{n-1}$, then $h_i(x)=u_i(x)$ and $p_i(x)=x$;
    \item If $C$ is flat over $S$ and $\alpha\subset[n]$ is such that $x|_\alpha\in C$, $x|_{\alpha(0)}\in S$ and $x|_{\alpha\sigma_r}\in C$ whenever $\alpha(r)\leq i$, then $p_i(x)|_\alpha\in C$.
\end{enumerate}
\end{lemma}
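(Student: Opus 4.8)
The first claim is an existence-and-uniqueness statement for a lift $h_i(x)$ fitting into the square on the left, subject to the extra normalization that all of its restrictions to faces containing both vertices $i$ and $i+1$ lie in the cleavage $C$. My plan is to build $h_i(x)$ skeleton by skeleton, running over the faces $\alpha\subset[n+1]$ in order of increasing dimension, using the defining bijectivity of the relative horn maps $d^q_{m,k}\colon C_m\to X_{m,k}\times_{Y_{m,k}}Y_m$ for $k<m$. The key observation is that the prescribed data already pin down $h_i(x)$ on the sub-simplicial-set $\Delta[n]\cong\im(\delta_{i+1})$ (namely $d_{i+1}h_i(x)=x$), so on every face $\alpha$ not containing the vertex $i+1$ we already know the value, and on every face $\alpha\ni i+1$ but $\alpha\not\ni i$ the restriction is a degenerate simplex coming from $u_{i+1}qx$ downstairs — here I'd use that $\sigma_{i+1}\colon$ the relevant order map factors through a degeneracy, so no choice is needed. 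The genuinely new faces are exactly those $\alpha$ with $\{i,i+1\}\subset\alpha$: for such $\alpha$, the face $\alpha\setminus\{i\}$ (obtained by deleting vertex $i$, which is an inner deletion since $0\le i<i+1\le$ top of $\alpha$, hence it is a $(|\alpha|-1,\,j)$-horn-completing face for some $0<j<|\alpha|-1$... wait, I need $i$ to be an \emph{inner} vertex of $\alpha$) gives the horn, and the remaining faces of $\alpha$ are already defined and lie in $C$ by induction, so $x|_\alpha$ — I mean $h_i(x)|_\alpha$ — is forced to be the unique element of $C_{|\alpha|-1}$ completing that horn over the prescribed projection. The uniqueness of $h_i(x)$ is then immediate from the uniqueness built into each step, and one checks the simplicial identities are respected because at each stage the choice was the unique cleavage-filler of a compatible horn. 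I would phrase this cleanly by noting $\delta_{i+1}\colon\Delta[n]\hookrightarrow\Delta[n+1]$ together with the degeneracy $u_{i+1}$ exhibits the inclusion as built from the pushout data, and the cleavage condition gives the unique extension.

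For part (i): if $qx=u_i(z)$ for some $z\in Y_{n-1}$, then $u_i(x)\in C_{n+1}$ by normality of $C$ (it is degenerate), and I claim it satisfies the three defining properties of $h_i(x)$. Indeed $q(u_i x)=u_i(qx)=u_i u_i(z)=u_{i+1}u_i(z)=u_{i+1}(qx)$ by the simplicial identity $u_iu_i=u_{i+1}u_i$; and $d_{i+1}u_i(x)=x$ by the identity $d_{i+1}u_i=\id$; and every restriction $u_i(x)|_\alpha$ with $\{i,i+1\}\subset\alpha$ is again degenerate along the repeated vertex, hence in $C$ by normality. By the uniqueness just proved, $h_i(x)=u_i(x)$, and therefore $p_i(x)=d_i h_i(x)=d_i u_i(x)=x$, again by $d_iu_i=\id$.

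For part (ii): this is where I expect the real work, and it is essentially an application of the flatness Lemma \ref{lemma:cleavage-definition} (the strengthened flat-cleavage condition). Fix $\alpha$ with $x|_0\in S$, $x|_\alpha\in C$, and $x|_{\alpha\sigma_r}\in C$ whenever $\alpha(r)\le i$. I want $p_i(x)|_\alpha=d_i h_i(x)|_\alpha\in C$. The plan is to look at the face $\beta=\alpha\cup\{i+1\}$ of $[n+1]$ and the simplex $w=h_i(x)|_\beta$. By construction $w\in C$, its initial vertex $w|_0=x|_0\in S$ (taking $0\in\alpha$, else reduce to the relevant initial vertex — one should check $0$ lands correctly, or more precisely that $s_0 w\in S$), and $s_k w=w|_{\sigma_k}\in C$ and $d_j w\in C$ for all $j$ except possibly the one corresponding to deleting vertex $i$ from $\beta$: deleting $i$ from $\beta$ recovers (a simplex adjacent to) $x|_\alpha$ and the other faces are $h_i(x)$-restrictions to faces containing $\{i,i+1\}$, hence in $C$. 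Applying Lemma \ref{lemma:cleavage-definition} forces the missing inner face, which is $d_i w$ up to reindexing, to lie in $C$; and $d_i w = d_i h_i(x)|_\beta$ restricts to $p_i(x)|_\alpha$. The bookkeeping of which position $i$ occupies inside $\beta$ versus inside $[n+1]$, and verifying that the hypotheses of Lemma \ref{lemma:cleavage-definition} hold exactly (in particular that the \emph{remaining} faces $s_k w\in C_k$, which is where the extra hypothesis $x|_{\alpha\sigma_r}\in C$ for $\alpha(r)\le i$ gets used, since $s_k w$ involves precisely the initial-segment faces of $w$ and those sit below vertex $i$ when $k$ is small), is the main obstacle — it is a careful but routine simplicial-index chase, and I would isolate it as the crux of the argument.
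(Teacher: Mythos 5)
Your overall strategy coincides with the paper's: build $h_i(x)$ by induction on the faces $\alpha\supset\{i,i+1\}$ using the unique cleavage fillings of relative horns, prove (i) by checking that $u_i(x)$ satisfies the defining properties and invoking uniqueness, and prove (ii) by applying Lemma \ref{lemma:cleavage-definition} to the prism face $w=h_i(x)|_{\alpha\cup\{i+1\}}$. However, two points in your write-up are wrong or left unresolved, and the bookkeeping you defer as ``routine'' is precisely where your sketch mis-assigns the hypotheses. First, the claim that on a face $\alpha\ni i+1$, $\alpha\not\ni i$ the restriction of $h_i(x)$ ``is a degenerate simplex coming from $u_{i+1}qx$, so no choice is needed'' is false: only the projection to $Y$ is degenerate there; upstairs these faces are genuinely non-degenerate in general (already the new vertex $h_i(x)|_{i+1}$ is not determined by $x$ and $q$ alone). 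The correct statement, and the one the paper uses implicitly, is that such faces never occur among the faces of the horns in the induction (removing a vertex other than $i$ from a face containing $\{i,i+1\}$ either keeps $\{i,i+1\}$ or lands inside the copy of $x$), and they are determined a posteriori as sub-faces of the fillers on $\alpha\cup\{i\}$; uniqueness of $h_i(x)$ then follows because $[n+1]$ itself contains $\{i,i+1\}$. Second, your flagged worry ``wait, I need $i$ to be an inner vertex of $\alpha$'' is never resolved, and if it were a real requirement your induction would already fail at its base case $\alpha=\{i,i+1\}$, where $i$ sits at position $0$. It is not needed: a cleavage fills all relative $(m,k)$-horns with $k<m$, including $k=0$, and the only thing required is that the position $i'$ of $i$ in $\alpha$ satisfies $i'<\dim\alpha$, which is automatic because $i+1\in\alpha$ sits at position $i'+1$.

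In part (ii) the face accounting is off. With $\beta=\delta_{i+1}\alpha\cup\{i+1\}$ and $w=h_i(x)|_\beta$, the face of $w$ obtained by deleting the vertex $i+1$ is exactly $x|_\alpha$ — this is where the hypothesis $x|_\alpha\in C$ enters, and your sketch never uses it — while deleting the vertex $i$ gives exactly $p_i(x)|_\alpha$ (not merely something that ``restricts to'' it), since $\delta_i\alpha=\beta\setminus\{i\}$; the remaining faces of $w$ lie in $C$ by construction of $h_i$, and the initial segments $w|_{\sigma_r}$ with $r\le i'$ are the $x|_{\alpha\sigma_r}$ covered by your remaining hypothesis. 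You should also dispose of the easy case $i\notin\alpha$ separately (there $p_i(x)|_\alpha=x|_\alpha\in C$ directly, since $\delta_i\alpha=\delta_{i+1}\alpha$ avoids $i+1$), and note that when $i=\alpha(0)$ the missing face of $w$ is its $0$-face, so one concludes by the definition of flatness over $S$ rather than by Lemma \ref{lemma:cleavage-definition}, whose hypothesis requires $0<i_0$. With these corrections your argument becomes the paper's proof; as written, the uniqueness step rests on a false assertion, an unresolved obstruction that would kill the base case, and a deferral of exactly the index chase in which the hypotheses of (ii) must be placed.
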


\begin{proof}
We start by setting $h_i(x)|_{\{i+1,i\}}$ as the unique edge in $C_1$ over $y|_{\{i+1,i\}}$ with source $x|_i$. We then continue to define $h_i(x)|_\alpha\in C$ for every $\alpha$ such that $\{i+1,i\}\subset\alpha$, inductively on $\dim\alpha$, by iterated applications of the horn-filling.
Fix such an $\alpha$ and let $i=\alpha(i')$. If $j\neq i'$ then $d_j(h_i(x)|_\alpha)$ is already defined by induction, thus we can set $h_i(x)|_\alpha$ as the unique filling in $C$ of the corresponding $i'$-horn over $u_{i+1}(y)|_\alpha$. This proves the existence (and uniqueness) of the 
transport $h_i(x)$ and the push-forward $p_i(x)$.

1) In this case $u_i(x)=x\upsilon_i$ satisfies (i) and (ii). Moreover, $x\upsilon_i|_\alpha$ is degenerate if $\{i,i+1\}\subset\alpha$ and is therefore in $C$. Then $x\upsilon_i=h_i(x)$ is the transport and $p_i(x)=d_i u_i(x)=x$.

2) If $i\notin\alpha$ then $p_i(x)|_\alpha=x|_\alpha\in C$ by hypothesis. 
Suppose then that $i=\alpha(i')$.
Let $\alpha'=\delta_{i+1}\alpha\cup \{i+1\}$. We want to show that $d_{i'}(h_i(x)|_{\alpha'})=p_i(x)|_\alpha\in C$. 
By Lemma \ref{lemma:cleavage-definition}, since $h_i(x)|_{\alpha'(0)}=x|_{\alpha(0)}\in S$, it is enough to check that
$d_{j}(h_i(x)|_{\alpha'})\in C$ for every $j\neq i'$ and $s_r(h_i(x)|_{\alpha'})\in C$ for every $r$. And each of these simplices belongs to $C$, either by hypothesis, or because it agrees with $h_i(x)|_\beta$ with $\{i,i+1\}\subset\beta$.
\end{proof}

\begin{remark}\label{rmk:push-forward-faces}
By construction, 
besides $d_ih_i(x)=p_i(x)$, $d_{i+1}h_i(x)=x$ and $d_ip_i(x)=d_i(x)$, we have the following compatibilities between the faces, the transport, and the push-forward:
$$
h_id_j(x)=\begin{cases}
d_jh_{i+1}(x) & i\geq j\\
d_{j+1}h_i(x) & i+1<j
\end{cases}
\qquad
p_id_j(x)=\begin{cases}
d_jp_{i+1}(x) & i\geq j\\
d_jp_i(x) & i+1<j
\end{cases}$$ 
If $y=u_{i+1}(y')$, then $h_id_{i+1}(x)=d_{i+2}h_i(x)$ and
$p_id_{i+1}(x)=d_{i+1}p_i(x)$, for $d_{i+2}h_i(x)$ is the $i$-th transport of $d_{i+1}(x)$, namely it satisfies (i), (ii) and (iii) of Lemma \ref{lemma:push-forward}.
In general, $d_{i+2}h_i(x)$ and $d_{i+1}p_i(x)$ are like a transport and a push-forward but over a non-degenerate simplex.
\end{remark}


\begin{example}
Let $q:\tilde G\to G$ be a groupoid fibration, $C$ a normal cleavage, 
$g=(x_n\xfrom{g_n}x_{n-1}\xfrom{g_{n-1}}\cdots\xfrom{g_1}x_0)$ in $\tilde G_n$ and $i<n$. Let $c\in C_1$ be such that $s(c)=x_i$ and $q(c)=q(g_{i+1})$.
Then $p_i(g)\in \tilde G_n$ is obtained from $g$ by replacing $g_{i+1}$ by $g_{i+1}c^{-1}$ and $g_i$ by $c g_i$. 
$$\begin{matrix}\xymatrix@R=10pt@C=15pt{
  x_3  &  & & \\
    & x_2 \ar[ul]_{g_3} & & \\
 & x'_1 \ar@{.>}[u]^{g_2c^{-1}} & x_1 \ar[l]_{c}  \ar[lu]_{g_2}   &  \\
 & & &  x_0 \ar[ul]_{g_1} \ar@{.>}[llu]^{cg_1} }\end{matrix} \qquad
p_1(g_3,g_2,g_1)=(g_3,g_2c^{-1},cg_1)$$
\end{example}


A flat connection has no holonomy, the parallel transports over the horizontal and the vertical edges of a small square commute. The following proposition is an incarnation of that principle.

\begin{proposition}\label{prop:commuting-push-forward}
Let $q:X\to Y$ be a simplicial fibration, $C$ 
a cleavage that is normal and flat over $S$, and $i<n$. If
$x\in X_n$ is such that $x|_{k}\in S$ for every $k\leq i$ and $x|_{\beta}\in C$ for every $\beta\subset\sigma_i$, then
$h_{i}h_j(x)=h_{j+1}h_{i}(x)$ for all $i<j$, and
$$
p_ip_j(x)=
\begin{cases}
p_{j}p_i(x) &j>i+1\\
p_{i}p_{i+1}p_{i}(x) & j=i+1\\
p_i(x) & j=i 
\end{cases}$$
\end{proposition}

\begin{proof}
To show $h_{i}h_j(x)=h_{j+1}h_{i}(x)$ we need to prove that $h_{j+1}h_i(x)$ is the $i$-th transport of $h_j(x)$, namely that it satisfies (i), (ii), and (iii) in Lemma \ref{lemma:push-forward}.
By Remark \ref{rmk:push-forward-faces} we have $d_{i+1}h_{j+1}h_i(x)=h_{j}d_{i+1}h_i(x)=h_j(x)$, hence (i). Regarding (ii), $qh_{j+1}h_i(x)=u_{j+2}qh_i(x)=u_{j+2}u_{i+1}(y)=u_{i+1}u_{j+1}(y)$. To prove (iii), we fix $\alpha$ such that $\{i,i+1\}\subset\alpha\subset[n+2]$, and we need to check that $h_{j+1}h_i(x)|_{\alpha}\in C$. If $\{j+1,j+2\}\subset\alpha$ this follows from property (iii) of $h_{j+1}$. If $j+2\notin\alpha$ then $h_{j+1}h_i(x)|_{\alpha}= h_i(x)|_{\upsilon_{j+2}\alpha}\in C$ by property (iii) of $h_i$. 
If $j+2\in \alpha$ and $j+1\notin\alpha$, then $h_{j+1}h_i(x)|_{\alpha}=
p_{j+1}(h_i(x))|_{\upsilon_{j+1}\alpha}$.
We have that $h_i(x))|_{\upsilon_{j+1}\alpha}\in C$ because $\{i,i+1\}\subset\upsilon_{j+1}\alpha$, that $h_i(x))|_{\upsilon_{j+1}\alpha(0)}=x|_{\alpha(0)}\in S$ because $\alpha(0)\leq i$, and that
$h_i(x))|_{\upsilon_{j+1}\alpha\sigma_r}\in C$ for every $r$ because either $\{i,i+1\}\subset \upsilon_{j+1}\alpha\sigma_r$ or $\upsilon_{j+1}\alpha\sigma_r\subset \sigma_i$. Then
$p_{j+1}(h_i(x))|_{\upsilon_{j+1}\alpha}\in C$
by the part 2) of Lemma \ref{lemma:push-forward}. This completes the proof of the identity of the transport simplices.

If $j>i+1$, using Remark \ref{rmk:push-forward-faces} and the above identity, we have $p_ip_j(x)=d_ih_id_jh_j(x)=d_id_{j+1}h_ih_j(x)=d_jd_ih_{j+1}h_i(x)=d_jh_jd_ih_i(x)=p_jp_i(x)$. 


If $j=i+1$, by similar computations, we have
$d_ih_id_{i+1}h_{i+1}d_ih_i(x)=d_id_{i+1}h_{i+1}d_{i+2}h_ih_{i+1}(x)$. Then, writing $x'=h_{i+1}(x)$, we can show that $d_{i+1}h_{i+1}d_{i+2}h_i(x')$ is the $i$-th transport of $d_{i+1}(x')$, by checking (i), (ii) and (iii) in Lemma \ref{lemma:push-forward}, using the same arguments as before. Finally,
$p_ip_{i+1}p_i(x)=d_ih_id_{i+1}h_{i+1}d_ih_i(x)=d_ih_id_{i+1}h_{i+1}(x)=p_{i}p_{i+1}(x)$.

The case $j=i$ follows from part 1) of Lemma \ref{lemma:push-forward}, for $p_i(x)$ sits over $d_iu_{i+1}(y)=u_id_i(y)$.
\end{proof}

\begin{remark}
By iterating push-forward operations, we can send a simplex $x\in X_n$ over $y\in Y_n$ to a vertical simplex in the fiber $q^{-1}(y_n)$. But the push-forward of simplices is not reversible in general, namely $x\mapsto (q(x),p_i(x))$ may not be injective. For instance, in the example in Remark \ref{rmk:tcp}, we have $p_0(0\from 1)=(0\from 0)=p_0(0\from 2)$. Nevertheless, the push-forward is reversible if $C$ satisfies that $d^q_{n,n}:C_n\to X_{n,n}\times_{Y_{n,n}}Y_n$ is also bijective. We call such a cleavage {\bf strict}. 
 Fibrations with a strict cleavage seem to be related to twisted cartesian products \cite{may}.
Our cleavages are not strict in general, so we plan to explore this relation elsewhere.
\end{remark}

\begin{remark}
Our cleavages can be seen as a relative version of Nikolaus's algebraic Kan complexes \cite{nikolaus} and Dakin's $T$-complexes \cite{dakin}. An {\em algebraic Kan complex} is a simplicial set $X$ with choices of horn-fillings $i_{n,k}:X_{n,k} \to X_n$. This is a {\em $T$-complex} if (a) $i_{n,k}(X_{n,k})=C_n$ does not depend on $k$, (b) $C_n$ contains the degeneracies, and (c) if $C_{n-1}$ contains all but one faces of $x\in C_n$, then it contains the other. If $X$ is a $T$-complex, its thin simplices define a normal, flat, and strict cleavage for the projection $X \to \ast$. The converse does not hold, for our flatness axiom involves faces of several dimensions, making it weaker than (c).
\end{remark}


\section{Revisiting the Dold-Kan correspondence}

We review the classical Dold-Kan correspondence, present and discuss a new formula for the inverse functor, and remark the relations with higher groupoids and cleavages.

\

\def\C{{\mathcal C}}

{\bf Simplicial objects} in a category $\C$ are contravariant functors $X:\Delta^{\rm op}\to\C$, they form a category $s\C$ with arrows the natural transformations, and they can be described as graded objects with faces and degeneracy operators exactly as in the case $\C=Sets$. The Dold-Kan correspondence \cite{dold,kan} shows that simplicial objects in an abelian category $\mathcal A$ are essentially the same as chain complexes in nonnegative degrees. We will focus on $\mathcal A=Ab$ the category of abelian groups. The adaptations to the general case are straightforward.

Given $X\in sAb$ a simplicial abelian group, its {\bf normalization} $(NX,\partial)\in Ch_{\geq0}(Ab)$ is the chain complex with $NX_n=\ker (d_{n,0}:X_n\to X_{n,0})=\bigcap_{i>0}\ker(d_i:X_n\to X_{n-1})$ and  $\partial=d_0$.
It turns out that $X_n=NX_n\oplus D_n$, where $D_n$ is the subgroup spanned by the degenerate simplices, that the inclusion yields a quasi-isomorphism  $(NX,d_0)\xto\sim(X,\sum_i(-1)^id_i)$, and an isomorphism $(NX,d_0)\xto\cong (X/D,\sum_i(-1)^id_i)$, see e.g. \cite[\S III.2]{gj}. 




\begin{proposition}[Dold-Kan correspondence]\label{prop:dk-correspondence}
The normalization $N:sAb\to Ch_{\geq 0}(Ab)$ is an equivalence of categories between simplicial abelian groups and chain complexes.
\end{proposition}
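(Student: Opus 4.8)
The plan is to prove Dold--Kan by exhibiting an explicit inverse functor $\Gamma: \Ch_{\geq0}(Ab)\to sAb$ and checking that $N$ and $\Gamma$ are mutually inverse up to natural isomorphism. First I would recall the construction of $\Gamma$: given a chain complex $(A_\bullet,\partial)$, set
$$\Gamma(A)_n=\bigoplus_{[n]\onto[k]} A_k,$$
the sum running over all surjective order maps $\eta:[n]\onto[k]$ in $\Delta$. For a general $\theta:[m]\to[n]$ one defines $\theta^*:\Gamma(A)_n\to\Gamma(A)_m$ on the summand indexed by $\eta:[n]\onto[k]$ by taking the epi--mono factorization $\eta\theta = \varepsilon\circ\eta'$ with $\eta':[m]\onto[\ell]$ and $\varepsilon:[\ell]\into[k]$, and then letting the component land in the summand $\eta'$ via the map $A_k\to A_\ell$ which is $\id$ if $\varepsilon=\id$, is $\partial$ if $\varepsilon=\delta_\ell$ (the face missing the top vertex, so $\ell=k-1$), and is $0$ otherwise. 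One checks the simplicial identities for $\Gamma(A)$ — this is the routine functoriality/bookkeeping part, using that $\varepsilon^2\neq\id$ forces the only surviving compositions to involve at most one $\partial$ and that $\partial^2=0$ handles the remaining case.

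Next I would verify $N\Gamma(A)\cong A$. By definition $N\Gamma(A)_n$ is the intersection of the kernels of $d_i$ for $i>0$; I would show directly that this picks out exactly the summand indexed by the identity $\id_{[n]}$, i.e. $N\Gamma(A)_n\cong A_n$, and that under this identification the differential $d_0$ becomes $\partial$. The key computation is that for the identity summand, $d_i=\delta_i^*$ for $0<i\leq n$ sends $A_n$ into a summand indexed by a nontrivial surjection with the map being $0$ (since the mono part of $\delta_i\circ$(nothing) is not $\id$ or the top face), whereas $d_0$ picks up exactly one $\partial$. Conversely, I would prove $\Gamma N(X)\cong X$ using the decomposition $X_n = NX_n\oplus D_n$ recalled in the excerpt, together with the classical fact (the "Eilenberg--Zilber lemma" / normal form for degenerate simplices) that every $x\in X_n$ is uniquely $\eta^*(y)$ with $\eta:[n]\onto[k]$ surjective and $y\in NX_k$; this gives a natural isomorphism $\bigoplus_{[n]\onto[k]} NX_k \xrightarrow{\sim} X_n$, and one checks it is compatible with all the simplicial structure maps, matching the face/degeneracy formulas of $\Gamma$ with those of $X$.

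Finally I would note that $N$ and $\Gamma$ are obviously functorial and that the isomorphisms above are natural in $A$ and $X$ respectively, concluding that $N$ is an equivalence with quasi-inverse $\Gamma$. The main obstacle, and the step requiring genuine care rather than formal nonsense, is the uniqueness half of the normal form for degenerate simplices (that each $x\in X_n$ is uniquely a degeneracy of a nondegenerate-in-the-normalized-sense simplex), since this is what makes the comparison map $\Gamma N(X)\to X$ an isomorphism rather than merely an epimorphism; in the abelian setting this follows cleanly from the splitting $X=NX\oplus D$ iterated over degeneracies, but getting the indexing by surjections $[n]\onto[k]$ to match exactly demands a careful induction. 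I would present that argument in full and treat the simplicial-identity verifications for $\Gamma$ as routine, possibly relegating them to a reference such as \cite[\S III.2]{gj}.
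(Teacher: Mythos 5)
Your route is genuinely different from the paper's. The paper does not reprove the classical correspondence from scratch: it invokes the original argument of Dold and Kan, records the inverse as the Yoneda extension $N^{-1}(Y)_n=\hom(NF\Delta[n],Y)$, and then spends its effort on Proposition \ref{prop:inverse-dk}, a new formula shown to be isomorphic to that one. The sum-over-surjections functor $\Gamma(A)_n=\bigoplus_{[n]\onto[k]}A_k$ you use is exactly what the paper calls the ``popular formula'' in Remark \ref{rmk:dk-literature} and deliberately sets aside. Your outline is the standard textbook proof (as in \cite[\S III.2]{gj}), and it is a legitimate, self-contained alternative: it buys an explicit inverse together with the normal-form decomposition $X_n\cong\bigoplus_{\eta:[n]\onto[k]}NX_k$, whose uniqueness half you correctly single out as the only non-formal step; what it does not buy is the geometric labeling picture $\hom(NF\Delta[n],Y)$ that the paper needs for its relative, semi-direct product generalization.

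There is, however, a concrete slip that breaks your key computation as written: you attach $\partial$ to the mono $\varepsilon$ missing the top vertex (which, in the paper's notation, is $\delta_k:[k-1]\into[k]$, not $\delta_\ell$), but you then verify $N\Gamma(A)\cong A$ against the paper's normalization $NX_n=\bigcap_{i>0}\ker d_i$ with differential $d_0$. Under your convention it is $d_n$, not $d_0$, that restricts to $\partial$ on the identity summand (the mono part of $\id\circ\delta_n$ \emph{is} the top face), while $d_0$ restricts to $0$ there; already for $n=1$ one finds $\ker d_1=\{(a,-\partial a):a\in A_1\}$, which is not the identity summand, and $d_0$ acts on it by $-\partial$. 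So the claim that $N\Gamma(A)_n$ is exactly the summand indexed by $\id_{[n]}$ with $d_0=\partial$ is false as stated, and the identification of $N\Gamma(A)$ with $A$ would need a messier graph-type isomorphism and signs. The fix is essentially one word: assign $\partial$ to $\varepsilon=\delta_0$ (the face missing the bottom vertex), which is what the paper's own $DK$ formula does and what matches its normalization; alternatively keep the top-face convention and normalize by $\bigcap_{i<n}\ker d_i$ with differential $\pm d_n$. With the conventions aligned, the rest of your argument, including $\Gamma N(X)\cong X$ via the iterated splitting $X=NX\oplus D$, goes through as in the references.
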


The original proof \cite{dold,kan} provides an explicit formula for the inverse, namely
$$N^{-1}:Ch_{\geq 0}(Ab)\to sAb \qquad
N^{-1}(Y)_n=\hom(NF\Delta[n],Y)$$
where $F\Delta[n]\in sAb$ is freely spanned by $\Delta[n]$. This is an instance of the so-called {\it Yoneda extension}: a functor $\varphi:C\to D$ from a small category $C$ to a co-continuous category $D$ produces an adjunction $\phi:Sets^{C^{\rm op}}\rightleftarrows D:\psi$ between  $\phi(X)={\rm colim}_{h^c\to X}\varphi(c)$ and $\psi(d)_c=\hom(\varphi(c),d)$. 

\begin{remark}\label{rmk:labeling}
A chain map $x: NF\Delta[n]\to Y$ can be regarded as a labeling of the $n$-simplex with elements of $Y$. 
Write $\bar\alpha:F\Delta[k]\to F\Delta[n]$ for the morphism induced by $[k]\xto\alpha[n]$, so $\bar\alpha(\iota_k)=\alpha\in\Delta[n]_k$.
Then the abelian group $NF\Delta[n]_k=F\Delta[n]/D_k$ is free and spanned by the injective maps $[k]\xto\alpha[n]$, and the differential $\partial$ is given by $\partial(\alpha)=\sum_{i=0}^k (-1)^i\alpha\delta_i$, where $\delta_i$ are the elementary injections, see Remark \ref{rmk:simplicial-identities}. 
To give a chain map $x:NF\Delta[n]\to Y$ is the same as to label each sub-simplex $[k]\xto\alpha[n]$ of $\Delta[n]$ with an element $\pi_\alpha(x)=x(\alpha)=x\bar\alpha\in Y_k$, in a way such that $\partial(\pi_\alpha(x))=
\sum_{i=0}^k(-1)^i \pi_{\alpha\delta_i}x$.
\end{remark}

%


A more concise inverse to $N$ has been popularized in the literature. We will discuss it in Remark \ref{rmk:dk-literature}.
We now present new formulas that are alternative to those and that catch the geometry of horn-fillings. They play a key role in developing the relative version. The idea is to reconstruct $\Delta[n]$ out of its sub-simplices $\alpha$ containing 0 via a sequence of horn-fillings:
$$\begin{tikzpicture}[scale=.6]
\draw[dashed](0,0) -- (2,0) node[below right] {\footnotesize $0$}-- (1,1.5) node[above] {\footnotesize $1$} -- (0,0) node[below left] {\footnotesize $2$};
\draw[fill] (2,0) circle (3pt);
\end{tikzpicture}\qquad
\begin{tikzpicture}[scale=.6]
\draw[dashed](0,0) -- (2,0) node[below right] {\footnotesize $0$}-- (1,1.5) node[above] {\footnotesize $1$} -- (0,0) node[below left] {\footnotesize $2$};
\draw[fill] (2,0) circle (3pt);
\draw[thick](2,0) -- (1,1.5);
\end{tikzpicture}\qquad
\begin{tikzpicture}[scale=.6]
\draw[dashed](0,0) -- (2,0) node[below right] {\footnotesize $0$}-- (1,1.5) node[above] {\footnotesize $1$} -- (0,0) node[below left] {\footnotesize $2$};
\draw[fill] (2,0) circle (3pt);
\draw[thick](2,0) -- (1,1.5);
\draw[thick](2,0) -- (0,0);
\end{tikzpicture}\qquad
\begin{tikzpicture}[scale=.6]
\draw[fill](0,0) -- (2,0) node[below right] {\footnotesize $0$}-- (1,1.5) node[above] {\footnotesize $1$} -- (0,0) node[below left] {\footnotesize $2$};
\draw[fill] (2,0) circle (3pt);
\end{tikzpicture}$$

Given $(Y,\partial)\in Ch_{\geq0}(Ab)$, we define the {\bf $n$-simplex abelian group} by $DK(Y)_n$ by
$$DK(Y)_n=\bigoplus_{\substack{[k]\xto\alpha[n]\\ \alpha(0)=0}}Y_{k}$$
where the sum is over the injective maps preserving 0. 
We write $\pi_\beta:DK(Y)_n\to Y_l$ for the projection indexed by $[l]\xto\beta[n]$, and we write $(y,\alpha)$ for the homogeneous element set by $[k]\xto\alpha[n]$ and $y\in Y_k$. Given $[m]\xto\theta[n]$, define $[m+1]\xto{\theta'}[n+1]$ by $\theta'(0)=0$ and $\theta'(i+1)=\theta(i)+1$. 



\begin{proposition}\label{prop:inverse-dk}
The inverse of the normalization $N^{-1}$ is naturally isomorphic to the functor $DK:Ch_{\geq0}(Ab)\to sAb$, where $DK(Y)_n$ is as above, and faces and degeneracies are as follows:
\begin{itemize}
    \item $\pi_\beta u_j=\pi_{\upsilon_j\beta}$, with the convention that $\pi_\gamma=0$ whenever $\gamma$ is not injective;
    \item $\pi_\beta d_i=\pi_{\delta_i\beta}$ if $i>0$; and 
    \item $\pi_\beta d_0=\partial \pi_{\beta'}-\sum_{i=1}^{l+1} (-1)^i\pi_{\beta'\delta_i}$.
\end{itemize}
\end{proposition}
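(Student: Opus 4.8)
The statement asserts two things: that the assignments $\pi_\beta u_j$, $\pi_\beta d_i$ ($i>0$), $\pi_\beta d_0$ genuinely define a simplicial abelian group structure on $DK(Y)$, and that the resulting functor is naturally isomorphic to $N^{-1}$. My plan is to prove the second assertion first, or rather to extract the structure maps \emph{from} $N^{-1}$ so that the simplicial identities come for free, and then identify them with the stated formulas.

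\textbf{Step 1: identify $DK(Y)_n$ with $N^{-1}(Y)_n = \Hom(NF\Delta[n],Y)$.} By Remark \ref{rmk:labeling}, an element of $\Hom(NF\Delta[n],Y)$ is a compatible labeling $\{\pi_\alpha(x)\}$ of \emph{all} injective $\alpha:[k]\to[n]$ satisfying $\partial\pi_\alpha(x) = \sum_{i=0}^k(-1)^i\pi_{\alpha\delta_i}(x)$. I claim the restriction map sending such a labeling to the tuple $(\pi_\alpha(x))_{\alpha(0)=0}$ is an isomorphism onto $DK(Y)_n = \bigoplus_{\alpha(0)=0} Y_k$. Injectivity and surjectivity both follow by downward induction on the codimension of $\alpha$: if $\alpha(0)=j_0>0$, write $\alpha = \delta_0\cdots\delta_0\,\alpha'$ peeling off the bottom vertices, and use the chain-map relation $\partial\pi_{\tilde\alpha}(x) = \pi_{\tilde\alpha\delta_0}(x) + \sum_{i\ge1}(-1)^i\pi_{\tilde\alpha\delta_i}(x)$ to solve for $\pi_{\tilde\alpha\delta_0}(x)$ in terms of labels whose bottom vertex is strictly smaller, hence (inductively) in terms of those fixed by the $\alpha(0)=0$ data. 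This is exactly the ``reconstruct $\Delta[n]$ by a sequence of horn-fillings'' picture in the paragraph before the proposition: the zeroth face of a simplex containing $0$ is the one horn-filling that is \emph{not} already determined, and the chain condition is what pins it down.

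\textbf{Step 2: transport the simplicial structure and match formulas.} Under the isomorphism of Step 1, $N^{-1}(Y)$ is already a simplicial abelian group (it is a functor $Ch_{\ge0}(Ab)\to sAb$), so I only need to compute what $\theta^* = \Hom(N F\bar\theta, Y)$ does on the $\alpha(0)=0$ coordinates. For $[m]\xto{\theta}[n]$ and $x\in\Hom(NF\Delta[n],Y)$ one has $(\theta^*x)(\beta) = x(\theta\beta)$ for injective $\beta:[l]\to[m]$; so for a degeneracy $\theta=\upsilon_j$, $\pi_\beta u_j = \pi_{\upsilon_j\beta}$, with $\pi_\gamma=0$ when $\gamma$ fails to be injective (since $NF\Delta[n]$ is $F\Delta[n]$ modulo degenerates); for a face $\theta=\delta_i$ with $i>0$, $\delta_i\beta$ preserves $0$ whenever $\beta$ does, so $\pi_\beta d_i = \pi_{\delta_i\beta}$ is already expressed in the $\alpha(0)=0$ data. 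The only subtle case is $d_0 = \delta_0^*$: here $\delta_0\beta$ need \emph{not} preserve $0$, so I must re-express $\pi_{\delta_0\beta}(x)$ using Step 1's inductive formula. Writing $\beta' $ for the ``shift'' of $\beta$ as in the proposition's notation (so that $\delta_0\beta = \beta'\delta_0$ — check: both are the injection $[l]\to[n]$ whose image is $\{0\}\cup(\beta\text{-image}+\dots)$... more precisely $\beta'\delta_0 = \delta_0\beta$ by direct computation on vertices), the chain-map relation at the simplex $\beta':[l+1]\to[n]$ (which \emph{does} preserve $0$) reads $\partial\pi_{\beta'}(x) = \sum_{i=0}^{l+1}(-1)^i\pi_{\beta'\delta_i}(x) = \pi_{\beta'\delta_0}(x) + \sum_{i=1}^{l+1}(-1)^i\pi_{\beta'\delta_i}(x)$, hence $\pi_{\delta_0\beta}(x) = \pi_{\beta'\delta_0}(x) = \partial\pi_{\beta'}(x) - \sum_{i=1}^{l+1}(-1)^i\pi_{\beta'\delta_i}(x)$, which is precisely the stated formula for $\pi_\beta d_0$.

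\textbf{Step 3: functoriality and naturality.} Since the structure maps have been obtained by transport of structure along a (visibly natural in $Y$) isomorphism, the simplicial identities and functoriality in $Y$ hold automatically, and $DK\cong N^{-1}$ naturally. The main obstacle I anticipate is purely bookkeeping: verifying the index identities $\delta_0\beta=\beta'\delta_0$, $\beta'\delta_i = (\delta_{i-1}\beta)'$ for $i\ge1$, $\upsilon_j\beta$ vs.\ $\beta'$ interactions, and making sure the sign conventions and the ordering convention $[n]=\{n,\dots,1,0\}$ are handled consistently throughout — none of it is deep, but it is the kind of computation where an off-by-one in the vertex labelling silently breaks everything. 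If one preferred a direct verification instead, one could alternatively just check the simplicial identities by hand from the three bullet formulas, the only nontrivial ones being those involving $d_0$ twice or $d_0$ against $d_i$/$u_j$, each reducing to the relation $\partial^2=0$ together with the combinatorics of $\beta\mapsto\beta'$; but routing through $N^{-1}$ is cleaner and also yields the naturality claim with no extra work.
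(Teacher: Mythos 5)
Your proposal is correct and takes essentially the same route as the paper's own proof: both identify $DK(Y)_n$ with $\hom(NF\Delta[n],Y)$ by showing that the labels $\pi_\alpha$ with $\alpha$ containing $0$ determine, via the chain relation $\partial\pi_\alpha=\sum_i(-1)^i\pi_{\alpha\delta_i}$, all remaining labels (and conversely extend to a chain map), and then transport the simplicial structure, deriving the $d_0$ formula from that relation applied at $\beta'$ together with $\delta_0\beta=\beta'\delta_0$. Your vertex-peeling induction is only a mild elaboration of the paper's one-step argument with $\alpha=\beta\cup 0$.
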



\begin{proof}
For fixed $n$, let us show that the map $N^{-1}(Y)_n\to \bigoplus_{[k]\xto\alpha[n]}Y_k\to DK(Y)_n$ given by composing the inclusion with the projection is an isomorphism.
If $\beta:[l]\to[n]$ does not contain $0$, taking $\alpha=\beta\cup 0$, the description in Remark \ref{rmk:labeling} implies that $\pi_\beta x=\partial \pi_{\upsilon_0\beta'}x-\sum_{i=1}^{l+1}(-1)^i \pi_{\upsilon_0\beta'\delta_i}x$, so the indices containing 0 determine the others, and the map is injective.
Conversely, starting with an arbitrary family $\pi_\alpha(x)\in Y_k$, $[k]\xto\alpha[n]$, $0\in\alpha$, and defining $\pi_\beta x$ with the above equation whenever $0\notin\beta$, it is easy to check that the equation remains true for every $\alpha$, and we get a well-defined chain map $x:NF\Delta[n]\to Y$, proving that the map is surjective.

Regarding the faces, if $i>0$ then $\pi_\beta d_i(x)=x\bar\delta_i\bar\beta=\pi_{\delta_i\beta}x$. 
Regarding the degeneracies, 
$\pi_\beta u_jx=x\bar\upsilon_j\bar\beta=\pi_{\upsilon_j\beta}x$ if $\upsilon_j\beta$ is injective and 0 otherwise, for $x$ must vanish over the degenerate simplices. The formula for $d_0$ follows from
$\partial\pi_{\alpha}=\sum_{i=0}^{k}(-1)^i\pi_{\alpha\delta_i}$ applied to $\alpha=\beta'$.
\end{proof}


\begin{remark}\label{rmk:homog-formulas}
The faces and the degeneracies on the homogeneous elements are given by:
$$
\pi_\beta u_j(y,\alpha)=\begin{cases} 
y & \alpha=\upsilon_j\beta \\
0 & \text{otherwise}
\end{cases}
\qquad
\pi_\beta d_i(y,\alpha)=\begin{cases}
  y & \alpha=\delta_i\beta  \\ 
0 & \text{otherwise}\end{cases}\ (i\neq0)
$$
Regarding $d_0$, there are two cases where the contributions are non-trivial, which we visualize using black dots for $\alpha$ and gray dots for $\delta_0\beta$:
\begin{enumerate}[{Case} I)]
\item If $\alpha=\beta'$  
then $\pi_\beta d_0(y,\alpha)=\partial (y)$:
$$\begin{tikzpicture}
\draw[thick](6,0)node[below] {$0$} --(0,0) node[below] {$n$};
\draw[fill] (6,0.2) circle (1.8pt);
\draw (6.4,0.2) node {\footnotesize $\alpha$};
\draw[fill] (5.5,0.2) circle (1.8pt);
\draw[fill] (4.5,0.2) circle (1.8pt);
\draw[fill] (4,0.2) circle (1.8pt);
\draw[fill] (3,0.2) circle (1.8pt);
\draw[fill] (2,0.2) circle (1.8pt); 
\draw (6,0.45) circle (1.8pt);
\draw (6.4,0.45) node {\footnotesize $\beta'$};
\draw[fill,gray] (5.5,0.45) circle (1.8pt);
\draw[fill,gray] (4.5,0.45) circle (1.8pt);
\draw[fill,gray] (4,0.45) circle (1.8pt);
\draw[fill,gray] (3,0.45) circle (1.8pt);
\draw[fill,gray] (2,0.45) circle (1.8pt);
\draw (5.5,0) node[below] {$1$};
\draw (2,0) node[below] {\footnotesize $\alpha(k)$};
\end{tikzpicture}$$
\item If $\alpha=\beta'\delta_i$ for some $0< i\leq l+1$ 
then $\pi_\beta d_0(y,\alpha)=(-1)^{i+1}y$,
$$\begin{tikzpicture}
\draw[thick](6,0)node[below] {$0$} --(0,0) node[below] {$n$};
\draw[fill] (6,0.2) circle (1.8pt);
\draw (6.4,0.2) node {\footnotesize $\alpha$};
\draw[fill] (5.5,0.2) circle (1.8pt);
\draw[fill] (4.5,0.2) circle (1.8pt);
\draw[fill] (4,0.2) circle (1.8pt);
\draw[fill] (3,0.2) circle (1.8pt);
\draw[fill] (2,0.2) circle (1.8pt); 
\draw (6,0.45) circle (1.8pt);
\draw (6.4,0.45) node {\footnotesize $\beta'$};
\draw[fill,gray] (5.5,0.45) circle (1.8pt);
\draw[fill,gray] (4.5,0.45) circle (1.8pt);
\draw[fill,gray] (4,0.45) circle (1.8pt);
\draw[fill,gray] (3.5,0.45) circle (1.8pt);
\draw[fill,gray] (3,0.45) circle (1.8pt);
\draw[fill,gray] (2,0.45) circle (1.8pt);
\draw (5.5,0) node[below] {$1$};
\draw (3.5,0) node[below] {\footnotesize $\beta'(i)$};
\draw (2,0) node[below] {\footnotesize $\alpha(k)$};
\end{tikzpicture}$$
\end{enumerate}
\end{remark}


\begin{lemma}\label{lemma:ker d_i}
Let $Y$ be a chain complex, and let $x\in 
DK(Y)_n=\bigoplus_{\substack{[k]\xto\alpha[n]\\ \alpha(0)=0}}Y_{k}
$. Then:
\begin{enumerate}[i)]
    \item $x\in\ker d_i$, $i>0$, if and only if $\pi_\alpha x=0$ for all $\alpha$ not containing $i$;
    \item $x\in N(DK(Y))_n=\ker d_{n,0}$ if and only if $\pi_\alpha x=0$ for every $\alpha\neq\iota_n$;
    \item $x\in D_n$, the group spanned by the degenerate simplices, if and only if $\pi_{\iota_n}x=0$.
\end{enumerate}
\end{lemma}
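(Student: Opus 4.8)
The plan is to derive all three statements directly from the explicit face and degeneracy formulas of Proposition~\ref{prop:inverse-dk}, together with the standard decomposition $X_n=NX_n\oplus D_n$ valid for any simplicial abelian group (recalled before Proposition~\ref{prop:dk-correspondence}). For (i), fix $i>0$ and recall $\pi_\beta d_i=\pi_{\delta_i\beta}$, where $\beta$ ranges over the injective $0$-preserving maps $[l]\to[n-1]$. The assignment $\beta\mapsto\delta_i\beta$ is a bijection from these onto the injective $0$-preserving maps $\alpha\colon[l]\to[n]$ with $i\notin\alpha$ — the inverse factors a given such $\alpha$ uniquely through $\delta_i$ — so $d_ix=0$ holds iff $\pi_\alpha x=0$ for every index $\alpha$ of $DK(Y)_n$ not containing $i$. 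For (ii), use $N(DK(Y))_n=\ker d_{n,0}=\bigcap_{i>0}\ker d_i$: by (i) this intersection consists of the $x$ with $\pi_\alpha x=0$ whenever $\alpha$ omits some element of $\{1,\dots,n\}$; since every index also contains $0$, the only surviving index is $\alpha=\iota_n$, giving the claim.

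For (iii), the inclusion $D_n\subseteq\ker\pi_{\iota_n}$ is immediate from the degeneracy formula: for $z\in DK(Y)_{n-1}$ and $0\le j\le n-1$ one has $\pi_{\iota_n}u_j z=\pi_{\upsilon_j\iota_n}z=\pi_{\upsilon_j}z=0$, because $\upsilon_j\colon[n]\to[n-1]$ is not injective; and $D_n$ is spanned by such elements. For the converse I would invoke $DK(Y)_n=N(DK(Y))_n\oplus D_n$. By (ii) we also have $DK(Y)_n=N(DK(Y))_n\oplus\ker\pi_{\iota_n}$: the intersection is $0$, and $x\mapsto(\pi_{\iota_n}x,\iota_n)$ splits off the $N$-part, leaving $x-(\pi_{\iota_n}x,\iota_n)\in\ker\pi_{\iota_n}$. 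So $D_n$ and $\ker\pi_{\iota_n}$ are two complements of $N(DK(Y))_n$ with $D_n\subseteq\ker\pi_{\iota_n}$; writing any $z\in\ker\pi_{\iota_n}$ as $z_N+z_D$ with $z_N\in N(DK(Y))_n$, $z_D\in D_n$, gives $z_N=z-z_D\in N(DK(Y))_n\cap\ker\pi_{\iota_n}=0$, hence $z=z_D\in D_n$ and $D_n=\ker\pi_{\iota_n}$.

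None of these steps is genuinely difficult; (i) and (ii) are bookkeeping with injective order maps, and (iii) rests on the degeneracy formula plus a general fact. The only point requiring a bit of care is the converse in (iii): $D_n$ and $\ker\pi_{\iota_n}$ are \emph{a priori} distinct complements of the same subgroup $N(DK(Y))_n$, so what forces their equality is the containment coming from the degeneracy formula combined with the complement property, not a direct identification. (One could avoid citing $X_n=NX_n\oplus D_n$ by exhibiting, for each $x$ with $\pi_{\iota_n}x=0$, an explicit expression of $x$ as a combination of images of degeneracies, but this is more laborious with no real gain.)
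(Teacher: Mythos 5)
Your argument is correct; parts (i) and (ii) are the same bookkeeping with the face formulas that the paper leaves implicit, and your inclusion $D_n\subseteq\ker\pi_{\iota_n}$ is identical to the paper's. Where you genuinely diverge is the converse of (iii): the paper argues directly on homogeneous elements, showing $(y,\alpha)\in D_n$ for every $\alpha\neq\iota_n$ by induction on $i=\min([n]\setminus\alpha)$ — for $i=1$ one has $(y,\alpha)=u_0(y,\upsilon_0\alpha)$, and for $i>1$ one writes $(y,\alpha)=u_{i-1}(y,\upsilon_i\alpha)-(y,\delta_{i-1}\upsilon_i\alpha)$ and recurses — thereby exhibiting an explicit degenerate expression for every element of $\ker\pi_{\iota_n}$. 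You instead quote the general splitting $X_n=NX_n\oplus D_n$ for simplicial abelian groups (which the paper does state, with a reference, before Proposition~\ref{prop:dk-correspondence}), observe via (ii) that $\ker\pi_{\iota_n}$ is also a complement of $N(DK(Y))_n$, and conclude $D_n=\ker\pi_{\iota_n}$ from the containment between two complements. Both routes are sound and of comparable length; your abstract argument is cleaner and avoids the combinatorial induction, at the cost of importing the normalized/degenerate decomposition as a black box, while the paper's computation stays entirely inside its own formulas for $DK$ and has the side benefit of producing the degeneracy expansion explicitly — exactly the point you flag in your closing parenthesis.
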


\begin{proof}
i) and ii) are immediate consequences of our formulas for the face maps. Let us prove iii). By definition, $\pi_{\iota_n}u_j=\pi_{\upsilon_j\iota_n}=\pi_{\upsilon_j}=0$, for $\upsilon_j$ is not injective. This proves that $D_n\subset\ker\pi_{\iota_n}$. For the other inclusion, given $\alpha\neq\iota_n$, we must show that $(y,\alpha)\in D_n$. Let $i=\min([n]\setminus\alpha)$, so $\alpha=\delta_i\upsilon_i\alpha$. 
If $i=1$ then $(y,\alpha)=u_0(y,\upsilon_0\alpha)\in D_n$. If $i>1$, we can write $(e,\alpha)=u_{i-1}(e,\upsilon_i\alpha)- (e,\delta_{i-1}\upsilon_i\alpha)$, and since $\min([n]\setminus\delta_{i-1}\upsilon_i\alpha)=i-1$, we conclude by an inductive argument.
\end{proof}

\begin{remark}\label{rmk:dk-literature}
The popular formula for the inverse of the normalization goes as follows. Given $Y$ a chain complex,  $DK'(Y)_n=\bigoplus_{[n]\xto\beta[k]} Y_k$, where the sum runs over the surjective maps. Modulo minor variants, faces and degeneracies are given by
$$
\pi_\beta u_j (e,\alpha)=\begin{cases}
             e & \alpha\upsilon_j=\beta \\
             0 & \text{otherwise}
                      \end{cases}
\qquad
\pi_\beta d_i(e,\alpha)=\begin{cases}
e & \alpha\delta_i=\beta\\
\partial(e) & i=n,\ \alpha\delta_n=\delta_k\beta\\ 
0 & \text{otherwise}.
\end{cases}$$
In \cite[\S 22]{may}, this is presented by writing $\alpha$ as composition of elementary surjections; 
in \cite[\S 1.2.3]{lurie2}, this appears with the variation $(d_i)_{\alpha\beta}=(-1)^n(d_{n-i})_{\alpha\beta}$; in \cite[\S 8.4]{weibel}, a more general formula defines $\theta^*:X_n\to X_m$ for arbitrary $\theta:[m]\to[n]$, as well as in \cite[\S III.2]{gj}. 
There is a duality between epimorphisms $[n]\to[k]$ and monomorphisms $[k]\to[n]$ preserving 0, yielding isomorphisms $DK(Y)_n\cong DK'(Y)_n$, but this is not a simplicial map: in $DK'$ the degeneracies preserve homogeneous elements, and this is not the case in $DK$. 
\end{remark}

\begin{remark}\label{rmk:dk-projective}
Let $R$ be a commutative ring, $Mod^R$ the category of $R$-modules, and $Proj^R_{fg}$ the (non-abelian) subcategory of finitely generated projective ones. Given $X\in sMod^R$ a simplicial $R$-module, the formulas for $DK$ reveal that $X\in sProj^R_{fg}$ if and only if $NX\in Ch_{\geq0}(Proj^R_{fg})$.
Then, the Dold-Kan correspondence restricts to an equivalence $sProj^R_{fg}\cong Ch_{\geq0}(Proj^R_{fg})$. If $M$ is a manifold, by the Serre-Swan Theorem \cite[\S 11]{nestruev}, taking global sections is an equivalence between the category $VB(M)$ of vector bundles over $M$ and $Proj^{C^\infty(M)}_{fg}$.
Combining these results, Dold-Kan yields an equivalence $sVB(M)\cong Ch_{\geq0}(VB(M))$. 
\end{remark}

The next proposition relates simplicial abelian groups with higher groupoids and cleavages.
The first part is standard, see \cite[Lemma I.3.4]{gj}. The second one, since the thin simplices in a $T$-complex are a normal flat cleavage, follows from \cite[\S3, Thm 1.3]{ashley}. We give a proof for completeness.

\begin{proposition}\label{lemma:dk-groupoid}
A simplicial abelian group $X\in sAb$ is a higher groupoid, and it is an $N$-groupoid if and only if $NX_n=0$ for $n>N$.
The degenerate simplices span the unique normal cleavage $D_n\subset X_n$ for the projection $X\to\ast$, and $D$ is flat.
\end{proposition}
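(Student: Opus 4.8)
The plan is to extract all the assertions from a single computation --- the analysis of the inclusion of normalized chains $\iota: NF\Lambda^k[n]\hookrightarrow NF\Delta[n]$ --- combined with the formulas of Proposition~\ref{prop:inverse-dk} and Lemma~\ref{lemma:ker d_i}. Using the free--forgetful adjunction between $sSet$ and $sAb$ together with the Dold--Kan equivalence (Proposition~\ref{prop:dk-correspondence}), one has natural identifications $X_n\cong\hom(NF\Delta[n],NX)$ and $X_{n,k}\cong\hom(NF\Lambda^k[n],NX)$ under which the horn map $d_{n,k}: X_n\to X_{n,k}$ is precomposition with $\iota$. By Remark~\ref{rmk:labeling}, $NF\Delta[n]$ is free on the injective maps $[j]\to[n]$, while $NF\Lambda^k[n]$ is spanned by those whose image omits some index different from $k$; the only generators \emph{not} of this form are $\iota_n$ in degree $n$ and the face $\delta_k$ in degree $n-1$, and modulo the subcomplex one has $\partial\iota_n\equiv(-1)^k\delta_k$ and $\partial\delta_k\equiv 0$. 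Hence the cokernel of $\iota$ is the contractible two-term complex $\Z\xto{(-1)^k}\Z$ in degrees $n$ and $n-1$, so $\iota$ is a split monomorphism (a section of the quotient lifts the top generator and uses $(-1)^k$ times its boundary for the bottom one).

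Applying $\hom(-,NX)$ to the resulting split short exact sequence gives $0\to\hom(\Z\xto{(-1)^k}\Z,NX)\to X_n\xto{d_{n,k}}X_{n,k}\to 0$, and a chain map out of $\Z\xto{(-1)^k}\Z$ is freely determined by the image $y\in NX_n$ of the degree-$n$ generator (the degree-$(n-1)$ value being forced to equal $(-1)^k\partial y$). Thus every horn map is surjective --- so $X$ is a higher groupoid --- and $d_{n,k}$ is injective exactly when $NX_n=0$; consequently $X$ is an $N$-groupoid if and only if $NX_n=0$ for all $n>N$. Tracing the identifications, $\ker d_{n,k}$ is the subgroup $K_{n,k}$ of those $x\in DK(NX)_n$ with $\pi_\alpha x=0$ for all $\alpha\notin\{\iota_n,\delta_k\}$ and $\pi_{\delta_k}x=(-1)^k\partial\pi_{\iota_n}x$; for $k=0$ it is simply $NX_n$ (Lemma~\ref{lemma:ker d_i}(ii)), which is the Dold--Kan splitting $X_n=NX_n\oplus D_n$, where $D_n=\ker\pi_{\iota_n}$ by Lemma~\ref{lemma:ker d_i}(iii).

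Next I would check that $D=\{D_n\}$ is a normal cleavage for the projection $X\to\ast$, for which $d^q_{n,k}$ is just $d_{n,k}$. It is normal by definition, since $D_n$ contains the degenerate simplices by Lemma~\ref{lemma:ker d_i}(iii). For $1\le k<n$ one has $X_n=D_n\oplus K_{n,k}$: if $x\in D_n\cap K_{n,k}$ then $\pi_{\iota_n}x=0$ forces $\pi_{\delta_k}x=(-1)^k\partial(0)=0$, so $x=0$; and for arbitrary $x$ the element $z=(\pi_{\iota_n}x,\iota_n)+((-1)^k\partial\pi_{\iota_n}x,\delta_k)$ lies in $K_{n,k}$ with $\pi_{\iota_n}z=\pi_{\iota_n}x$, so $x-z\in D_n$; together with the case $k=0$ above this shows that $d_{n,k}$ restricts to a bijection $D_n\to X_{n,k}$ for every $k<n$. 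For uniqueness --- which I read as uniqueness among normal cleavages that are sub-abelian-groups, the relevant notion in the vector-bundle setting --- observe that such a cleavage $C$ contains the degenerate simplices, hence contains the subgroup $D_n$ they span; as $d_{n,0}$ restricts to a bijection on each of $C_n$ and $D_n$ and $D_n\subseteq C_n$, we conclude $C_n=D_n$.

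Finally, for flatness it suffices to observe that if $w\in D_n$, so $\pi_{\iota_n}w=0$, and $d_iw\in D_{n-1}$ for all $i>0$, then $\pi_{\delta_i}w=\pi_{\iota_{n-1}}(d_iw)=0$ for $i>0$, whence the formula for $d_0$ in Proposition~\ref{prop:inverse-dk} gives $\pi_{\iota_{n-1}}(d_0w)=\partial\pi_{\iota_n}w-\sum_{i=1}^{n}(-1)^i\pi_{\delta_i}w=0$, that is, $d_0w\in D_{n-1}$; hence $D$ is $n$-flat over $X_0$ for every $n$ (the auxiliary hypotheses $s_kw\in D_k$ appearing in the definition of flatness are not needed here). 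The only genuinely delicate step --- and the one I expect to demand the most care --- is the bookkeeping in the first paragraph: correctly pinning down the two non-horn generators as $\iota_n$ and $\delta_k$, the sign $(-1)^k$, and the fact that $\delta_k$ is a legitimate index of $DK(NX)$ only for $k\ge1$, which is why $k=0$ is handled separately. Everything else is routine manipulation with the explicit formulas above.
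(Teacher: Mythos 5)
Your argument is correct and follows essentially the same route as the paper: both proofs use Dold--Kan to turn horn-filling into the chain-level extension problem along $NF\Lambda^k[n]\hookrightarrow NF\Delta[n]$, split off the complementary two-term complex generated by $\iota_n$ and $\partial\iota_n$ (your cokernel $\Z\xrightarrow{(-1)^k}\Z$ with the explicit section is exactly the paper's summand $F(\iota_n,\partial\iota_n)$), and prove flatness by the same one-line computation with the $d_0$-formula of Proposition~\ref{prop:inverse-dk} and Lemma~\ref{lemma:ker d_i} -- including your accurate observation that the hypotheses $s_kw\in D_k$ are not actually used. The one step where you genuinely deviate is the bijectivity of the horn maps on $D_n$: the paper identifies $D_n\to DK(Y)_{n,k}$ with the horn map of the $(n-1)$-truncation $DK(Y')$ and invokes the first part, whereas you compute $\ker d_{n,k}$ explicitly (supported on the coordinates $\iota_n,\delta_k$ with $\pi_{\delta_k}x=(-1)^k\partial\pi_{\iota_n}x$, the case $k=0$ treated separately) and exhibit $D_n$ as a complement; both are valid, the truncation trick is shorter, your computation is more hands-on and makes the kernels visible. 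Finally, you supply a uniqueness argument which the paper's proof leaves implicit; your reading -- uniqueness among normal cleavages containing the subgroup $D_n$, in particular subgroup (or, in the geometric setting, sub-bundle) cleavages -- is the appropriate one, since for purely set-theoretic normal cleavages one can in general perturb $D_n$ off the degenerate simplices by a function with values in $NX_n$, and your containment-plus-injectivity argument settles uniqueness cleanly in the relevant linear sense.
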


\begin{proof}
By Dold-Kan, the problem of extending a horn $x:\Lambda^{k}[n]\to X$ to the whole simplex 
is equivalent to extend 
$x^\#:NF\Lambda^{k}[n] \to NX$ to $NF\Delta[n]$. 
But $NF\Delta[n]=NF(\Lambda^{k}[n])\oplus F(\iota_n,\partial \iota_n)$ as chain complexes.  
Thus, to extend $x^\#$ is the same as to give a chain map $F(\iota_n,\partial\iota_n)\to Y$. Such a map is determined by picking $\phi(\iota_n)\in Y_n$, and the choice is unique if and only if $Y_n=NX_n=0$.

Write $Y'$ for the $(n-1)$-truncation of $Y$, so $Y'_k=Y_k$ if $k<n$ and $Y'_k=0$ otherwise. It follows from Lemma \ref{lemma:ker d_i}
that $D_n\to DK(Y)_{n,k}$ identifies with $DK(Y')_{n}\to DK(Y')_{n,k}$, and since $DK(Y')$ is a higher groupoid, this is an isomorphism. This proves that $D$ is indeed a normal cleavage. To show that $D$ is flat, let $w\in D_{n+1}$ such that $s_kw\in D_k$ for all $k>0$ and $d_iw\in C_n$ for all $i>0$. Then
$\pi_{\sigma_k}w=0$ for all $k>0$ and $\pi_{\delta_i}w=0$ for all $i>0$. 
It follows that 
$\pi_{\iota_n}d_0w=\partial \pi_{\iota_{n+1}}w - \sum_{0<i\leq l+1}(-1)^i\pi_{\delta_i} w = 0$
and therefore $d_0w\in D_n$.
\end{proof}





\section{Higher Lie groupoids and their representations}
\label{sec:representations}

We give here a quick overview of Lie groupoids and higher Lie groupoids, and of representations and representations up to homotopy. We pay special attention to the 2-term case and its correspondence with VB-groupoids, which was our motivation for the Main Theorem \ref{thm:main-intro}.

\


A {\bf Lie groupoid} $G=(G_1\toto G_0)$ is a groupoid where both objects and arrows form smooth manifolds, the source and the target are submersions $s,t: G_1\to G_0$, and the multiplication, unit and inverse are smooth maps $m: G_1\times_{G_0}G_1\to G_1$, 
$u: G_0\to G_1$, 
$i: G_1\to G_1$. 
We regard $G_0$ as an embedded sub-manifold of $G_1$ via $u$. We refer to \cite{mm,mk,dh,bdh} for more details, constructions, and examples.


\def\U{{\mathcal U}}

\begin{example}\

\begin{enumerate}[a)]
    \item If $M$ is a manifold then its {\bf unit groupoid} $M\toto M$ has only identity arrows. If $K$ is a {\bf Lie group} then it can be regarded as a Lie groupoid with only one object $K\toto\ast$. Generalizing these two examples, a Lie group action $\rho: K\action M$ gives rise to a {\bf translation groupoid} $Tr(R)=(K\times M\toto M)$ where $s$ is the projection and $t$ is the action.  
    \item A submersion $q: M\to N$ yields a {\bf submersion groupoid} $M\times_N M\toto M$ with one arrow between two points if they are in the same fiber. When $N=\ast$ this is the {\bf pair groupoid} $M\times M\toto M$. The {\bf Cech groupoid} $\coprod U_{j}\cap U_i\toto\coprod U_i$ of an open cover $\{U_i\}_{i\in I}$ is the submersion groupoid of $\coprod U_i\to M$. The {\bf holonomy groupoid} $Hol(F) \rightrightarrows M$ of a foliation $F \subset TM$ is the colimit of the submersion groupoids of the foliated charts, modulo paths with trivial holonomy.
    %
\end{enumerate}    
\end{example}


The nerve $(G_n,d_i,u_j)$ of a Lie groupoid $G$ is a simplicial manifold, for
$G_n$ is a fibered product $G_{n-1}\times_{G_0} G_1$ along a submersion, and it satisfies a smooth version of Definition \ref{def:hg}. 

\begin{definition}
A simplicial manifold $G$ is a {\bf higher Lie groupoid} if for every $k,n$ the horn space $G_{n,k}\subset\prod_{i\neq k}G_{n-1}$ is an embedded submanifold and
the horn map $d_{n,k}: G_n\to G_{n,k}$ is a surjective submersion.
\end{definition}

There is some redundancy in this definition. 
If the horn maps up to dimension $n-1$ are surjective submersions, then $G_{n,k}$ is smooth. We refer to \cite{h,zhu,bg,dhos} for more details, examples, and further discussions on higher Lie groupoids.

\begin{remark}\label{rmk:1-groupoid}
One can check that the nerve functor gives an equivalence between Lie groupoids and higher Lie groupoids of order 1 \cite[Prop 2.4]{wolfson}. 
From now on, we will identify a Lie groupoid with its nerve. Our examples and motivations come from the theory of Lie groupoids, but most of our results are valid for higher Lie groupoids, so we will work in this generality.
\end{remark}


Let $G$ be a higher Lie groupoid.
Write $\sigma_k,\tau_k:[k]\to[n]$ for the inclusions given by $\sigma_k(i)=i$ and $\tau_k(i)=i+n-k$, and write $s_k=\sigma_k^*$, $t_k=\tau_k^*$ for the induced maps $G_n\to G_k$. Thus, if $g\in G_n$ is an $n$-simplex, then $s_k(g)$ is the front dimension $k$ face and $t_k(g)$ is the back dimension $k$ face.
The smooth functions over $G$ define a differential graded algebra $C(G)=\bigoplus_p C^\infty(G_p)$, with differential $\partial=\sum_{i}(-1)^id_i^*: C^\infty(G_p)\to C^\infty(G_{p+1}) $, 
and cup product
$$(f_1\cup f_2)(g)=f_1(t_qg)f_2(s_pg)\qquad g\in G_{p+q},\ f_1\in C^q(G),\ f_2\in C^p(G)$$


Given $G$ a higher Lie groupoid and $E\to G_0$ a vector bundle,
a {\bf representation} $R:G\action E$ is a section $R\in\hom (G_1,\hom(s^*E,t^*E))$ associating to each arrow $y\xfrom g x$ in $G$ a linear isomorphism $R^g:E^x\to E^y$ so that $R^{u(x)}=\id_{E^x}$ for every $x\in G_0$ and $R^{d_1g}=R^{d_0g}R^{d_2g}$ for every $g\in G_2$. 
If $G$ is a Lie groupoid and $g=(k,h)\in G_2$ is a pair of composable arrows, the last equation reads $R^{kh}=R^kR^h$.

\begin{example}\

\begin{enumerate}[a)]
    \item Representations of a unit groupoid are just vector bundles. Representations of a Lie group viewed as a Lie groupoid are the usual ones. Representations of a translation groupoid $Tr(R)$ arising from a group action $K\action M$ are the same as $K$-equivariant vector bundles over $M$.
    \item A representation of a submersion groupoid is a {\it descent datum}, see \cite{sga1}. In particular, a representation of a pair groupoid is a trivialization, and a representation of a Cech groupoid on a trivial vector bundle is a $GL_n$-cocycle. A representation of a holonomy groupoid $Hol(F)\toto M$ is the same as a foliated vector bundle.
\end{enumerate}    
\end{example}


Next, we present alternative approaches to representations. Given $G$ a higher Lie groupoid and $E\to G_0$ a vector bundle, an {\bf $E$-valued cochain} is a section 
$c\in C^p(G,E)=\Gamma(G_p,t_0^*E)$, and it is {\bf normalized} if it vanishes over the degenerate simplices. The space $C(G,E)^\bullet=\bigoplus_p C^p(G,E)$ has a canonical $C(G)$-module structure, given by 
$$(c\cdot f)(g)=c(t_qg)f(s_pg)\qquad g\in G_{p+q},\ c\in C^q(G,E),\ f\in C^p(G).$$
A higher Lie groupoid morphism $q:X\to Y$ is a {\bf fibration} if for every $k,n$ the relative horn space $X_{n,k}\times_{Y_{n,k}} Y_n\to \prod_{i\neq k}X_{n-1}\times Y_n$ is an embedded submanifold and the relative horn map $d^q_{n,k}: X_n\to X_{n,k}\times_{Y_{n,k}} Y_n$ is a surjective submersion. We say that $q$ has order $N$ if $d^q_{n,k}$ is injective for $n>N$. 


\begin{proposition}\label{prop:representation} 
A representation $R: G\action E$ is equivalent to any of the following:
\begin{enumerate}[i)]
    \item A degree 1 differential $D$ in $C(G;E)$ preserving the normalized cochains and satisfying the Leibniz rule $D(c\cdot f)=D(c)\cdot f+(-1)^{q}c\cdot\partial f$, where $c\in C^q(G,E)$ and $f\in C^p(G)$;
    \item A simplicial vector bundle $q:X\to G$ that is a fibration of order 0 and such that $X_0=E$.
\end{enumerate}
\end{proposition}

The complex $(C(G,E),D)$ are the cochains of $G$ with values in $E$, and its {\bf cohomology} is $H(G,E)$. 
The (higher) Lie groupoid $X$ appearing in ii) is often called {\bf translation groupoid} of $R$ and denoted $Tr(R)$.

\begin{proof}
The first equivalence is a particular case of Proposition \ref{prop:ruth}. The relation between $R$ and $D$ is given by the equation 
$$R^gc(x)=D(c)(g)+c(y) \qquad y\xfrom g x\in G_1, \ c\in C^0(G,E)=\Gamma(E).$$
Given $R$, one can define $X_n=G_n\times_{G_0}E$, where the pullback is through $s_0:G_n\to G_0$, and if $\theta:[m]\to[n]$, then $\theta^*:X_n\to X_m$ is given by $\theta^*(g,v)=(\theta^*(g),R^{g|_{\theta(0)\from 0}}(v))$. Conversely, if $q:X\to G$ is a simplicial vector bundle and an order 0 fibration then the horn maps $d_{n,k}:X_n\to X_{n,k}$ are fiberwise isomorphisms, and we can define
$$R^g(e)= t|_{X^g}\circ s|_{X^g}^{-1}(e,g))\qquad e\in E^x,\ y\xfrom g x\in G_1.$$
These constructions are mutually inverse. For more details when $G$ is a Lie groupoid see \cite[Section 3.3]{dh},\cite[Thm 2.7]{gsm}. The extensions to higher Lie groupoids are straightforward.
\end{proof}




Representing higher Lie groupoids on vector bundles is not quite satisfactory, as discussed in the introduction. 
In \cite{ac} a convenient generalization was introduced, working with (bounded) graded vector bundles.  
Given $E=\bigoplus_n E_n\to M$ a graded vector bundle and $x\in M$, write $E^x=\bigoplus_nE_n^x$ for the fiber over $x$. Given $E,E'\to M$, write  $\Hom^{(k)}(E,E')\to M$ for the graded vector bundle of degree $k$ morphisms, so $\Hom^{(k)}(E,E')^x_n=\{\phi_n:E_n^x\to {E'}_{n+k}^x\}$.

\begin{definition}
Given $G$ a higher Lie groupoid and $E=\bigoplus_{n}E_n$ a graded vector bundle over $G_0$, a {\bf representation up to homotopy} $R:G\action E$ consists of a sequence $(R_m)_{m\geq 0}$
$$R_m\in\Gamma\big(G_m;\Hom^{(m-1)}(s_0^*E,t_0^*E)\big) \qquad R_m^g:E_\bullet^{s_0(g)}\to E_{\bullet+m-1}^{t_0(g)}$$
satisfying the following two conditions:
\begin{enumerate}[RH1)]
    \item $R_1^{u(x)}=\id$ for $x\in G_0$, and $R_m^g=0$ for $g\in G_m$ degenerate, $m>1$; and
    \item $\sum_{k=1}^{m-1}(-1)^k R_{m-1}^{d_kg}=\sum_{r=0}^m(-1)^r R_{m-r}^{t_{m-r}g}\circ 
R_r^{s_{r}g}$ for $g\in G_m$, $m\geq 0$.
\end{enumerate}
We say that $R$ has {\bf order} $N$ if $E_n=0$ for all $n>N$, and $R$ is {\bf strict} if $R_m=0$ for all $m>1$.
\end{definition}

The first operators $R_m$ admit simple interpretations: $R^x_0$ is a chain differential on $E^x_\bullet$ for each $x\in G_0$; $R^g_1:(E^{x_0}_\bullet,R^{x_0}_0)\to(E^{x_1}_\bullet,R^{x_1}_0)$ is a chain map for each $x_1\xfrom g x_0\in G_1$; and $R^{g}_2:R^{d_1g}_1\then R^{d_0g}_1R^{d_2g}_1$ is a chain homotopy for each $g\in G_2$. The higher operators $R_m$ give higher homotopies among the previous $R_k$, $k<m$. 


\begin{definition}
Given $R:G\action E$ and $R':G\action E'$ two representations up to homotopy, a {\bf morphism}
$\psi:R\to R'$ consists of a sequence $(\psi_m)_{m\geq0}$
$$\psi_m\in\Gamma\big(G_m;\Hom^{(m)}(s_0^*E,t_0^*E)\big) \qquad \psi_m^g:E_\bullet^{s_0(g)}\to E_{\bullet+m}^{t_0(g)}$$
satisfying the following two conditions:
\begin{enumerate}
    \item[RH3)] $\psi_m^g=0$ for $g\in G_m$ degenerate, $m>0$; and  
    \item[RH4)] $\sum_{r=0}^m (-1)^{m}{R'}^{t_{m-r}g}_{m-r} \psi^{s_rg}_r+ \sum_{i=1}^{m-1} (-1)^{i} \psi^{d_ig}_{m-1}=
\sum_{r=0}^m(-1)^{r}\psi_{m-r}^{t_{m-r}g}{{R}}^{s_rg}_{r}$ for $g\in G_m$, $m\geq0$.
\end{enumerate}
\end{definition}

The first degree of a morphism $\psi_0:(E,R_0)\to (E',R'_0)$ is a chain map, and the higher $\psi_r$ are homotopies among the various compositions. 
When $\psi_m=0$ for every $m>0$ we say that $\psi$ is a {\bf strict morphism}. When $E=E'$ and $\psi_0=\id$ we say that $\psi$ is a {\bf gauge equivalence}.
We write $Rep^\infty(G)$ for the category of representations up to homotopy, and $Rep_{\geq0}^\infty(G)$, $Rep^N(G)$, $Rep_{\geq0}^N(G)$ for the full subcategories of objects of nonnegative degrees and/or order $N$.


\begin{example}\ \label{ex:ruth}
Let $G$ be a Lie groupoid.
 
\begin{enumerate}[a)]
    \item When $G=M$ is just a manifold, viewed as a unit groupoid, then a representation up to homotopy $M\action E$ is the same as a chain complex of vector bundles. 
    \item When $N=0$, so $E=E_0$ is just a vector bundle, then a representation up to homotopy $R: G\action E$ is the same as a usual representation, as defined before. 
    \item When $N=1$, so $E=E_0\oplus E_1$, 
    a representation up to homotopy $R:G\action E$ gives for each $x\in G_0$ a differential $R^x_0:E^x_1\to E^x_0$, for each $x_1\xfrom g x_0\in G_1$ a chain map $R^g_1:(E^{x_0},R^{x_0}_0)\to(E^{x_1},R^{x_1}_0)$ and for each $(g_2,g_1)\in G_2$ a chain homotopy $R^{h,g}_2:R^{hg}_1\then R^h_1R^g_1$ satisfying
$$-R_2^{g_3,g_2g_1}+R_2^{g_3g_2,g_1}=-R_2^{g_3,g_2}R_1^{g_1}+R_1^{g_3}R_2^{g_2,g_1}\qquad (g_3,g_2,g_1)\in G_3.$$
In \cite{gsm} they use the notations $R_0=\partial$, $R_1=\Delta$ and $R_2=-\Omega$ with a misleading sign.
\end{enumerate}    
\end{example}


If $R: G \action E$ is a representation up to homotopy and $y \xfrom{g} x \in G_1$, then $R^g: (E^x, R_0^x) \to (E^y, R_0^y)$ is a quasi-isomorphism. In fact, if $h \in G_2$ is such that $d_2 h = g$ and $d_1 h = \id_x$, then $R^{d_0 h}: E^y \to E^x$ is a quasi-inverse. But $R^g$ is not invertible in general, see for instance the adjoint representation of the pair groupoid $S^2 \times S^2 \toto S^2$ \cite[Rmk 5.6]{dhs}. 

\begin{lemma}\label{lemma:cycles-borders}
Given $R:G\action E$ a representation up to homotopy and $y\xfrom g x\in G_1$,
then $(E^x,R^x_0)$ and $(E^y,R^y_0)$ are isomorphic chain complexes (even though $R^g$ may not be an isomorphism).
\end{lemma}

\begin{proof}
Denote by $Z^x_n$, $B^x_n$ $H^x_n$ the point-wise cycles, boundaries, and homology. 
We know that $\dim E^x_n=\dim E^y_n$ because $E_n$ is a vector bundle and that $\dim H^x_n=\dim H^y_n$ for $R^g$ is a quasi-isomorphism. Consider the following natural short exact sequences:
$$0\to Z^x_n\to E_n^x\to B_{n-1}^x\to 0
\qquad
0\to B^x_n\to Z^x_n\to H^x_n\to 0$$
If $\dim B^x_{n-1}=\dim B^y_{n-1}$ then $\dim Z^x_n=\dim Z^y_n$ by the first sequence, and then $\dim B^x_n=\dim B^y_n$ by the second sequence.
Since our graded vector bundles are bounded, $B^x_{n}=0= B^y_{n}$ for $n$ small enough. By an inductive argument, 
$\dim B^x_n=\dim B^y_n$ for every $n$. The claim follows by using non-natural splittings $E_n^x\cong H_{n}^x\oplus B_{n}^x\oplus B_{n-1}^x$ under which $R_0^x(a,b,c)=(0,c,0)$.
\end{proof}

It becomes natural to explore alternative approaches to representations up to homotopy, in analogy to Proposition \ref{prop:representation}, and this comprises two problems: the equivalence with the differential graded algebra approach, and the equivalence with the fibration approach.
The first problem was solved in \cite[Prop. 3.2]{ac} for Lie groupoids, and the extension for higher Lie groupoids is straightforward. Given $G$ a higher Lie groupoid and $E= \bigoplus_{n} E_n$ a graded vector bundle over $G_0$, an {\bf $E$-valued cochain} is an element 
$$c\in C^{n}(G,E) =\bigoplus_{i+j=n}C^i(G, E_{-j})$$
where $C^m(G, E_{n})=\Gamma(G_m,t_0^*E_{n})$, and it is {\bf normalized} if it vanishes over degenerate simplices.
The space $C(G, E)=\bigoplus_{n}C^n(G, E)$ has a canonical $C(G)$-module structure, given by 
$$(c\cdot f) (g) = c(t_qg)f(s_pg)\qquad g\in G_{p+q},\ c\in C^q(G, E),\ f\in C^p(G).$$

\begin{proposition}\label{prop:ruth}
A representation up to homotopy $R:G\action E$ is the same as a degree 1 differential $D$ in $C(G;E)$ preserving the normalized cochains and satisfying the Leibniz rule $D(c\cdot f)=(Dc)\cdot f+(-1)^{q}c\cdot(\partial f)$. A morphism is the same as a $C(G)$-module morphism commuting with the differentials.
\end{proposition}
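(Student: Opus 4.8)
The plan is to pin down both sides by their action on generators. As a graded $C(G)$-module, $C(G,E)$ is generated by $\Gamma(E)=\bigoplus_n C^0(G,E_n)=\bigoplus_n\Gamma(E_n)$, the $E_n$-valued $0$-cochains, which sit in total degree $-n$: every homogeneous cochain $c\in C^m(G,E_n)=\Gamma(G_m,t_0^*E_n)$ is a finite $C^\infty(G_m)$-combination of pull-backs $t_0^*e$ with $e\in\Gamma(E_n)$. Hence, by the Leibniz rule, a degree-$1$ operator $D$ satisfying $D(c\cdot f)=(Dc)\cdot f+(-1)^{q}c\cdot\partial f$ is completely determined by its restriction $D\r{\Gamma(E)}$, and likewise a degree-$0$ $C(G)$-module map by its restriction to $\Gamma(E)$. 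Moreover, the compatibility with the $C^\infty(G_0)$-action forced by $\partial$ on functions --- namely $D(fe)=(De)\cdot f+(-1)^{n}\,e\cdot\partial f$ for $f\in C^\infty(G_0)$ and $e\in\Gamma(E_n)$ --- is exactly what will make the operators $R_m$ extracted below tensorial.

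To recover $R$ from $D$, fix $e\in\Gamma(E_n)$ and decompose $D(e)=\sum_{m\ge0}D(e)\r{G_m}$ by simplicial degree, so that $D(e)\r{G_m}\in\Gamma(G_m,t_0^*E_{m+n-1})$. The $m=0$ term is $C^\infty(G_0)$-linear in $e$ (as $\partial f$ has no $G_0$-component), hence a bundle endomorphism of $E$ of degree $-1$: this is $R_0$. For $m\ge2$ the correction $e\cdot\partial f$ lives only on $G_1$, so $(D(e)\r{G_m})(g)$ depends on $e$ only through $e(s(g))$, and linearly, yielding $R_m\in\Gamma(G_m,\Hom^{(m-1)}(s_0^*E,t_0^*E))$. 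For $m=1$ one first subtracts the pull-back term contributed by $\partial$ on $C^\infty(G_0)$ --- concretely $(D(e)\r{G_1})(g)=R_1^g(e(s(g)))-e(t(g))$, up to the fixed overall sign --- and the remainder is pointwise, defining $R_1$. Conversely, from a representation up to homotopy $R=(R_m)_{m\ge0}$ one builds $D=\sum_{m\ge0}\widehat R_m$: here $\widehat R_0$ applies $R_0$ at the target vertex; $\widehat R_1$ is the $R_1$-twisted simplicial coboundary $\sum_i(-1)^id_i^*$, in which the unique face term $d_i^*$ that would land in the wrong fibre is corrected by $R_1$; and $\widehat R_m$ ($m\ge2$) splits a simplex into its first arrows and its last $m$ arrows, applies $R_m$ to the latter, and inserts the Koszul/Alexander--Whitney sign. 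Each $\widehat R_m$ raises total degree by $1$, a direct computation gives the Leibniz rule, and the two assignments are mutually inverse by construction.

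It remains to match the three defining conditions. Preservation of the normalized subcomplex is equivalent to RH1: evaluated on a degenerate simplex $g\in G_m$ with $m\ge2$ it forces $R_m^g=0$, and on a unit $g=u(x)\in G_1$ the formula above gives $R_1^{u(x)}(e(x))-e(x)=0$, i.e.\ $R_1^{u(x)}=\id$ --- exactly the two clauses of RH1, and exactly as in the ordinary-representation case of Proposition~\ref{prop:representation}(i). Since $D^2$ is again a $C(G)$-module endomorphism (the two cross-terms $(Dc)\cdot\partial f$ cancel by the sign rule, and $\partial^2=0$), the condition $D^2=0$ reduces to $D^2=0$ on $\Gamma(E)$; expanding $D^2(e)=0$ and collecting the contributions $\widehat R_{m-r}\widehat R_r$ by simplicial degree reproduces precisely the structure equations RH2. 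For morphisms the same machine applies: a degree-$0$ $C(G)$-module map $\widehat\psi\colon C(G,E)\to C(G,E')$ is determined by $\widehat\psi\r{\Gamma(E)}$, its simplicial components give operators $\psi_m\in\Gamma(G_m,\Hom^{(m)}(s_0^*E,t_0^*E'))$ (RH3 being the requirement that $\widehat\psi$ preserve the normalized cochains), and $\widehat\psi D=D'\widehat\psi$, read off on $\Gamma(E)$ and sorted by simplicial degree, is precisely RH4.

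The genuinely delicate part --- and the only real obstacle --- is the sign bookkeeping: pinning down the Koszul signs hidden in the module product $c\cdot f$, in the coboundary $\partial=\sum_i(-1)^id_i^*$, in the twisting of $\widehat R_1$, and in the Alexander--Whitney splitting of $\widehat R_m$, so that $D^2=0$ and $\widehat\psi D=D'\widehat\psi$ become \emph{literally} the equations RH2 and RH4 with the signs as written, rather than some cohomologous rewriting. This is a finite but fiddly verification; conceptually the whole argument is just ``generators, Leibniz, and bookkeeping'', following \cite[Prop.~3.2]{ac}.
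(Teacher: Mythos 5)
The paper offers no proof of this proposition: it is quoted verbatim from \cite[Prop.~3.2]{ac}, so there is no in-text argument to compare yours against. Your sketch is in fact the standard proof of the cited result, and its skeleton is sound: $C(G,E)$ is generated as a $C(G)$-module by $\Gamma(E)$, the Leibniz rule shows that the simplicial-bidegree components $D_m$ are $C(G)$-linear for $m\neq 1$ (hence tensorial, giving $R_0$ and the $R_m$, $m\geq 2$) and a $\partial$-derivation for $m=1$ (giving $R_1$ after subtracting the pull-back term), $D^2$ is again $C(G)$-linear so that $D^2=0$ may be tested on $\Gamma(E)$ and sorted by bidegree into RH2), and evaluating on units and degenerate simplices yields RH1); the morphism statement follows the same pattern with RH3)--RH4). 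Two points you leave implicit deserve a word: the generation claim $\Gamma(G_m,t_0^*E_n)=C^\infty(G_m)\cdot t_0^*\Gamma(E_n)$ uses that the $E_n$ have finite rank (Serre--Swan or a partition of unity), and the equivalence ``$D$ preserves normalized cochains $\Leftrightarrow$ RH1)'' is only immediate in one direction on $\Gamma(E)$ --- the converse, that RH1) forces the reconstructed $D$ to preserve \emph{all} normalized cochains, is part of the deferred verification and should be flagged as such. Given that the paper itself delegates the proof to \cite{ac}, deferring the final sign bookkeeping is acceptable, but note that this bookkeeping is precisely where RH2) and RH4) appear with the signs as written, so it cannot be omitted in a complete write-up.
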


\begin{proof}
The Lie groupoid case was proven in \cite[Prop. 3.2]{ac}. Explicit formulas with our conventions are given in \cite[Lema 3.2]{dhos}. The higher Lie groupoid case is analogous.
\end{proof}




The second problem, of characterizing representations up to homotopy from the fibration viewpoint, is the main subject of this paper, and a solution is presented in Theorem \ref{thm:main-intro}. 
The 2-term case was previously solved in \cite{gsm}, one of our inspirations, where 2-term representations up to homotopy are compared with VB-groupoids. A {\bf VB-groupoid} $q:\Gamma\to G$ is a Lie groupoid morphism such that $\Gamma_0\to G_0$ and $\Gamma_1\to G_1$ are vector bundles, and the structure maps of $\Gamma$ are linear \cite{mk}. 
The paradigmatic examples are the {\bf tangent} and {\bf cotangent} VB-groupoids $(TG\toto TM)\to (G\toto M)$ and $(T^*G\toto A^*)\to (G\toto M)$. Equivalently, a VB-groupoid is the same as a simplicial vector bundle $\Gamma\to G$ over a Lie groupoid that is an order 0 fibration.


\begin{remark}\label{rmk:grothendieck}
Given $G=(G_1\toto G_0)$ a Lie groupoid and $R:G\action E_1\oplus E_0$ a 2-term representation up to homotopy, its {\bf Grothendieck construction} $Gr(R)=(t^*E_1\oplus s^*E_0\toto E_0)$ 
is a VB-groupoid over $G$, with source, target, and multiplication given by:
$$s(c,g,e)=e \quad t(c,g,e)=R^y_0c+R^g_1e \quad 
(c',g',e')(c,g,e)=(c'+R_1^{g'}c+R^{g',g}_2e,g'g,e).$$ 
If $E_1=0$ then $R$ is a representation and $Gr(R)=Tr(R)$ is the translation groupoid.
Given $q:\Gamma\to G$ a VB-groupoid, we can split it into a representation up to homotopy $R: G\action (\ker s|_{G_0}\oplus\Gamma_0)$ by using a cleavage. 
The Grothendieck construction is an equivalence of categories between $Rep^2_{\geq0}(G)$ and the category of VB-groupoids over $G$, see \cite{gsm,dho}. 
\end{remark}


\begin{remark}
Representations up to homotopy make sense for arbitrary simplicial sets $X$ \cite[Definition 2.2]{as}. Our examples and motivations come from Lie groupoids, but our techniques and results easily adapt to the higher Lie groupoid case, thus this is the context in which we have formulated the final version of our work. This includes the set-theoretic higher groupoid case. When $X$ is merely a simplicial set, representations up to homotopy are far more intricate, akin to the shift from group to monoids. We plan to explore this case in future work. 
\end{remark}


\section{Introducing higher vector bundles}
\label{section:hvb}


We introduce higher vector bundles and their cleavages, offer alternative formulations, and present the main examples. We also discuss the morphisms between higher vector bundles and define the categories $VB(G)$ and $VB^+(G)$.

\

Let $G$ be a higher Lie groupoid, and let $G_{n,k}$ be the space of $(n,k)$-horns, which is a closed embedded submanifold of $\prod_{i\neq k}G_{n-1}$. 

\begin{definition}
We define a {\bf higher vector bundle} $q: V\to G$ to be a simplicial vector bundle that is also a (set-theoretic) simplicial fibration. It is an {\bf $N$-vector bundle} if it has order $N$. A morphism of higher vector bundles $\psi:V\to V'$ is a smooth simplicial map that is linear over $G$.
\end{definition}

We restrict our attention to higher Le groupoids and higher vector bundles of finite order.
We write $VB^\infty(G)$ for the category of higher vector bundles over $G$, and $VB^N(G)$ for the full subcategory of objects of order $N$. The total space $V$ is a higher Lie groupoid, see Proposition \ref{prop:equiv-def}. Note that if $N_G$, $N_V$ and $N_q$ are the orders of $G$, $V$ and $q$, then $N_V=\max\{N_G,N_q\}$. 


\begin{example}\

\begin{enumerate}[a)]
    \item A higher vector bundle over a manifold is the same as a simplicial vector bundle $q:V\to M$. In fact, since $M$ is discrete as a simplicial set, $q$ is a fibration if and only if $V$ is a higher groupoid, and this is always the case, see Remark \ref{rmk:dk-projective} and Lemma \ref{lemma:dk-groupoid}. Moreover, $q: V\to M$ is an $N$-groupoid if and only if its normalization $E=N(V)$ satisfies $E_n=0$ for all $n>N$.
    \item A 0-vector bundle over a Lie groupoid is the same as a Lie groupoid representation. Given $R: G\action E$, the projection $q:Tr(R)\to G$ is a 0-vector bundle, and conversely, if $q: V\to G$ is a 0-vector bundle, then $V$ is a Lie 1-groupoid, see Remark \ref{rmk:1-groupoid} and Proposition \ref{prop:representation}. 
    \item A 1-vector bundle over a Lie groupoid is the same as a VB-groupoid.  If $q:\Gamma\to G$ is a VB-groupoid, then $s,t:\Gamma_1\to\Gamma_0$ are surjective submersion, hence fiberwise surjective, and $q$ is a fibration. And if $q:V\to G$ is a 1-vector bundle, $V$ is a Lie groupoid, 
    see Remark \ref{rmk:1-groupoid}.
    \item Given $G$ a higher Lie groupoid, its tangent $TG\to G$ is a higher vector bundle. In fact, there is a canonical identification $(TG)_{n,k}\cong T(G_{n,k})$, and since $d_{n,k}:G_n\to G_{n,k}$ is a surjective submersion, the relative horn map $d^\pi_{n,k}:TG_n\to TG_{n,k}\times_{G_{n,k}}G_n$ also is. This example encodes the adjoint representation of a higher Lie groupoid, and is treated in \cite{ttt}.
\end{enumerate}
\end{example}


In a higher vector bundle, the horn-space $q: V_{n,k}\to G_{n,k}$ is naturally an embedded sub-bundle of $\prod_{i\neq k}V_{n-1}\to\prod_{i\neq k}G_{n-1}$, via the map $\prod_{i\neq k}d_i$. 
This can be seen as an instance of the next general lemma, see also \cite[2.10]{wolfson}. 

\begin{lemma}\label{lemma:smooth-horns}
Let $q:V\to G$ be a higher vector bundle and $S\subset\Delta[n]$ be a collapsible sub-simplicial set. Then $\hom(S,V)\to\hom(S,G)$ is a vector subbundle of $\prod_{\substack{[k]\xmono\alpha[n]\\ \alpha\subset S}}V_{k}\to \prod_{\substack{[k]\xmono\alpha[n]\\ \alpha\subset S}}G_{k}$.
\end{lemma}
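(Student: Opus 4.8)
The plan is to follow the template of Lemma~\ref{lemma:closed}: build $S$ out of one of its vertices by a finite tower of elementary horn expansions, and propagate the vector bundle structure along that tower. Since $S$ is contractible (so that the inclusion of a vertex $v\in S_0$ is anodyne), we may filter it as
$$\ast=S_0\subset S_1\subset\cdots\subset S_l=S,\qquad S_i=S_{i-1}\cup_{\Lambda^{k_i}[n_i]}\Delta[n_i],$$
where each $\Delta[n_i]$ is a nondegenerate simplex of $S$, hence of $\Delta[n]$, so that $n_i\le n$; this is the exact analogue of the filtration of a horn used in Lemma~\ref{lemma:closed}, and contractibility of $S$ is what guarantees its existence. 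Applying $\hom(-,V)$ turns each pushout into a pullback, producing a tower
$$\hom(S_i,V)\;\cong\;\hom(S_{i-1},V)\times_{V_{n_i,k_i}}V_{n_i}$$
lying over the analogous tower $\hom(S_i,G)\cong\hom(S_{i-1},G)\times_{G_{n_i,k_i}}G_{n_i}$, whose terms are manifolds because $G$ is a Lie groupoid, equivalently a higher Lie groupoid of order $1$ (Remark~\ref{rmk:1-groupoid}): indeed $G_{n_i}\to G_{n_i,k_i}$ is a surjective submersion, namely a diffeomorphism for $n_i>1$ and the source or target map for $n_i=1$.

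I would then prove, by induction on the number of nondegenerate simplices of $S$, that $\hom(S,V)\to\hom(S,G)$ is a vector bundle. The base case $S=\ast$ is $V_0\to G_0$. For the inductive step, both $\hom(S_{i-1},V)\to\hom(S_{i-1},G)$ and $\hom(\Lambda^{k_i}[n_i],V)=V_{n_i,k_i}\to G_{n_i,k_i}$ are vector bundles by the inductive hypothesis, since $S_{i-1}$ and $\Lambda^{k_i}[n_i]$ are contractible subcomplexes of standard simplices with strictly fewer nondegenerate simplices than $S$. The relative horn map $V_{n_i}\to V_{n_i,k_i}$ is a vector bundle morphism over $G_{n_i}\to G_{n_i,k_i}$ that is \emph{fibrewise surjective}, because $q\colon V\to G$ is a simplicial fibration. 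This is the one place where linearity does genuine work: a fibrewise surjective morphism of vector bundles automatically has a sub-bundle as kernel, hence is a surjective submersion. Therefore the fibre product $\hom(S_{i-1},V)\times_{V_{n_i,k_i}}V_{n_i}$, being the pullback of vector bundles along a surjective submersion that is a vector bundle morphism, is again a vector bundle over $\hom(S_i,G)$, of locally constant rank $\rk\hom(S_{i-1},V)+\rk V_{n_i}-\rk V_{n_i,k_i}$. This closes the induction.

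For the ``quotient'' description, extend the filtration all the way to $\Delta[n]$, i.e.\ choose $S=S_l\subset S_{l+1}\subset\cdots\subset S_{l'}=\Delta[n]$ of the same type (possible since $S\hookrightarrow\Delta[n]$ is anodyne); running the argument one level at a time shows that the restriction $V_n=\hom(\Delta[n],V)\to\hom(S,V)$ is a composite of surjective submersions and is fibrewise surjective and linear over $G_n\to\hom(S,G)$, which itself is a surjective submersion because $G$ is a higher Lie groupoid (cf.\ Lemma~\ref{lemma:closed}); hence $\hom(S,V)$ is a vector bundle quotient of $V_n$. For the ``sub-bundle'' description, consider the restriction map $\hom(S,V)\to\prod_{\alpha}V_{k}$ indexed by the injective $\alpha\colon[k]\to[n]$ with image in $S$; it is a vector bundle morphism over the analogous map for $G$. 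Keeping only the top-dimensional nondegenerate simplices of $S$ already yields an injective immersion, and, exactly as in Lemma~\ref{lemma:closed}, realising $\hom(S,V)$ as the equalizer of the two restriction maps $\prod_{\sigma}V_{|\sigma|}\rightrightarrows\prod_{\sigma,\sigma'}V_{|\sigma\cap\sigma'|}$ (agreement on overlaps) exhibits it as a closed subset, since the equalizer of two maps into a Hausdorff manifold is closed; being a vector bundle by the first part, it is then automatically a sub-bundle.

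I expect the main obstacle to be the inductive step of the second paragraph: one must be certain that the fibre products defining the tower are smooth vector bundles, and the crucial input is the remark that for a \emph{linear} simplicial fibration the merely set-theoretic surjectivity of the relative horn maps upgrades, for free, to smooth surjective submersions with sub-bundle kernels — so no smoothness hypothesis on $V$ beyond its being a simplicial vector bundle is needed. A secondary, bookkeeping point is to set up the combinatorial filtration of a contractible $S$ into elementary horn expansions so that the induction on the number of simplices is well-founded, with each $\Lambda^{k_i}[n_i]$ and each $S_{i-1}$ a smaller contractible subcomplex of a standard simplex.
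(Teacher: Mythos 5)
Your proposal follows essentially the same route as the paper's proof: decompose $S$ by elementary horn attachments, observe that fibrewise-surjective morphisms of vector bundles are surjective submersions with sub-bundle kernels so that the fibred products $\hom(S_i,V)\cong\hom(S_{i-1},V)\times_{V_{n_i,k_i}}V_{n_i}$ stay smooth vector bundles, and then extract the quotient description (restriction $V_n\to\hom(S,V)$ is fibrewise surjective over a surjective submersion) and the sub-bundle description inside $\prod_\alpha V_k$, exactly as in the paper's induction on the number of non-degenerate simplices. The one caveat — that contractibility of $S\subset\Delta[n]$ yields a filtration by horn pushouts (this is collapsibility, strictly stronger than the inclusion being anodyne) — is the same combinatorial step the paper itself takes implicitly when writing $S=S'\cup_{\Lambda^j[k]}\Delta[k]$, so it does not constitute a new gap in your argument.
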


\begin{proof}
Write $\hom(S,X)\cong X_{S}$. 
Since $S$ is collapsible, it admits a filtration $S=S_a\supset S_{a-1}\supset\dots\supset S_1\supset S_0=\ast$ and isomorphisms $S_k\cong S_{k-1}\cup_{\Lambda^{j_k}[n_k]}\Delta[n_k]$.
We argue inductively on $m=\min(a,\dim S)$. If $m=0$ the result is clear. In the general case, we have set-theoretic fibered product diagrams in the total spaces and in the bases:
$$\xymatrix{
V_{S_a} \ar[r] \ar[d] & V_{S_{a-1}} \ar[d]\\
V_{n_a} \ar[r] & V_{n_a,j_a}
}
\qquad
\xymatrix{
G_{S_a} \ar[r] \ar[d] & G_{S_{a-1}} \ar[d]\\
G_{n_a} \ar[r] & G_{n_a,j_a}
}$$
By inductive hypothesis we have that $V_{n_a,j_a}\to G_{n_a,j_a}$ and $V_{S_{a-1}}\to G_{S_{a-1}}$ are well-defined  sub-bundles of $\prod_{i\neq j_a}V_{n_a-1}\to \prod_{i\neq j_a}G_{n_a-1}$ and $\prod_{\substack{[k]\xmono\alpha[n]\\ \alpha\subset S_{a-1}}}V_{k}\to \prod_{\substack{[k]\xmono\alpha[n]\\ \alpha\subset S_{a-1}}}G_{k}$. Moreover, since $V\to G$ is a fibration and $G$ is a higher Lie groupoid, $V_{n_a}\to V_{n_a,j_a}$ is fiberwise surjective and covers a surjective submersion. It follows from the standard transversality criterion that $V_S\to G_S$ is a vector sub-bundle of $\prod_{\substack{[k]\xmono\alpha[n]\\ \alpha\subset S}}V_{k}\to \prod_{\substack{[k]\xmono\alpha[n]\\ \alpha\subset S}}G_{k}$, see e.g. \cite[A.2.3]{bcdh}. 
\end{proof}



The following proposition provides some alternative ways to define higher vector bundles.

\begin{proposition}\label{prop:equiv-def}
Let $G$ be a higher Lie groupoid and $q: V\to G$ be a simplicial vector bundle. The following are equivalent:
\begin{enumerate}[i)]
    \item $q$ is a set-theoretic simplicial fibration; 
    \item $q$ is a smooth simplicial fibration;
    \item $V$ is a higher Lie groupoid.
\end{enumerate}
\end{proposition}



\begin{proof} 
Consider the following diagram:
    $$\xymatrix{V_n \ar@/^/[rdr]^{d_{n,k}} \ar@/_/[drd]_{q_n} \ar[rd]^(.6){d^q_{n,k}}& & \\ 
    & G_n\times_{G_{n,k}} V_{n,k} \ar[d] \ar[r] & V_{n,k} \ar[d]^{q_{n,k}}  \\ 
    & G_n \ar[r]_{d_{n,k}} & G_{n,k} }$$

\begin{itemize}
    \item {\bf i) $\then$ ii)} By Lemma \ref{lemma:smooth-horns} we have that $V_{n,k}\to G_{n,k}$ is a vector bundle. 
Since $d^q_{n,k}$ is surjective and also a vector bundle map covering $\id_{G_n}$, it must be fiber-wise surjective. Then $d^q_{n,k}$ is a vector bundle epimorphism and, in particular, a surjective submersion, proving ii).
    \item {\bf ii) $\then$ iii)} We know that $d^q_{n,k}:V_n\to G_n\times_{G_{n,k}}V_{n,k}$ is a surjective submersion and a vector bundle map, then it is fiberwise surjective, and so does $d_{n,k}:V_n\to V_{n,k}$. Since $d_{n,k}:G_n\to G_{n,k}$ is a surjective submersion,  $d_{n,k}:V_n\to V_{n,k}$ also is, proving iii).
\item {\bf iii) $\then$ i)} $V_{n,k}\to G_{n,k}$ is a vector bundle, for it is an embedded submanifold of $\prod_{i\neq k} V_{n-1}$ invariant under the multiplication by scalars \cite[Thm 2.3]{gr}. Since $d_{n,k}:V_n\to V_{n,k}$ is a surjective submersion and a vector bundle map, it must be fiber-wise surjective, and so does $d^q_{n,k}:V_n\to G_n\times_{G_{n,k}}V_{n,k}$. Then the latter is surjective and i) follows. 
\end{itemize}
\end{proof}

%




Given $q: V\to G$ a higher vector bundle, its {\bf core} $E=\bigoplus_{n\geq0}E_n\to G_0$ is defined by 
$E_n=\ker (d_{n,0}:V_n\to V_{n,0})|_{G_0}$. This graded vector bundle is associated with the restriction to the unit groupoid $V|_M$ via Dold-Kan. Note that $q:V\to G$ is an $N$-vector bundle if and only if $E_n=0_{G_0}$ for all $n>N$, for $\rk E_n= \rk V_n - \rk V_{n,0}$. 
We now introduce the smooth linear versions of cleavages and flatness. Recall that we use the non-standard notation $\sigma_k:[k]\to[n]$ for the inclusion and $s_k:X_n\to X_k$ for the corresponding map on the simplicial set or object.

\begin{definition}\label{def:linear-cleavage}
An {\bf $n$-cleavage} $C_n\subset V_n$ is a set-theoretic $n$-cleavage that is also a sub-bundle, so $C_{n}\oplus \ker d_{n,k}=V_n$ for every $k<n$. A {\bf cleavage} $C=\{C_n\}_{n\geq1}$ is a collection of $n$-cleavages. We say that it is {\bf normal} or {\bf flat} if it is so set-theoretically. And we say that it is {\bf weakly flat} if it is flat over the zero section, namely given $v\in C_{n}$ such that $s_0(v)=0$, $s_k(v)\in C$ for all $k$, and $d_i(v)\in C$ for all $i>0$, then $d_{0}(v)\in C$ as well. 
\end{definition}

\begin{example}\ \label{ex:cleavages}

\begin{enumerate}[a)]
    \item The core of a simplicial vector bundle $q: V\to M$ over a manifold is the graded vector bundle underlying its normalization in the sense of Dold-Kan.
    There is only one possible normal cleavage $C_n=D_n$, spanned by the degenerate simplices, and it is flat, see Lemma \ref{lemma:dk-groupoid}.
    \item The core of a Lie groupoid representation is just the unit vector bundle. Its unique cleavage $C_n=(G\ltimes_R E)_n$ is normal and flat.
    \item The core of a VB-groupoid $q:V\to G$ is given by $E_1=\ker s|_{G_0}$ and $E_0=V_0$. A normal cleavage $C=C_1$ is called a {\it right horizontal lift} in \cite{gsm} and yields a representation up to homotopy $R: G\action E$. Normal cleavages always exist, and any cleavage is weakly flat: if $v\in V_2$ is such that $s_0(v)=0$, $d_1(v)\in C_1$ and $d_2(v)\in C_1$, then $d_1(v)=0$ and $d_2(v)=0$, and therefore $v=0$.  A flat cleavage yields a strict representation and it may not exist, as in the tangent VB-groupoid of the pair groupoid
    $T(S^2\times S^2)\toto T(S^2)$ of the 2-sphere.
\end{enumerate}
\end{example}

We now focus on morphisms of higher vector bundles equipped with cleavages.

\begin{definition}
Let $V$ and $V'$ be higher vector bundles endowed with cleavages $C$ and $C'$, respectively. A morphism $\phi:V\to V'$ is {\bf flat} if  $\phi(C)\subset C'$, and is  {\bf weakly flat} if it satisfies the following more lax condition: if $v\in C\subset V$ is such that $s_0(v)=0$ and $s_k(v)\in C$ for every $k>0$, then $\phi(v)\in C'$. 
\end{definition}

\begin{example}
\begin{enumerate}[a)]
\item Every morphism between simplicial vector bundles over a manifold $M$ must be flat, for the only cleavage is spanned by the degenerate simplices, see Lemma \ref{lemma:dk-groupoid}.
\item Morphisms between 0-vector bundles are also flat, for if $q:V\to G$ is a 0-vector bundle, the only cleavage is $C_n=V_n$, which is normal and flat.
\item Let $V$ and $V'$ be VB-groupoids over a Lie groupoid $G$, endowed with cleavages $C$ and $C'$, respectively. We claim that any morphism $\phi: V\to V'$ of VB-groupoids is weakly flat. In fact, given $v\in C_1$ such that $s_0(v)=0$, we necessarily have $v=0$, so $\phi(v)=0\in C'$.
\end{enumerate}
\end{example}

\begin{remark}\label{rmk:forgetful}
The composition of weakly flat morphisms is weakly flat.  We write $VB^\infty_+(G)$ for the category of higher vector bundles coupled with a normal weakly flat cleavage, and where the morphisms are weakly flat, and $VB^N_+(G)$ for the subcategory of objects of order $N$. We write 
$$U:VB^\infty_+(G)\to VB^\infty(G) \qquad U^N:VB^N_+(G)\to VB^N(G)$$ 
for the forgetful functors. If $G$ is a Lie groupoid then $U^1,U^0$ are equivalences of categories, for every $V$ admits a normal weakly flat cleavage, and every $\phi$ is weakly flat. For $N\geq 2$, we will show with Examples \ref{ex:es} and \ref{ex:not-full} that $U^N$ (and hence $U$) is not essentially surjective nor full. 
\end{remark}


We close this section with two intricate examples. The first one shows a higher vector bundle of order 2 that does not admit a cleavage at all. This shows that $U^2$ is not essentially surjective. Nevertheless, in future work, we will show that every higher vector bundle is homotopic to one admitting a cleavage. The second example present a morphism between higher vector bundles endowed with weakly flat cleavages that is not weakly flat, showing that $U^2$ is not full.

\begin{example}\label{ex:es}
Let $G=S^1$, viewed as a Lie groupoid with a single object $\ast$. Let $V_1\to S^1$ be the non-trivial line bundle. Define $V_n\to G_n$ as the pullback of $\prod_{0\leq i<j\leq n}V_1\to \prod_{0\leq i<j\leq n}G_1$ along the map $\phi: G_n\to\prod_{0\leq i<j\leq n}G_1$, $\phi_{j,i}(g)\mapsto g|_{j,i}$, that sends a simplex to its edges.
It is easy to check that $V$ is a higher vector bundle over $G$ of order 2. 
Then $K_2=\ker(d^q_{2,0}: V_2\to V_{2,0})$ is the pullback $d_0^*V_1$, which is a non-trivial bundle. We will see in Lemma \ref{lemma:direct-sum} that the existence of a cleavage forces $K_2$ to be trivial. Then $q: V\to G$ does not admit a cleavage.
\end{example}


\begin{example}\label{ex:not-full}
Let $(E,\partial)$ be the chain complex such that $E_1=E_2=\R$, $E_n=0$ for $n\neq 1,2$, and $\partial=\id:E_2\to E_1$. Let $G$ be the pair groupoid over $G_0=\{y,x\}$, and let $q:V\to G$ be the pullback of $DK(E,\partial)\to\ast$ along the trivial map $G\to\ast$. Then $q:V\to G$ is a 2-vector bundle and $V$ is a Lie 2-groupoid.
A 2-simplex $v\in V_2$ can be visualized as follows:
$$\begin{matrix}\xymatrix{ 
& v_1 \ar[dl]_{v_{210}+v_{20}-v_{10}} \ar@{}[d]|{v_{210}} & \\
v_2 & & v_0 \ar[ul]_{v_{10}} \ar[ll]^{v_{20}}}\end{matrix}\qquad
\substack{v_i\in\{0_x,0_y\} \\ v_{210}, v_{10},v_{20}\in\R}$$
Four such triangles define a $3$-simplex $v\in V_3$ if their edges agree and the faces satisfy $v_{310}+v_{321}=v_{320}+v_{210}$.
A cleavage $C\subset V$ for $q$ is completely determined by $C_2\subset V_2$. We will define two cleavages $C'_2,C_2\subset V_2$ so that $\id:(V,C')\to (V,C)$ is not weakly flat. The cleavage $C_2\subset V_2$ is the canonical one, consisting of those $v$ with $v_{210}=0$. It is clearly normal and flat. The cleavage $C'_2\subset V_2$ is defined by setting ${C'}_2^g=C_2^g$ whenever $g\notin \{(xxy),(xyx),(yxy)\}\subset G_2$, and setting ${C'}_2^{(xxy)}$, ${C'}_2^{(xyx)}$, ${C'}_2^{(yxy)}$ to be formed by the following type of triangles, respectively: 
$$\begin{matrix}\xymatrix{ 
& 0_x \ar[dl]_{-2\lambda+2\mu} \ar@{}[d]|{-\lambda+\mu} & \\
0_x & & 0_y \ar[ul]_{\lambda} \ar[ll]^{\mu}} & 
& \xymatrix{ 
& 0_y \ar[dl]_{-\lambda-\mu} \ar@{}[d]|{-2\mu} & \\
0_x & & 0_x \ar[ul]_{\lambda} \ar[ll]^{\mu}} & 
& \xymatrix{ 
& 0_x \ar[dl]_{\lambda+\mu} \ar@{}[d]|{2\lambda} & \\
0_y & & 0_y \ar[ul]_{\lambda} \ar[ll]^{\mu}} \\ \\
v_{\lambda,\mu}^{xxy} & & v_{\lambda,\mu}^{xyx} & & v_{\lambda,\mu}^{yxy}\end{matrix}$$
It is tedious but easy to check that $C'$ is normal and weakly flat. 
But $\id:(V,C')\to(V,C)$ is not weakly flat, for $s_0(v^{xxy}_{0,\mu})=0$, $s_k(v^{xxy}_{0,\mu})\in C'_k$ for $k=1,2$, and yet $v^{xxy}_{0,\mu}\notin C_2$.
\end{example}







\section{The semi-direct product construction}
\label{section:sdp}



We present our semi-direct product construction, which starts with a representation up to homotopy $R: G \curvearrowright E$ and produces a well-defined higher vector bundle $(G \ltimes_R E) \to G$.
In the Example \ref{example:sdp2} we show that this recovers our formula for Dold-Kan \ref{prop:inverse-dk} and the Grothendieck Construction \ref{rmk:grothendieck}. We show that $G\ltimes_RE$ comes with a canonical normal weakly flat cleavage.

\


Let $G$ be a higher Lie groupoid. Regard an $n$-simplex $g\in G_n$ as a functor $g:[n]\to G$, 
and denote by $x_i=(\chi_i)^*: G_ n\to  G_ 0$ the {\bf vertex map} induced by pre-composition with the $i$-th inclusion $\chi_i:[0]\to[n]$, $\chi_i(0)=i$. 

\begin{definition}
Given $E=\oplus_{n=0}^N E_n$ a vector bundle over $G_0$, and given $R:G\action E$ a representation up to homotopy, we define the {\bf $n$-simplex vector bundle} $(G\ltimes_R E)_n$ as the vector bundle over $G_n$ given as follows, where the sum is over the injective order maps preserving $0$:
$$(G\ltimes_RE)_n=\bigoplus_{\substack{[k]\xto\alpha[n] \\ \alpha(0)=0}} x_{\alpha(k)}^*E_k$$
\end{definition}

Given $g\in G_n$ a simplex, $\alpha:[k]\to[n]$ an injective order map preserving 0, and $e\in E_k$ a degree $k$ vector, we write $(e,\alpha,g)\in( G\ltimes E)_n$ for the corresponding homogeneous vector in the direct sum. 

\begin{remark}
When $n=2$ we have four injective order maps $\alpha:[k]\to[2]$, namely $\alpha_0$, $\alpha_{10}$, $\alpha_{20}$ and $\alpha_{210}$, where we are writing $\alpha_I$ for the injection with image $I\subset[2]$. The next picture suggests a way to visualize the four types of homogeneous vectors in $(G\ltimes_RE)_2$ over some $g\in G_2$, where the dotted squares represent the 0-section:
$$\begin{matrix}
\begin{tikzpicture}[scale=1]
\draw[dashed](-0.2,0) -- (3.2,0) -- (4.2,1.3) 
node[above] {$(e_0,\alpha_0,g)$}
-- (0.8,1.3) -- (-0.2,0) ;
\draw[fill] (1,1)node[below left] {$g_2$} circle (1.8pt);
\draw[fill] (2,0.5)node[below ] {$g_1$} circle (1.8pt);
\draw[fill] (3,1)node[below right] {$g_0$} circle (1.8pt);
\draw[dashed](1,1) -- (2,0.5) -- (3,1) -- (1,1);
\draw[fill] (1,2) circle (1.8pt);
\draw[fill] (2,1.5) circle (1.8pt);
\draw[fill] (3,2) circle (1.8pt);
\draw[thick](1,2) -- (2,1.5) -- (3,2) -- (1,2);
\end{tikzpicture}
& &
\begin{tikzpicture}[scale=1]
\draw[dashed](-0.2,0) -- (3.2,0) -- (4.2,1.3) 
node[above left] {$(e_1,\alpha_{10},g)$}
-- (0.8,1.3) -- (-0.2,0) ;
\draw[fill] (1,1)node[below left] {$g_2$} circle (1.8pt);
\draw[fill] (2,0.5)node[below ] {$g_1$} circle (1.8pt);
\draw[fill] (3,1)node[below right] {$g_0$} circle (1.8pt);
\draw[dashed](1,1) -- (2,0.5) -- (3,1) -- (1,1);
\draw[fill] (2,1.5) circle (1.8pt);
\draw[thick](1,1) -- (2,1.5) -- (3,1) -- (1,1);
\draw[dashed](2,0.5) -- (2,1.5);
\end{tikzpicture}
\\[10pt]
\begin{tikzpicture}[scale=1]
\draw[dashed](-0.2,0) -- (3.2,0) -- (4.2,1.3) 
node[above left] {$(e_1,\alpha_{20},g)$}
-- (0.8,1.3) -- (-0.2,0) ;
\draw[fill] (1,1)node[below left] {$g_2$} circle (1.8pt);
\draw[fill] (2,0.5)node[below ] {$g_1$} circle (1.8pt);
\draw[fill] (3,1)node[below right] {$g_0$} circle (1.8pt);
\draw[dashed](1,1) -- (2,0.5) -- (3,1) -- (1,1);
\draw[fill] (1,2) circle (1.8pt);
\draw[thick](1,2) -- (2,0.5) -- (3,1) -- (1,2);
\draw[dashed](1,1) -- (1,2);
\end{tikzpicture}
& &
\begin{tikzpicture}[scale=1]
\draw[dashed](-0.2,0) -- (3.2,0) -- (4.2,1.3) 
node[above left] {$(e_2,\alpha_{210},g)$}
-- (0.8,1.3) -- (-0.2,0) ;
\draw[fill] (1,1)node[below left] {$g_2$} circle (1.8pt);
\draw[fill] (2,0.5)node[below ] {$g_1$} circle (1.8pt);
\draw[fill] (3,1)node[below right] {$g_0$} circle (1.8pt);
\draw[dashed](1,1) -- (2,0.5) -- (3,1) -- (1,1);
\draw[thick](1,1) -- (3,1) -- (2,0.5);
\draw[thick]  (1,1) to[out=90,in=90,distance=1cm] (2,0.5);
\draw[thick] (3,1) -- (1.5,1.5);
\end{tikzpicture}
\end{matrix}
$$
The vector $(e_0,\alpha_0,g)$ can be seen as a triangle with first vertex $e_0\in E^{g_0}_0$, horizontal faces over $g_{10}$ and $g_{20}$, and a thin filling of that horn over $g$. 
The vector $(e_1,\alpha_{10},g)$ has its first vertex on the 0-section, a non-horizontal face over $g_{10}$ ruled by $e_1\in E^{g_1}_1$, a null face over $g_{20}$, and a thin filling of that horn over $g$. 
Then $(e_1,\alpha_{20},g)$ is analogous to the previous one, with a null face over $g_{10}$, a non-horizontal face over $g_{20}$ ruled by $e_1\in E^{g_2}_1$, and a thin filling. 
Finally, the vector $(e_2,\alpha_{210},g)$ has its 0-horn on the zero section, and its filling is not thin, but curved by $e_2\in E^{g_2}_2$.
\end{remark}


\begin{example}\label{example:sdp1}
Let $G$ be a Lie groupoid. We revisit the main examples discussed in \ref{ex:ruth}:

\begin{enumerate}[a)]
    \item When $G=M$ is the unit groupoid of a manifold, so $R:M \action E$ is a chain complex, we have $G_n=M$ for every $n$, every vertex map is the identity, and the $n$-simplex vector bundle is isomorphic to the $n$-th degree of our inverse of Dold-Kan \ref{prop:inverse-dk}:
    $$(M\ltimes_RE)_n\cong DK(E,\partial)_n$$
    \item When $N=0$, so $E=E_0$ and $R$ is a usual representation, the $n$-simplex vector bundle is isomorphic to the $n$-th degree of the nerve of the translation groupoid $Tr(R)$:
    $$(G\ltimes_RE)_n=x_0^*E\cong Tr(R)_n$$
    A vector in $(G\ltimes_RE)_n$ can be seen as a tuple $((g_n,\dots,g_1),e)$, where $e\in x_0^*E$ and $g=(g_n,\dots,g_1)\in G_n$ is a chain of composable arrows. Such a vector is mapped to the chain $(g_n,R^{g_{n-1}\dots g_1}e)\dots(g_2,R^{g_1}e)(g_1,e)$ of composable arrows in $Tr(R)_n$.
    \item When $N=1$, so $E=E_1\oplus E_0$, the $n$-simplex vector bundle is isomorphic to the $n$-th degree of the Grothendieck construction $Gr(R)_n$ recalled in Remark \ref{rmk:grothendieck}:
    $$(G\ltimes_RE)_n= x_0^*E_0\oplus\bigoplus_{i=1}^nx_i^*E_1\cong Gr(R)_n$$ 
    A vector in $(G\ltimes_RE)_n$ can be seen as a tuple
    $(e,c_n,\dots,c_1,g)$, where $e\in x_0^*E$, $c_i\in x_i^*E_1$, and $g=(g_n,\dots,g_1)\in G_n$ is a chain of composable arrows. Such a vector is mapped to the collection $((c_n,g_n\dots g_1,e),\dots,(c_2,g_2g_1,e),(c_1,g_1,e))$,  
    where we regard a point in $Gr(R)_n$ as $n$ arrows with the same source. This {\it change of variables}, from composable arrows to arrows with the same source, was discussed in \cite[Rmk. 11.13]{cms}, and turns out to be very convenient when working with higher categories.
\end{enumerate}
\end{example}


We have just constructed the vector bundles $(G\ltimes_RE)_n$. Let us provide formulas for the faces and degeneracies, extending those of our Dold-Kan construction in Proposition \ref{prop:inverse-dk}. Given $\theta:[k]\to[n]$, recall that $\theta':[k+1]\to[n+1]$ is the map given by $\theta'(0)=0$, $\theta'(i+1)=\theta(i)+1$, and that we use $\sigma_k:[k]\to[n]$ and $\tau_k:[k]\to[n]$ for the first and last inclusions. Given $\beta:[l]\to[n]$, we write $\pi_\beta:(G\ltimes E)_n\to E_l$ for the projection covering $x_{\beta(l)}: G_n\to G_0$. 

\begin{definition}\label{def:sdp-faces-degeneracies}
We define the faces $d_i:( G\ltimes_R E)_n\to( G\ltimes_R E)_{n-1}$ and the degeneracies
$u_j:( G\ltimes_R E)_n\to( G\ltimes_R E)_{n+1}$ of the semi-direct product as follows:
\begin{itemize}
    \item $\pi_\beta u_j=\pi_{\upsilon_j\beta}$ for every $\beta:[l]\to[n+1]$, with the convention that $\pi_\gamma=0$ whenever $\gamma$ is not injective; 
    \item $\pi_\beta d_i=\pi_{\delta_i\beta}$ for every 
    $\beta:[l]\to[n-1]$ and $i>0$; and
    \item 
$\pi_\beta d_0=
\sum_{k=0}^{l+1}(-1)^{l}
R_{l+1-k}^{g\beta'\tau_{l+1-k}}\pi_{\beta'\sigma_{k}}-\sum_{i=1}^{l} (-1)^i\pi_{\beta'\delta_i}$ for every $\beta:[l]\to[n-1]$.
\end{itemize}
\end{definition}

Let us rewrite the above faces and degeneracies, by computing their values on homogeneous vectors, as we did in Remark \ref{rmk:homog-formulas} for the inverse of Dold-Kan. 

\begin{remark}\label{rmk:homog-formulas-2}\label{rmk:alt-formulas}
The positive faces and the degeneracies on the homogeneous vectors are given by the formulas below:
$$
\pi_\beta u_j(e,\alpha,g)=\begin{cases} 
e & \alpha=\upsilon_j\beta \\
0 & \text{otherwise}
\end{cases}
\qquad
\pi_\beta d_i(e,\alpha,g)=\begin{cases}
  e & \alpha=\delta_i\beta  \\ 
0 & \text{otherwise}\end{cases}\ (i\neq0)
$$
Regarding the zero face map $d_0$, there are two cases where the contributions are non-trivial, which we visualize using black dots for $\alpha$ and gray dots for $\delta_0\beta$:
\begin{enumerate}[(I)]
        \item If $\alpha=\beta'\sigma_k$ for some $0\leq k\leq l+1$, then  
$\pi_\beta d_0(e,\alpha,g)=(-1)^{l} R^{g\beta'\tau_{l+1-k}}_{l+1-k}(e)$, so the vector $e$ sitting over the first $k$ arrows $g\beta'\sigma_k$ is pushed forward along the last $l+1-k$ arrows $g\beta'\tau_{l+1-k}$:
$$\begin{tikzpicture}
\draw[thick](6,0)node[below] {$0$} --(0,0) node[below] {$n$};
\draw[fill] (6,0.2) circle (1.8pt);
\draw (6.4,0.2) node {\footnotesize $\alpha$};
\draw[fill] (5.5,0.2) circle (1.8pt);
\draw[fill] (4.5,0.2) circle (1.8pt);
\draw[fill] (4,0.2) circle (1.8pt);
\draw (5.5,0) node[below] {$1$};
\draw (6,0.45) circle (1.8pt);
\draw (6.4,0.45) node {\footnotesize $\beta'$};
\draw[fill,gray] (5.5,0.45) circle (1.8pt);
\draw[fill,gray] (4.5,0.45) circle (1.8pt);
\draw[fill,gray] (4,0.45) circle (1.8pt);
\draw[fill,gray] (3.5,0.45) circle (1.8pt);
\draw (4,0) node[below] {\footnotesize $\alpha(k)$};
\draw[fill,gray] (3,0.45) circle (1.8pt);
\draw[fill,gray] (2,0.45) circle (1.8pt); \draw (2,0) node[below] {\footnotesize $\beta'(l+1)$};
\end{tikzpicture}$$
        \item If $\alpha=\beta'\delta_i$ for some $1\leq i\leq k=l$, then $\pi_\beta d_0(e,\alpha,g)=(-1)^{i+1}e$:
$$\begin{tikzpicture}
\draw[thick](6,0)node[below] {$0$} --(0,0) node[below] {$n$};
\draw[fill] (6,0.2) circle (1.8pt);
\draw (6.4,0.2) node {\footnotesize $\alpha$};
\draw[fill] (5.5,0.2) circle (1.8pt);
\draw[fill] (4.5,0.2) circle (1.8pt);
\draw[fill] (4,0.2) circle (1.8pt);
\draw[fill] (3,0.2) circle (1.8pt);
\draw[fill] (2,0.2) circle (1.8pt); 
\draw (6,0.45) circle (1.8pt);
\draw (6.4,0.45) node {\footnotesize $\beta'$};
\draw[fill,gray] (5.5,0.45) circle (1.8pt);
\draw[fill,gray] (4.5,0.45) circle (1.8pt);
\draw[fill,gray] (4,0.45) circle (1.8pt);
\draw[fill,gray] (3.5,0.45) circle (1.8pt);
\draw[fill,gray] (3,0.45) circle (1.8pt);
\draw[fill,gray] (2,0.45) circle (1.8pt);
\draw (5.5,0) node[below] {$1$};
\draw (3.5,0) node[below] {\footnotesize $\beta'(i)$};
\draw (2,0) node[below] {\footnotesize $\alpha(k)$};
\end{tikzpicture}$$
\end{enumerate}
We can derive from the above formulas the following alternative useful computations:
\begin{enumerate}[$\bullet$]
    \item The positive faces are given by $d_i(e,\alpha,g)=
\begin{cases}(e,\upsilon_i\alpha,g\delta_i) & i\notin\alpha \\ 0 & i\in\alpha\end{cases}$
    \item The degeneracies are given by $u_j(e,\alpha,g)=\begin{cases}
(e,\delta_{1}\alpha,g\upsilon_0) & j=0\\
(e,\delta_j\alpha,g\upsilon_j)& j\notin\alpha\\
(e,\delta_j\alpha,g\upsilon_j)+(e,\delta_{j+1}\alpha,g\upsilon_j)& 0\neq j\in\alpha
\end{cases}$
    \item The 0-face is $d_0(e,\alpha,g)=
\sum_{\beta'\sigma_k=\alpha}(-1)^{l}(R_{l+1-k}^{g\beta'\tau_{l+1-k}}(e),\beta,g\delta_0)-\sum_{\beta'\delta_i=\alpha}(-1)^i(e,\beta,g\delta_0)$ where the sums are over all $\beta:[l]\to[n-1]$ satisfying the equations, and $1\leq i\leq l$.
\end{enumerate}
Note in particular that if $\alpha\neq\sigma_0$ and $1\notin\alpha$ then $d_0(v,\alpha,g)=(v,\upsilon_0\alpha,g\delta_0)$.
\end{remark}



We next use the above formulas to check that the faces and degeneracies of the semi-direct product satisfy the simplicial identities, hence defining a simplicial vector bundle.

\begin{proposition}\label{prop:simplicial-identities}
Given $G$ a higher Lie groupoid, $E=\oplus_{n=0}^NE_n\to G_0$ a graded vector bundle and $R: G\action E$ a representation up to homotopy,
the operators $d_i$ and $u_j$ from Definition \ref{def:sdp-faces-degeneracies} satisfy the simplicial identities recalled in Remark \ref{rmk:simplicial-identities}.    
\end{proposition}

The proof is rather technical and long, so we split it into several steps.

\begin{proof}[Proof of the simplicial identities
not involving $d_0$]
Given an order map $[m]\xto\theta[n]$ preserving $0$, define $\theta^*:( G\ltimes E)_n\to ( G\ltimes E)_m$, $\pi_\beta \theta^*=\pi_{\theta\beta}$,
always with the convention that $\pi_\gamma=0$ whenever $\gamma$ is not injective. The positive faces and the degeneracies are instances of this formula. Given $[m]\xto{\theta_1}[n]\xto{\theta_2}[p]$ preserving $0$, we have $\pi_\beta\theta_1^*\theta_2^*=\pi_{\theta_1\beta}\theta_2^*=\pi_{\theta_2\theta_1\beta}=\pi_\beta (\theta_2\theta_1)^*:(G\ltimes E)_p\to(G\ltimes E)_m$. This functorial property readily implies every simplicial identity not involving $d_0$.
\end{proof}

\begin{proof}[Proof of
$d_0u_j=u_{j-1}d_0$]
By definition we have 
\begin{align*}
 \pi_\beta d_0u_j &=\left(
\sum_{k=0}^{l+1} (-1)^{l} R_{l+1-k}^{g\upsilon_j\beta'\tau_{l+1-k}}\pi_{\beta'\sigma_k}
-\sum_{i=1}^{l} (-1)^i\pi_{\beta'\delta_i}\right)u_j\\
&= \sum_{k=0}^{l+1}(-1)^{l} R_{l+1-k}^{g\upsilon_j\beta'\tau_{l+1-k}}\pi_{\upsilon_j\beta'\sigma_k}-
\sum_{i=1}^l (-1)^i\pi_{\upsilon_j\beta'\delta_i}.
\end{align*}
The other composition is
$$\pi_\beta u_{j-1}d_0 = \pi_{\upsilon_{j-1}\beta}d_0=\sum_{k=0}^{l+1} (-1)^{l} R_{l+1-k}^{g(\upsilon_{j-1}\beta)'\tau_{l+1-k}}\pi_{(\upsilon_{j-1}\beta)'\sigma_k}-
\sum_{i=1}^{l} (-1)^i\pi_{(\upsilon_{j-1}\beta)'\delta_i}.$$
These two expressions agree because 
$(\theta_1\theta_2)'=\theta_1'\theta_2'$ and $\upsilon_j'=\upsilon_{j+1}$.
\end{proof}

\begin{proof}[Proof of 
$d_0d_j=d_{j-1}d_0$ for $j>1$]
These are completely analogous to the previous case, now using that $\delta_i'=\delta_{i+1}$.
\end{proof}

The previous simplicial identities follow from the combinatorial definition, disregarding RH1) and RH2). RH1) is needed now to show $d_0u_0=\id$, and RH2) will be equivalent to $d_0d_1=d_0d_0$.

\begin{proof}[Proof of $d_0u_0=\id$]
Given $\beta:[l]\to[n]$, we have
\begin{align*}
\pi_\beta d_0u_0  &=
\left(\sum_{k=0}^{l+1}(-1)^{l} R_{l+1-k}^{g\upsilon_0\beta'\tau_{l+1-k}}\pi_{\beta'\sigma_k}-
\sum_{i=1}^l (-1)^{i}\pi_{\beta'\delta_i} 
\right) u_0\\
&=\sum_{k=0}^{l+1} (-1)^{l} R_{l+1-k}^{g\upsilon_0\beta'\tau_{l+1-k}}\pi_{\upsilon_0\beta'\sigma_k}-
\sum_{i=0}^l (-1)^{i}\pi_{\upsilon_0\beta'\delta_i}
\end{align*}
Note that $\upsilon_0\beta'\sigma_k$ is not injective if $k>0$ and $\upsilon_0\beta'\delta_i$ is not injective if $i>1$. 
If $l=0$, then $\beta=\sigma_0$ and
$\pi_\beta d_0u_0=
R^{g\upsilon_0\beta'\tau_1}_{1}\pi_{\upsilon_0\beta'\sigma_0}=
\pi_{\beta}$,
for $g\upsilon_0\beta'\tau_1=g\upsilon_0\sigma_2$ is an identity and $R$ satisfies RH1).
If $l>0$ then 
$\pi_\beta d_0u_0=
R_{l+1}^{g\upsilon_0\beta'}\pi_{\upsilon_0\beta'\sigma_0}+\pi_{\upsilon_0\beta'\delta_1}=\pi_\beta$, for $\upsilon_0\beta'\delta_1=\beta$, the first edge of 
$g\upsilon_0\beta'$ is degenerate, and $R$ satisfies RH1).
\end{proof}

\begin{proof}[Proof of $d_0d_1=d_0d_0$]
We show the identity for homogeneous vectors, namely $\pi_\beta d_0d_0(e,\alpha,g)=\pi_\beta d_0d_1(e,\alpha,g)$. 
We split it into several cases, depending on how $\alpha$ and $\beta$ relate.

\begin{enumerate}[(i)]
    \item {\bf $\alpha=\beta''\sigma_k$, $k\geq 1$:} 
In this case $1\in\alpha$ and $d_1(e,\alpha,g)=0$, so we must prove that $\pi_\beta d_0d_0(e,\alpha,g)=0$. 
We can write $\pi_\beta d_0d_0(e,\alpha,g)$
as the sum of the terms of type I-I, where some points on the left are added to the index twice; plus the sum of the terms of type I-II, where first every dot but one is added and then the remaining one. 
$$\begin{tikzpicture}[scale=.9]
\draw[thick](6.5,0)node[below] {\scriptsize$0$} --(0,0) node[below] {\scriptsize$n$};
\draw[fill] (6.5,0.2) circle (1.8pt);
\draw[fill] (6,0.2) circle (1.8pt);
\draw (6.9,0.2) node {\scriptsize $\alpha$};
\draw[fill] (5.5,0.2) circle (1.8pt);
\draw[fill] (4.5,0.2) circle (1.8pt);
\draw (6.5,0.45) circle (1.8pt);
\draw[fill,gray] (6,0.45) circle (1.8pt);
\draw (6.9,0.45) ;
\draw[fill,gray] (5.5,0.45) circle (1.8pt);
\draw[fill,gray] (4.5,0.45) circle (1.8pt);
\draw[fill,gray] (3.5,0.45) circle (1.8pt);
\draw[fill,gray] (3,0.45) circle (1.8pt);
\draw[fill,gray] (4,0.45) circle (1.8pt);
\draw (6.5,0.7) circle (1.8pt);
\draw (6,0.7) circle (1.8pt);
\draw (6.9,0.7) node {\scriptsize $\beta''$};
\draw[fill,lightgray] (5.5,0.7) circle (1.8pt);
\draw[fill,lightgray] (4.5,0.7) circle (1.8pt);
\draw[fill,lightgray] (4,0.7) circle (1.8pt);
\draw[fill,lightgray] (3.5,0.7) circle (1.8pt);
\draw[fill,lightgray] (3,0.7) circle (1.8pt);
\draw[fill,lightgray] (2,0.7) circle (1.8pt);
\draw[fill,lightgray] (1.5,0.7) circle (1.8pt);
\draw (6,0) node[below] {\scriptsize $1$};
\draw (5.5,0) node[below] {\scriptsize$2$};
\draw (3,0) node[below] {\scriptsize $\beta''(r+k)$};
\draw (4.5,0) node[below] {\scriptsize $\alpha(k)$};
\draw (1.5,0) node[below] {\scriptsize $\beta''(l+2)$};
\end{tikzpicture}\qquad 
\begin{tikzpicture}[scale=.9]
\draw[thick](6.5,0)node[below] {\scriptsize$0$} --(0,0) node[below] {\scriptsize$n$};
\draw[fill] (6.5,0.2) circle (1.8pt);
\draw[fill] (6,0.2) circle (1.8pt);
\draw (6.9,0.2) node {\scriptsize $\alpha$};
\draw[fill] (5.5,0.2) circle (1.8pt);
\draw[fill] (4.5,0.2) circle (1.8pt);
\draw (6.5,0.45) circle (1.8pt);
\draw[fill,gray] (6,0.45) circle (1.8pt);
\draw[fill,gray] (5.5,0.45) circle (1.8pt);
\draw[fill,gray] (4.5,0.45) circle (1.8pt);
\draw[fill,gray] (4,0.45) circle (1.8pt);
\draw[fill,gray] (3.5,0.45) circle (1.8pt);
\draw[fill,gray] (2,0.45) circle (1.8pt);
\draw[fill,gray] (1.5,0.45) circle (1.8pt);
\draw (6.5,0.7) circle (1.8pt);
\draw (6,0.7) circle (1.8pt);
\draw (6.9,0.7) node {\scriptsize $\beta''$};
\draw[fill,lightgray] (5.5,0.7) circle (1.8pt);
\draw[fill,lightgray] (4.5,0.7) circle (1.8pt);
\draw[fill,lightgray] (4,0.7) circle (1.8pt);
\draw[fill,lightgray] (3.5,0.7) circle (1.8pt);
\draw[fill,lightgray] (3,0.7) circle (1.8pt);
\draw[fill,lightgray] (2,0.7) circle (1.8pt);
\draw[fill,lightgray] (1.5,0.7) circle (1.8pt);
\draw (6,0) node[below] {\scriptsize$1$};
\draw (5.5,0) node[below] {\scriptsize$2$};
\draw (3,0) node[below] {\scriptsize $\beta''(k+i)$};
\draw (4.5,0) node[below] {\scriptsize $\alpha(k)$};
\draw (1.5,0) node[below] {\scriptsize $\beta''(l+2)$};
\end{tikzpicture}$$
Writing $m=l+2-k$ and $\tilde g=g\beta''\tau_m$, 
and using the identities
$\delta_0\beta'\tau_{m}=\beta''\tau_{m}$, $\sigma_{ r+k}\tau_{r}=\tau_{m}\sigma_{r}$, and $\delta_{i+1}\tau_{m-1}=\tau_{m}\delta_{i+1-k}$, 
the sums of the terms of type I-I and I-II are, respectively, 
$$
\sum_{r=0}^{m} (-1)^{l} R_{m-r}^{\tilde g\tau_{m-r}} (-1)^{k+r-1} R_{r}^{\tilde g\sigma_{ r}}(e) 
\quad\text{ and }\quad
\sum_{i=1}^{m-1} (-1)^{i+k}(1)^l R_{m-1}^{\tilde g\delta_{i}}(e),
$$
and the two sums cancel out by axiom RH2) of a representation up to homotopy. 

\item $\alpha=\beta''\sigma_0$: 
In this case $k=0$ and $1\notin\alpha=\sigma_0$.
Writing $\tilde g=g\beta''$, $m=l+2$ and using that $\delta_1\beta'\tau_{l+1}=\delta_1\beta'=\beta''\delta_1$ we get $\pi_\beta d_0d_1(e,\sigma_0,g)=\pi_\beta d_0(e,\sigma_0,g\delta_1)=
(-1)^l R_{m-1}^{\tilde g\delta_1}(e)$. 
The computation of $\pi_\beta d_0d_0 (e,\sigma_0,g)$ is similar as before, we group the terms of type I-I on one side, just that $r\neq0$, for we must add up at least one dot, and we group the terms of type I-II on the other side, just that $i\neq1$, for the missing dot cannot be at position 1. 
The missing term of type I-I vanishes for $\deg R^{\tilde g\sigma_0}_0(e)=-1$, and the missing term of type I-II is exactly $-\pi_\beta d_0d_1(e,\sigma_0,g)$, so the identity follows.

\item $\alpha=\beta''\sigma_{k+1}\delta_i=\beta''\delta_i\sigma_k$, $1<i\leq k$: 
Again $1\in\alpha$, $\pi_\beta d_1(e,\alpha,g)=0$, and we have to show that $\pi_\beta d_0d_0(e,\alpha,g)=0$.
In this case, there is only one term of type I-II and only one of type II-I.
They only differ on the sign and therefore they cancel out.
$$\begin{tikzpicture}[scale=0.9]
\draw[thick](6.5,0)node[below] {\scriptsize $0$} --(0,0) node[below] {\scriptsize $n$};
\draw[fill] (6.5,0.2) circle (1.8pt);
\draw[fill] (6,0.2) circle (1.8pt);
\draw (6.9,0.2) node {\scriptsize $\alpha$};
\draw[fill] (5.5,0.2) circle (1.8pt);
\draw[fill] (4.0,0.2) circle (1.8pt);
\draw[fill] (3.5,0.2) circle (1.8pt);
\draw (6.5,0.7) circle (1.8pt);
\draw (6,0.7) circle (1.8pt);
\draw (6.9,0.7) node {\scriptsize $\beta''$};
\draw[fill,lightgray] (5.5,0.7) circle (1.8pt);
\draw[fill,lightgray] (4.5,0.7) circle (1.8pt);
\draw[fill,lightgray] (4,0.7) circle (1.8pt);
\draw[fill,lightgray] (3.5,0.7) circle (1.8pt);
\draw[fill,lightgray] (3,0.7) circle (1.8pt);
\draw[fill,lightgray] (2,0.7) circle (1.8pt);
\draw[fill,lightgray] (1.5,0.7) circle (1.8pt);
\draw (6.5,0.45) circle (1.8pt);
\draw[fill,gray] (6,0.45) circle (1.8pt);
\draw[fill,gray] (5.5,0.45) circle (1.8pt);
\draw[fill,gray] (4.5,0.45) circle (1.8pt);
\draw[fill,gray] (4,0.45) circle (1.8pt);
\draw[fill,gray] (3.5,0.45) circle (1.8pt);
\draw (6,0) node[below] {\scriptsize$1$};
\draw (5.5,0) node[below] {\scriptsize$2$};
\draw (3.5,0) node[below] {\scriptsize $\alpha(k)$};
\draw (4.5,0) node[below] {\scriptsize $\beta''(i)$};
\draw (1.5,0) node[below] {\scriptsize $\beta''(l+2)$};
\end{tikzpicture}
\qquad 
\begin{tikzpicture}[scale=0.9]
\draw[thick](6.5,0)node[below] {\scriptsize $0$} --(0,0) node[below] {\scriptsize $n$};
\draw[fill] (6.5,0.2) circle (1.8pt);
\draw[fill] (6,0.2) circle (1.8pt);
\draw (6.9,0.2) node {\scriptsize $\alpha$};
\draw[fill] (5.5,0.2) circle (1.8pt);
\draw[fill] (4.0,0.2) circle (1.8pt);
\draw[fill] (3.5,0.2) circle (1.8pt);
\draw (6.5,0.45) circle (1.8pt);
\draw[fill,gray] (6,0.45) circle (1.8pt);
\draw[fill,gray] (5.5,0.45) circle (1.8pt);
\draw[fill,gray] (3,0.45) circle (1.8pt);
\draw[fill,gray] (4,0.45) circle (1.8pt);
\draw[fill,gray] (3.5,0.45) circle (1.8pt);
\draw[fill,gray] (2,0.45) circle (1.8pt);
\draw[fill,gray] (1.5,0.45) circle (1.8pt);
\draw (6.5,0.7) circle (1.8pt);
\draw (6,0.7) circle (1.8pt);
\draw (6.9,0.7) node {\scriptsize $\beta''$};
\draw[fill,lightgray] (5.5,0.7) circle (1.8pt);
\draw[fill,lightgray] (4.5,0.7) circle (1.8pt);
\draw[fill,lightgray] (4,0.7) circle (1.8pt);
\draw[fill,lightgray] (3.5,0.7) circle (1.8pt);
\draw[fill,lightgray] (3,0.7) circle (1.8pt);
\draw[fill,lightgray] (2,0.7) circle (1.8pt);
\draw[fill,lightgray] (1.5,0.7) circle (1.8pt);
\draw (6,0) node[below] {\scriptsize 1};
\draw (5.5,0) node[below] {\scriptsize 2};
\draw (3.5,0) node[below] {\scriptsize $\alpha(k)$};
\draw (4.5,0) node[below] {\scriptsize $\beta''(i)$};
\draw (1.5,0) node[below] {\scriptsize $\beta''(l+2)$};
\end{tikzpicture}
$$

\item $\alpha=\beta''\delta_j\delta_i=\beta''\delta_{i}\delta_{j-1}$, $1<i<j\leq k+1$: As in the previous case, $1\in \alpha$, $\pi_\beta d_1(e,\alpha,g)=0$, and $\pi_\beta d_0d_0(e,\alpha,g)$ has exactly two non-vanishing contributions of type II-II, corresponding to add up first one of the missing dots and then the other. They agree except for the sign and they cancel each other.
$$\begin{tikzpicture}[scale=0.9]
\draw[thick](6.5,0)node[below] {\scriptsize $0$} --(0,0) node[below] {\scriptsize $n$};
\draw[fill] (6.5,0.2) circle (1.8pt);
\draw[fill] (6,0.2) circle (1.8pt);
\draw (6.9,0.2) node {\scriptsize $\alpha$};
\draw[fill] (5.5,0.2) circle (1.8pt);
\draw[fill] (4.0,0.2) circle (1.8pt);
\draw[fill] (3.5,0.2) circle (1.8pt);
\draw[fill] (2,0.2) circle (1.8pt); 
\draw[fill] (1.5,0.2) circle (1.8pt); 
\draw (6.5,0.7) circle (1.8pt);
\draw (6,0.7) circle (1.8pt);
\draw (6.9,0.7) node {\scriptsize $\beta''$};
\draw[fill,lightgray] (5.5,0.7) circle (1.8pt);
\draw[fill,lightgray] (4.5,0.7) circle (1.8pt);
\draw[fill,lightgray] (4,0.7) circle (1.8pt);
\draw[fill,lightgray] (3.5,0.7) circle (1.8pt);
\draw[fill,lightgray] (3,0.7) circle (1.8pt);
\draw[fill,lightgray] (2,0.7) circle (1.8pt);
\draw[fill,lightgray] (1.5,0.7) circle (1.8pt);
\draw (6.5,0.45) circle (1.8pt);
\draw[fill,gray] (6,0.45) circle (1.8pt);
\draw[fill,gray] (5.5,0.45) circle (1.8pt);
\draw[fill,gray] (4.5,0.45) circle (1.8pt);
\draw[fill,gray] (4,0.45) circle (1.8pt);
\draw[fill,gray] (3.5,0.45) circle (1.8pt);
\draw[fill,gray] (2,0.45) circle (1.8pt);
\draw[fill,gray] (1.5,0.45) circle (1.8pt);
\draw (6,0) node[below] {\scriptsize$1$};
\draw (5.5,0) node[below] {\scriptsize$2$};
\draw (3,0) node[below] {\scriptsize $\beta''(j)$};
\draw (4.5,0) node[below] {\scriptsize $\beta''(i)$};
\draw (2,0) node[below] {\scriptsize $\alpha(k)$};
\end{tikzpicture}
\qquad 
\begin{tikzpicture}[scale=0.9]
\draw[thick](6.5,0)node[below] {\scriptsize $0$} --(0,0) node[below] {\scriptsize $n$};
\draw[fill] (6.5,0.2) circle (1.8pt);
\draw[fill] (6,0.2) circle (1.8pt);
\draw (6.9,0.2) node {\scriptsize $\alpha$};
\draw[fill] (5.5,0.2) circle (1.8pt);
\draw[fill] (4.0,0.2) circle (1.8pt);
\draw[fill] (3.5,0.2) circle (1.8pt);
\draw[fill] (2,0.2) circle (1.8pt); 
\draw[fill] (1.5,0.2) circle (1.8pt); 
\draw (6.5,0.7) circle (1.8pt);
\draw (6,0.7) circle (1.8pt);
\draw (6.9,0.7) node {\scriptsize $\beta''$};
\draw[fill,lightgray] (5.5,0.7) circle (1.8pt);
\draw[fill,lightgray] (4.5,0.7) circle (1.8pt);
\draw[fill,lightgray] (4,0.7) circle (1.8pt);
\draw[fill,lightgray] (3.5,0.7) circle (1.8pt);
\draw[fill,lightgray] (3,0.7) circle (1.8pt);
\draw[fill,lightgray] (2,0.7) circle (1.8pt);
\draw[fill,lightgray] (1.5,0.7) circle (1.8pt);
\draw (6.5,0.45) circle (1.8pt);
\draw[fill,gray] (6,0.45) circle (1.8pt);
\draw[fill,gray] (5.5,0.45) circle (1.8pt);
\draw[fill,gray] (4,0.45) circle (1.8pt);
\draw[fill,gray] (3.5,0.45) circle (1.8pt);
\draw[fill,gray] (3,0.45) circle (1.8pt);
\draw[fill,gray] (2,0.45) circle (1.8pt);
\draw[fill,gray] (1.5,0.45) circle (1.8pt);
\draw (6,0) node[below] {\scriptsize$1$};
\draw (5.5,0) node[below] {\scriptsize$2$};
\draw (3,0) node[below] {\scriptsize $\beta''(j)$};
\draw (4.5,0) node[below] {\scriptsize $\beta''(i)$};
\draw (2,0) node[below] {\scriptsize $\alpha(k)$};
\end{tikzpicture}$$

\item $\alpha=\beta''\sigma_{k+1}\delta_1$, $k\geq 1$
or $\alpha=\beta''\delta_j\delta_1$, $k>0$:
In these cases $\alpha(1)>1$, $\alpha\neq0$ and we have
$\pi_\beta d_0d_1(e,\alpha,g)=\pi_\beta d_0(e,\sigma_0\alpha,g\delta_0)$ and $\pi_\beta d_0d_0(e,\alpha,g)=\pi_\beta d_0(e,\sigma_0\alpha,g\delta_1)$. These expressions agree because, by definition, $\pi_\beta d_0$ only depend on $g\tau_{m-k}$ rather than $g$.\qedhere
\end{enumerate}
\end{proof}


We are ready to show that the semi-direct product $(G\ltimes_,d_i,u_j)$ is a higher vector bundle equipped with a canonical cleavage. 
Recall that $\iota_n:[n]\to[n]$ denotes the identity. We write $C^{can}\subset (G\ltimes_R E)_n$ for the sub-bundle of vectors whose $\iota_n$ component vanishes:
$$C^{can}_n=\{v:\pi_{\iota_n}(v)=0\}\subset (G\ltimes_R E)_n= \bigoplus_{\substack{[k]\xto\alpha[n] \\ \alpha(0)=0}} x_{\alpha(k)}^*E_k$$

\begin{theorem}\label{thm:sdp}
Given $G$ a higher Lie groupoid and $R: G\action E=\oplus_{n=0}^N E_n$ a  representation up to homotopy, the semi-direct product $q: G\ltimes_R E\to G$ is a higher vector bundle of order $N$ and core $E$, and $C^{can}_n\subset (G\ltimes_R E)_n$ is a normal weakly flat cleavage.
\end{theorem}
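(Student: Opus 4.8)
The plan is to leverage Proposition \ref{prop:simplicial-identities}, which already establishes that $q\colon G\ltimes_R E\to G$ is a simplicial vector bundle, and then to verify successively: normality of $C^{can}$, the identification of the core, the fibration property, that $C^{can}$ is a cleavage, weak flatness, and order $N$. Normality is immediate from Definition \ref{def:sdp-faces-degeneracies}: a degenerate simplex $u_j(v)$ has $\pi_{\iota_n}u_j(v)=\pi_{\upsilon_j\iota_n}(v)=\pi_{\upsilon_j}(v)=0$ since $\upsilon_j$ is not injective, so $u_j(v)\in C^{can}_n$. For the core, restrict $q$ to the unit groupoid $G_0\subset G$: every simplex of the nerve of $G_0$ is degenerate, so $R|_{G_0}$ is just the chain complex $(E,R_0)$, and by Example \ref{example:sdp1}(a) we get $(G\ltimes_R E)|_{G_0}\cong DK(E,R_0)$; hence the core of $q$, being the Dold--Kan normalization of this restriction, is $(E,R_0)$. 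In particular $E_n=0$ for $n>N$, which will give order $N$ once $q$ is known to be a higher vector bundle.

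To show $q$ is a higher vector bundle I would use Proposition \ref{prop:equiv-def}(iii): it suffices that $V:=G\ltimes_R E$ be a set-theoretic higher groupoid and that $d_0,d_1\colon V_1\to V_0$ be fiber-wise surjective over $d_0,d_1\colon G_1\to G_0$. The latter is a short computation: $V_1=x_0^*E_0\oplus x_1^*E_1$, $V_0=E_0$, the face $d_1$ is the projection to $x_0^*E_0$, and by Definition \ref{def:sdp-faces-degeneracies} $d_0$ sends $(e_0,e_1)$ over $y\xfrom g x$ to $R_1^g(e_0)+R_0^y(e_1)$, whose image is $\im(R_1^g|_{E_0})+B_0^y=E_0^y$ because $R_1^g$ is a chain homotopy equivalence and hence induces an isomorphism $H_0^x\to H_0^y$. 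For the higher-groupoid property one must fill an arbitrary $(n,k)$-horn $(w_i)_{i\ne k}$ in $V$ lying over a simplex $g\in G_n$ (obtained by filling the image horn in the Lie groupoid $G$). Using $\pi_\beta d_i=\pi_{\delta_i\beta}$ for $i>0$, the equations $d_i(v)=w_i$ with $i\in\{1,\dots,n\}\setminus\{k\}$ force $\pi_\alpha(v)=\pi_{\upsilon_i\alpha}(w_i)$ for every $\alpha$ missing such an $i$; horn-compatibility of $(w_i)$ (the simplicial identities $d_aw_b=d_{b-1}w_a$) makes these prescriptions consistent and determines $\pi_\alpha(v)$ for all $\alpha\ne\iota_n$ except $\alpha=\delta_k$ (when $k\ge 1$). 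For a left horn $k=0$ there is no $d_0$-equation, so setting $\pi_{\iota_n}(v)=0$ finishes the filling. For an inner horn $0<k<n$ one sets $\pi_{\iota_n}(v)=0$, reads off $\pi_{\delta_k}(v)$ from the $\pi_{\iota_{n-1}}$-component of $d_0(v)=w_0$ (in which it occurs with coefficient $\pm1$), and checks that the remaining components of $d_0(v)=w_0$ hold automatically; by the formulas of Remark \ref{rmk:alt-formulas} this last check runs along exactly the case analysis used for $d_0d_1=d_0d_0$ in Proposition \ref{prop:simplicial-identities}, combining the simplicial identities for the other faces with RH2). For a right horn $k=n$ (which contains $d_0$) one argues as in the level-$1$ computation, using again that $R_1$ is a chain homotopy equivalence to solve the residual equations that constrain $\pi_{\iota_n}(v)$ and $\pi_{\delta_n}(v)$ only through $R_0$.

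With $q$ known to be a higher vector bundle, the remaining claims are short. For $k<n$ the map $d^q_{n,k}\colon C^{can}_n\to V_{n,k}\times_{G_{n,k}}G_n$ is injective: if $v\in C^{can}_n$ has $d_i(v)=0$ for $i\ne k$, the argument above gives $\pi_\alpha(v)=0$ for $\alpha\ne\iota_n,\delta_k$, and then $\pi_{\delta_k}(v)=0$ from the $\pi_{\iota_{n-1}}$-component of $d_0(v)=0$ (the case $k=0$ having no $\delta_k$ index). Since $V_{n,k}\to G_{n,k}$ is now a vector bundle (Lemma \ref{lemma:smooth-horns}) of rank $\sum_{j<n}\binom nj\rk E_j$ by Remark \ref{rmk:filtration} together with the core computation — which is exactly $\rk C^{can}_n$ — the injective bundle map $d^q_{n,k}|_{C^{can}_n}$ covering $\id_{G_n}$ is an isomorphism, so $C^{can}$ is a normal cleavage. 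Weak flatness is a one-line computation mirroring the end of the proof of Lemma \ref{lemma:dk-groupoid}: if $w\in C^{can}_n$ with $s_0(w)=0$, $s_k(w)\in C^{can}$ for all $k$ and $d_i(w)\in C^{can}$ for all $i>0$, then $\pi_{\sigma_j}(w)=0$ for all $j$ and $\pi_{\delta_i}(w)=0$ for $i>0$, whence $\pi_{\iota_{n-1}}d_0(w)=\sum_j(-1)^{n-1}R^{g\tau_{n-j}}_{n-j}\pi_{\sigma_j}(w)-\sum_{i>0}(-1)^i\pi_{\delta_i}(w)=0$, i.e.\ $d_0(w)\in C^{can}_{n-1}$. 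Finally $q$ has order $N$ because its core vanishes above degree $N$.

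The one genuinely laborious step is the horn-filling verification inside the proof of the higher-groupoid property, namely the consistency of the equation $d_0(v)=w_0$ for inner and right horns: for inner horns this reduces to the same combinatorics (driven by RH2)) that already appears in Proposition \ref{prop:simplicial-identities}, while the right-horn case $k=n$ additionally needs the chain-homotopy-equivalence property of $R_1$ and is the subtlest point. Everything else is either formal or a direct rank count.
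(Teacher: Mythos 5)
Your architecture is genuinely different from the paper's: you prove the fibration property by explicitly constructing horn fillers and checking consistency, whereas the paper never fills a horn directly --- it shows (by a minimal-$n$ argument feeding Remark \ref{rmk:filtration}) that $V_{n,k}$ is a vector bundle of rank $\rk V_n-\rk E_n$, computes that $\ker d^q_{n,k}$ has fiberwise dimension at most $\rk E_n$ (using, for $k=n$, the cycle/boundary analysis and Lemma \ref{lemma:cycles-borders}), and concludes surjectivity by a pure rank count, which sidesteps all compatibility verifications. Your treatment of normality, the core, weak flatness, the level-$1$ surjectivity, and the injectivity-plus-rank-count argument that $C^{can}$ is a cleavage are all fine and essentially parallel to the paper.

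The gap is at the heart of your fibration argument, where the decisive verifications are asserted rather than carried out, and the pointers you give would not suffice as written. For an inner horn $0<k<n$, the identity $d_jd_0=d_0d_{j+1}$ together with the horn relations does dispose of every component $\pi_\beta$ of $d_0(v)=w_0$ with $\beta$ missing some positive $j\neq k-1$; but the single component $\beta=\delta_{k-1}$ (for $k\geq 2$) cannot be reached this way, because the needed relation $d_{k-1}w_0=d_0w_k$ involves the missing face, and after you solve the $\iota_{n-1}$-component for $\pi_{\delta_k}(v)$, the $\delta_{k-1}$-component only sees $\pi_{\delta_k}(v)$ through $R_0$ (and $R_0^2=0$ kills the substituted term), so it is a genuine consistency condition on the prescribed data that must be verified using RH2) together with the as-yet-unused horn relations --- it is not literally "the case analysis used for $d_0d_1=d_0d_0$", which involves a single simplex rather than horn data. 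The right horn $k=n$ is worse than your sketch suggests: besides the $\iota_{n-1}$-component, the component $\beta=\delta_{n-1}=\sigma_{n-2}$ also escapes the formal trick, and the two residual equations constrain the two free components $\pi_{\iota_n}(v)\in E_n^{x_n}$ and $\pi_{\delta_n}(v)\in E_{n-1}^{x_{n-1}}$ through the non-surjective map $(e,e')\mapsto(\pm R_0e\pm R_1^{g_n}e',\pm R_0e')$, so solvability requires showing the known right-hand side lies in its image; this needs the $Z$, $B$, $H$ analysis (and the constancy of cycle dimensions along arrows, Lemma \ref{lemma:cycles-borders}, or an equivalent), not just the phrase "$R_1$ is a chain homotopy equivalence" applied as at level $1$, where no second residual equation exists. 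The claims you leave unproved are in fact true --- the paper's theorem guarantees a filler with vanishing $\iota_n$-component --- but your proposal does not prove them; alternatively, note that the rank count you already invoke for the cleavage (together with the kernel computation) proves surjectivity directly, which is exactly the paper's route.
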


\begin{proof}
We have shown that $q$ is a simplicial vector bundle in Proposition 
 \ref{prop:simplicial-identities}.
Let us show now that the relative horn maps $d_{n,k}^q:(G\ltimes_RE)_n\to d_{n,k}^*(G\ltimes_RE)_{n,k}$ are fiber-wise surjective over $G_n$. We have that $d_{1,0}^q:d_0^*E_1\oplus d_1^*E_0\to d_1^*E_0$ is just the projection and is therefore surjective. 
And $d_{1,1}^q:d_0^*E_1\oplus d_1^*E_0\to d_0^*E_0$,  given by $(e_1,10,g)+(e_0,0,g)\mapsto (R^{t(g)}_0(e_1)+R^g_1(e_0),0,t(g))$,
is also fiber-wise surjective, for $R^g_1$ is a quasi-isomorphism.

Suppose now that $G\ltimes_RE$ is not a higher vector bundle, and take $d_{n,k}^q$ a relative horn map that is not surjective, with $n>1$ minimal.
It follows from the inductive argument in Lemma \ref{lemma:smooth-horns} that $(G\ltimes_RE)_{n,k}$ is a well-defined vector bundle of rank $\rk(G\ltimes_RE)_n-\rk E_n$. Let us show that $\ker d_{n,k}^q$ has fiber-wise dimension $\rk E_n$, which contradicts that 
$d^q_{n,k}$ is not surjective.

If $v\in \ker d_{n,k}^q$ then $d_i(v)=0$ for $i\neq k$ and $v$ is supported on the indices containing $\delta_k\subset[n]$. 
If $k=0$ then $\ker d^q_{n,0}=x_n^*E_n$, for $v\in\ker d^q_{n,0} $ must be homogeneous of type $\iota_n$. 
If $0<k<n$ then a vector in $\ker d^q_{n,k}$ is of the form $v=(e,\iota_n,g)+(e',\delta_k,g)$ and we have 
    $0=\pi_{\iota_{n-1}}d_0(v)=(-1)^{k+1}e'+(-)^{n-1} R_0(e)=0$, so $e\in x_n^*E_n$ is arbitrary and determines $e'$ completely.
In either case, we have $\rk\ker d_{n,k}^q=\rk E_n$. It remains to consider the case $k=n$. 

To ease the notations, write $y'\xfrom g' x'=t_1(g)=g\tau_1$, and consider $v=(e,\iota_n,g)+(e',\delta_n,g)$ a vector in $\ker d^q_{n,n}$, with $e\in E^{y'}_n$ and $e'\in E^{x'}_{n-1}$.
We have $0=\pi_{\iota_{n-1}}d_0(v)=(-1)^{n-1}R^{g'}_1(e')+R_0^{y'}(e)$ and $0=\pi_{\delta_{n-1}}(v)=R_0^{x'}(e')$. It follows that $e'\in Z^{x'}_{n-1}$ is a cycle in the fiber and therefore $R^{g'}_1(e')\in Z^{y'}_{n-1}$. Then $\ker d_{n,n}^q$ identifies with the kernel of 
    $$Z^{x'}_{n-1}\oplus E^{y'}_n\to Z^{y'}_{n-1} \qquad (e',e)\mapsto (-1)^{n-1}R^{g'}_1(e')+R_0(e).$$
This is an epimorphism, for  $0\oplus E^{y'}_n$ covers $B^{y'}_{n-1}$, and since $R^{g'}_1$ is a quasi-isomorphism, $Z^{x'}_{n-1}\to Z^{y'}_{n-1}/B^{y'}_{n-1}=H^{y'}_{n-1}$ is surjective. 
    By Lemma \ref{lemma:cycles-borders} $\dim Z^{x'}_{n-1}=\dim Z^{y'}_{n-1}$ and we are done.

The descriptions we gave for the kernels $\ker d^q_{n,k}$, $k<n$, show that $C^{can}_n=\ker \pi_{\iota_n}$ is indeed a complement for $\ker d_{n,k}$, namely a cleavage. And is normal and weakly flat by Remark \ref{rmk:alt-formulas}.
\end{proof}

\begin{example} \label{example:sdp2}
Let $G$ be a Lie groupoid. We discuss now the compatibility of faces and degeneracies with the levelwise isomorphisms defined in Example \ref{example:sdp1}:
\begin{enumerate}[a)]
    \item When $G=M$ is a unit groupoid, the isomorphisms preserve the positive faces and degeneracies, and the 0-face up to a sign. Thus we get simplicial vector bundle isomorphisms
    $$(M\ltimes_RE)\cong DK(E,(-1)^{n-1}\partial)\cong DK(E,\partial).$$
    The second isomorphism is induced by $\epsilon_n:E_n\to E_n$, $\epsilon_n(e)=(-1)^{\binom{n}{2}}e$. 
    Note that when defining $d_0$, the case $\alpha=\beta'\delta_n$ is of type II in Dold-Kan and of type I in the semi-direct product, but they agree up to a sign because in this case $R_1=\id$;
    \item When $N=0$, the isomorphisms $(G\ltimes_RE)_n\cong Tr(R)_n$ commute with faces and degeneracies, hence yielding a higher vector bundle isomorphism
    $$(G\ltimes_R E)\cong Tr(R).$$
    Since they are nerves of groupoids, it is enough to show that source, target, multiplication and units are preserved, see Remark \ref{rmk:1-groupoid}. The $E$-valued cochains $(C(G,E),D)$ identify with the linear cochains in the semi-direct product $(C_{lin}(G\ltimes_RE),\delta)$;
    \item When $N=1$, the isomorphisms $(G\ltimes_RE)_n\cong Gr(R)_n$ commutes with every face and degeneracy. They are nerves of groupoids and we need to see that the structure maps are preserved. This is immediate for the source, target and unit. For the multiplication, given $v=(c_2,c_1,e,g)\in (G\ltimes_RE)_2$, with $e\in x_0^*E_0$ and $c_i\in x_i^*E_1$, we have
    \begin{align*}
    d_2(v)&=(c_1,e,g_1) \\
    d_1(v)&=(c_2,e,g_2g_1) \\
    d_0(v)&=(c_2-R^{g_2}_1(c_1)-R_2^{g}(e),R^{g_1}_1(e)+R^{x_1}_0(c_1),g_2)  
    \end{align*}
    Writing $d_1(v)=d_0(v)\cdot d_2(v)$, and comparing with the multiplication of the Grothendieck construction $Gr(R)$ recalled in Remark \ref{rmk:grothendieck}, we get a higher vector bundle isomorphism
    $$(G\ltimes_RE)\cong Gr(R).$$
\end{enumerate}
\end{example}



\begin{remark}\label{rmk:converse-thm1}
We have seen that the axioms RH1) and RH2) imply the simplicial identities of the semi-direct product. Conversely, let $G$ be a higher Lie groupoid,  $E=\bigoplus_{n=0}^N E_n$ a graded vector bundle over $G_0$, and  $R_m\in\Gamma(G_m;\hom^{m-1}(s_0^*E,t_0^*E))$ a sequence of arbitrary operators. Define $G\ltimes E$, $d_i$ and $u_j$ as before. Every simplicial identity other than $d_0u_0=\id$ and $d_0d_0=d_0d_1$ hold for arbitrary $R$. Our proof of Proposition \ref{prop:simplicial-identities} actually shows that:
\begin{enumerate}[a)]
    \item $d_0u_0=\id$ if and only if $R_1^{u(x)}|_{E_0}=\id$ and $R_{l+1}^{u_0(g)}|_{E_0}=0$; and
    \item $d_0d_0=d_0d_1$ if and only if RH2) holds.
\end{enumerate}
We cannot recover the axiom RH1) out of the simplicial identities. 
This axiom will follow from the fact that the cleavage $C^{can}$ must be normal and weakly flat, see Theorem \ref{thm:splitting}.
\end{remark}




After showing that the semi-direct product yields a well-defined higher vector bundle $(G\ltimes_RE)\to G$ endowed with a distinguished weakly flat cleavage, we study now the functoriality.

\begin{proposition}\label{prop:functoriality}
A morphism of representations up to homotopy  $\psi:(R:G\action E)\to (R':G\action E')$ induces a weakly flat morphism of higher vector bundles by the formula
$$\psi^\wedge:(G\ltimes_RE)\to(G\ltimes_{R'}E') \qquad
\pi_\beta \psi^\wedge = \sum_{r=0}^l  \psi^{g\beta\tau_{l-r}}_{l-r}\pi_{\beta\sigma_r}$$
The construction is functorial. 
Moreover, $\psi^\wedge$ is flat if and only if $\psi$ is a strict morphism, and $\psi^\wedge$ is the identity on the core if and only if $\psi$ is a gauge equivalence.
\end{proposition}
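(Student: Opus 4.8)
The plan is to verify, in this order, that $\psi^\wedge$ is a morphism of simplicial vector bundles over $G$ (and hence of higher vector bundles, as source and target are such by Theorem \ref{thm:sdp}), that it is weakly flat, that $\psi\mapsto\psi^\wedge$ is functorial, and finally the two ``if and only if'' clauses; the first item is where essentially all the work lies.

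\emph{Step 1: $\psi^\wedge$ is simplicial.} It is fibrewise linear and covers $\id_G$, so the content is commutation with the operators of Definition \ref{def:sdp-faces-degeneracies}. Writing $\theta^*$ for the operator with $\pi_\beta\theta^*=\pi_{\theta\beta}$ (with the convention $\pi_\gamma=0$ for $\gamma$ non-injective), so that $d_i=\delta_i^*$ for $i>0$ and $u_j=\upsilon_j^*$, both $\pi_\beta(\theta^*\psi^\wedge)$ and $\pi_\beta(\psi^\wedge\theta^*)$ unwind, by associativity of composition alone, to $\sum_r\psi_{l-r}^{g(\theta\beta)\tau_{l-r}}\pi_{(\theta\beta)\sigma_r}$; hence $\psi^\wedge$ commutes with every positive face and every degeneracy, with no reference to $R$, $R'$ or the axioms. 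The one genuine identity is $d_0\psi^\wedge=\psi^\wedge d_0$. I would establish it by evaluating both sides on a homogeneous vector $(e,\alpha,g)$ and grouping the resulting terms exactly as in the Case I / Case II dichotomy used to define $d_0$: the contributions carrying an $R'$ (from $d_0$ on $G\ltimes_{R'}E'$) assemble together with those carrying a $\psi$ into the left-hand side of the morphism axiom RH4), the contributions of $\psi^\wedge d_0$ carrying a $\psi$ assemble together with those carrying an $R$ (from $d_0$ on $G\ltimes_R E$) into the right-hand side of RH4), and the degenerate terms are accounted for by RH3) --- in complete parallel with the way $d_0d_0=d_0d_1$ reduced to RH2) in Proposition \ref{prop:simplicial-identities}. \emph{This verification is the main obstacle}: it is a sign-and-index chase of the same length and flavour as that proof, to be organized case by case on how $\alpha$ sits inside $\beta'$.

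\emph{Step 2: weak flatness and the two equivalences.} Taking $\beta=\iota_n$ in the defining formula yields the single identity
$$\pi_{\iota_n}\psi^\wedge=\sum_{r=0}^{n}\psi_{n-r}^{g\tau_{n-r}}\,\pi_{\sigma_r},$$
which governs the three remaining points. If $w\in C^{can}_n$ has $s_0(w)=0$ and $s_k(w)\in C^{can}_k$ for $k>0$, i.e.\ $\pi_{\sigma_r}(w)=0$ for every $0\le r\le n$, the right-hand side vanishes, so $\psi^\wedge(w)\in {C'}^{can}_n$ and $\psi^\wedge$ is weakly flat. Flatness asks that $\pi_{\iota_n}\psi^\wedge$ kill $\ker\pi_{\iota_n}=C^{can}_n$ for all $n$; the summand $r=n$ is $\psi_0^{g\tau_n}\pi_{\iota_n}$, which does so automatically, while for each $r<n$ the functional $\pi_{\sigma_r}$ is unconstrained on $C^{can}_n$, so evaluating at a $w$ whose only nonzero component is $\pi_{\sigma_r}(w)$ isolates $\psi_{n-r}$ and shows $\psi^\wedge$ is flat if and only if $\psi_m=0$ for all $m>0$, i.e.\ $\psi$ is strict. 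Finally, by the proof of Theorem \ref{thm:sdp} the core $E_n=\ker d^q_{n,0}|_{G_0}$ is spanned by the type-$\iota_n$ homogeneous vectors over the units of $G$, and on these the displayed identity collapses to $\psi_0\,\pi_{\iota_n}$; hence $\psi^\wedge$ restricts on cores to the degree-$0$ bundle map $\psi_0\colon E\to E'$, which is the identity exactly when $E=E'$ and $\psi_0=\id$, i.e.\ when $\psi$ is a gauge equivalence.

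\emph{Step 3: functoriality.} That $\id^\wedge=\id$ is immediate: for $\psi=\id$ only $\psi_0=\id$ contributes, and $\pi_\beta\psi^\wedge$ collapses to its $r=l$ term $\pi_\beta$. For composites I would read off the component form of the composition of morphisms of representations up to homotopy from Proposition \ref{prop:ruth} (composition of the associated $C(G)$-module maps), substitute it into the formula for $(\psi'\psi)^\wedge$, and match it term by term with $\pi_\beta({\psi'}^\wedge\psi^\wedge)=\sum_r\big({\psi'}_{l-r}^{g\beta\tau_{l-r}}\pi_{\beta\sigma_r}\big)\psi^\wedge$, using once more the multiplicativity $\theta_1'\theta_2'=(\theta_1\theta_2)'$ and the elementary $\sigma,\tau$ identities. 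This last step is routine and far shorter than Step 1.
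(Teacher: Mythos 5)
Your proposal follows the paper's own proof in structure and substance: commutation with positive faces and degeneracies via the functorial formula $\pi_\beta\theta^*=\pi_{\theta\beta}$, the $d_0$-identity reduced to RH4) by evaluating on homogeneous vectors and grouping terms exactly as in the proof of Proposition \ref{prop:simplicial-identities}, weak flatness read off from $\pi_{\iota_n}\psi^\wedge=\sum_r\psi^{g\tau_{n-r}}_{n-r}\pi_{\sigma_r}$, and the strict/gauge characterizations and functoriality treated as routine (the paper likewise dismisses them as straightforward). Two caveats. First, your assertion that commutation with degeneracies holds ``by associativity of composition alone, with no reference to the axioms'' is not correct: when $\theta=\upsilon_j$ and $\upsilon_j\beta$ fails to be injective, the side $\pi_{\upsilon_j\beta}\psi^\wedge$ vanishes by convention, while the other side is $\sum_r\psi^{g\upsilon_j\beta\tau_{l-r}}_{l-r}\pi_{\upsilon_j\beta\sigma_r}$, in which a term can have $\upsilon_j\beta\sigma_r$ injective; it vanishes only because then $\upsilon_j\beta\tau_{l-r}$ is non-injective with $l-r>0$, so $g\upsilon_j\beta\tau_{l-r}$ is degenerate and RH3) kills $\psi_{l-r}$. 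This is precisely where the paper invokes RH3); you have instead misattributed RH3) to the $d_0$ step, which in the paper uses only RH4). Second, the $d_0$-commutation, which you rightly flag as the main obstacle, is only announced, not carried out; the paper does the index computation explicitly (for $\alpha=\beta'\sigma_k$ both sides come out as $(-1)^{k+1}$ times the two sides of RH4) evaluated at $\tilde g=g\beta'\tau_{l+1-k}$), and your proposed grouping is the correct one, so this is a deferral rather than an error.
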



\begin{proof}
Let us check that $\psi^\wedge$ preserves faces and degeneracies. Given $\theta:[m]\to[n]$ preserving 0, 
$$\pi_\beta\theta^*\psi^\wedge=\pi_{\theta\beta}\psi^\wedge=
\sum_{r=0}^l  \psi^{g\theta\beta\tau_{l-r}}_{l-r}\pi_{\theta\beta\sigma_r}=
\sum_{r=0}^l  \psi^{(g\theta)\beta\tau_{l-r}}_{l-r}\pi_{\beta\sigma_r}\theta^*=\pi_\beta\psi^\wedge\theta^*$$
We are using the convention that $\pi_\gamma=0$ whenever $\gamma$ is not injective. Note that if $\theta\beta$ is not injective, either $\theta\beta\sigma_r$ is not injective and $\pi_{\theta\beta\sigma_r}=0$, or ${g\theta\beta\tau_{l-r}}$ is degenerate and $\psi^{g\theta\beta\tau_{l-r}}=0$ by axiom RH3). This shows that $\psi^\wedge$ preserves the positive faces and the degeneracies.

We check now that $\pi_\beta d_0\psi^\wedge=\pi_\beta\psi^\wedge d_0$.
If $(e,\alpha,g)\in (G\ltimes_RE)_n$ is a homogeneous element, then $\pi_\beta \psi^\wedge(e,\alpha,g)=\psi^{g\beta\tau_{l-k}}_{l-k}(e)$ if $\beta\sigma_k=\alpha$ and 0 otherwise. 
Let $(e,\alpha,g)\in (G\ltimes_RE)_n$. Suppose first that $\beta'\sigma_k=\alpha$. Then we have
\begin{align*}
\pi_\beta d_0 \psi^\wedge (e,\alpha,g) 
&= \sum_{r=0}^{l+1}(-1)^l {R'}^{g\beta'\tau_{l+1-r}}_{l+1-r}\pi_{\beta'\sigma_r}\psi^\wedge(e,\alpha,g) - \sum_{i=1}^l (-1)^i\pi_{\beta'\delta_i}\psi^\wedge(e,\alpha,g)\\
&= \sum_{r=k}^{l+1}(-1)^l {R'}^{g\beta'\tau_{l+1-r}}_{l+1-r}\psi_{r-k}^{g\beta'\sigma_r\tau_{r-k}}(e) - \sum_{i=k+1}^l (-1)^i
\psi_{l-k}^{g\beta'\delta_i\tau_{l-k}}(e)\\
&=(-1)^{k+1} \left( \sum_{\tilde r=0}^{\tilde m}(-1)^{\tilde m} {R'}^{\tilde g\tau_{\tilde r}}_{\tilde m-\tilde r}\psi_{\tilde r}^{\tilde g\sigma_{\tilde r}}(e) - \sum_{\tilde i=0}^{\tilde m-1}(-1)^{\tilde i}
\psi_{\tilde m-1}^{\tilde g\delta_{\tilde i}}(e) \right)
\end{align*}
where we are writing $\tilde r=r-k$, $\tilde m=l+1-k$, $\tilde i=i-k$ and $\tilde g=g\beta'\tau_{l+1-k}$. 
And also,
\begin{align*}
\pi_\beta \psi^\wedge d_0 (e,\alpha,g) 
&= \sum_{r=0}^{l}\psi_{l-r}^{(g\delta_0)\beta\tau_{l-r}} \pi_{\beta\sigma_r} d_0 (e,\alpha,g) \\
&= \sum_{r=k-1}^{l} \psi_{l-r}^{(g\delta_0)\beta\tau_{l-r}} 
(-1)^r  R^{g(\beta\sigma_r)'\tau_{r+1-k}}_{r+1-k}(e)\\
&= (-1)^{k+1} \left( 
\sum_{\tilde r=0}^{\tilde m}  (-1)^{\tilde r} \psi_{\tilde m-\tilde r}^{\tilde g\tau_{\tilde m-\tilde r}} 
 R^{\tilde g\sigma_{\tilde r}}_{\tilde r}(e) \right)
\end{align*}
where now $\tilde r=r+1-k$, $\tilde m=l+1-k$ and $\tilde g=g\beta'\tau_{l+1-k}$. The identity follows by axiom RH4). The fact that $\psi^\wedge$ is weakly flat is immediate from the definition: if $s_0(v)=0$ and $s_k(v)\in C$ for every $k$, then $\pi_{\sigma_r}(v)=0$ for every $r$, and therefore $\pi_{\iota}\psi^\wedge(v)=0$ and $\psi^\wedge(v)\in C'$.

The proofs that $\psi\mapsto\psi^\wedge$ is functorial, that $\psi^\wedge$ is flat if and only if $\psi$ is strict, and that $\psi^\wedge$ is the identity on the core if and only $\psi$ is a gauge equivalent, are straightforward.
\end{proof}


\begin{remark}
Summarizing, we can regard the semi-direct product as a functor in two alternative ways, either by considering the canonical cleavage or by disregarding it: $$\ltimes_+:Rep^\infty(G)\to VB^\infty_+(G) \qquad 
\ltimes:Rep^\infty(G)\to VB^\infty(G)$$
They are related by the forgetful functor $U$, namely $\ltimes =U\circ\ltimes_+$.
The semi-direct product preserves the order, so we have well-defined restrictions $\ltimes^N_+:Rep^N(G)\to VB^N_+(G)$ and $\ltimes^N:Rep^N(G)\to VB^N(G)$, related by the forgetful functor $U^N$. 
\end{remark}


\section{Splitting higher vector bundles}
\label{section:splitting}

Here we discuss the push-forwards in the smooth linear setting, show that a higher vector bundle endowed with a suitable cleavage splits into a representation up to homotopy, and complete the proof of our Main Theorem.

\


Let $q: V\to G$ be a higher vector bundle with core $E=\bigoplus_{n=0}^N E_n$ and let $C$ be a cleavage.
We write $K_n=\ker (d_{n,0}:V_n\to V_{n,0})$, so $E_n=K_n|_{G_0}$ and $V_n=K_n\oplus C_n$, and $\pi_K:V_n\to K_n$, $v\mapsto v_K$, is the projection with kernel $C_n$. 
We identify injective maps $[k]\xto\alpha[n]$ with sub-simplicial sets $\alpha\subset\Delta[n]$ and write $\alpha^*(v)=v|_\alpha$. Remember that we use $\sigma_k:[k]\to[n]$ for the inclusion, and $\delta_i:[n-1]\to[n]$ and $\upsilon_j:[n+1]\to[n]$ for the elementary injections and surjections. 
Let us discuss the push-forward of Lemma \ref{lemma:push-forward} in the smooth linear setting.

\begin{proposition}\label{prop:smooth-push-forward}
Given $i<n$, the push-forward $p_i:V_n\to V_n$ is a smooth vector bundle map covering $d_iu_i:G_n\to G_n$. If $v\in V_n$ and $\alpha\subset[n]$ are such that $v|_\alpha=0$ then $p_i(v)|_\alpha=0$. The push-forward gives a fiber-wise isomorphism $p_i: K_n\to K_n$. Moreover:
\begin{enumerate}[1)]
    \item If $C$ is normal and $g=q(v)\in u_{i+1}(G_{n-1})$ then $p_i(v)=v$;
    \item If $C$ is weakly flat, $v|_{\sigma_i}=0$ and $v|_\alpha\in C$ then $p_i(v)|_\alpha\in C$.
\end{enumerate}
\end{proposition}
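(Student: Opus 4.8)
The plan is to deduce the statement from the set–theoretic Lemma~\ref{lemma:push-forward}, upgrading its construction to the linear category and adding the analysis of $p_i$ on the kernel bundles $K_n$.

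\textit{Smoothness, linearity, and the covering map.} In the linear setting every horn–filling inside the cleavage is a vector bundle isomorphism: since $C_m\subset V_m$ is a sub-bundle with $C_m\oplus\ker d_{m,k}=V_m$, the relative horn map restricts to a fibre-wise linear bijection $d^q_{m,k}|_{C_m}\colon C_m\simto V_{m,k}\times_{G_{m,k}}G_m$ over $\id_{G_m}$ --- here one uses Lemma~\ref{lemma:smooth-horns} and Proposition~\ref{prop:equiv-def} to know $V_{m,k}\to G_{m,k}$ is a smooth vector bundle --- hence it is a vector bundle isomorphism and its inverse is again a vector bundle map. The inductive construction of $h_i$ in the proof of Lemma~\ref{lemma:push-forward} is a finite composite of such inverse horn–fillings, of restriction and face maps, and of the initial-edge assignment $v\mapsto(d^q_{1,0}|_{C_1})^{-1}\bigl(v|_i,\,q(v)|_{\{i+1,i\}}\bigr)$; each of these is a vector bundle map, so $h_i\colon V_n\to C_{n+1}$ and therefore $p_i=d_i\circ h_i$ are vector bundle maps, and $q\circ p_i=d_i\circ u_{i+1}\circ q$ identifies the base map. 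A fibre-wise isomorphism $p_i\colon K_n\to K_n$ will then follow once fibre-wise injectivity is established, since $\rk K_n=\rk E_n$ is constant along $G_n$ (Remark~\ref{rmk:filtration}).

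\textit{Vanishing restrictions, and parts (i)--(ii).} I would inspect the recursion defining $h_i$ and show that, over a fixed base simplex, the linear map $v\mapsto p_i(v)|_\alpha$ factors through $v\mapsto v|_\alpha$: when $i\notin\alpha$ the reindexing $\delta_{i+1}^{-1}\delta_i\alpha=\alpha$ gives $p_i(v)|_\alpha=v|_\alpha$ outright (it is a restriction of $d_{i+1}h_i(v)=v$); when $i\in\alpha$, putting $\gamma=\delta_i\alpha\cup\{i\}$ one has $p_i(v)|_\alpha=d_{\gamma^{-1}(i)}\bigl(h_i(v)|_\gamma\bigr)$, whose inputs through the horn–filling are the initial edge (source $v|_i$), the face $v|_\alpha$ itself, and lower-dimensional data of the same shape attached to sub-simplices of $\alpha$, so an induction on $\dim\alpha$ finishes it. In particular $v|_\alpha=0$ forces $p_i(v)|_\alpha=0$, and since $K_n=\bigcap_{j>0}\ker d_j=\{v:\ v|_{[n]\setminus\{j\}}=0\text{ for all }j>0\}$ this already shows $p_i(K_n)\subset K_n$. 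Part (i) is Lemma~\ref{lemma:push-forward}(i). For part (ii) note that $v|_{\sigma_i}=0$ forces $v|_0=0$ and $v|_{\alpha\sigma_r}=0\in C$ whenever $\alpha(r)\le i$, so the hypotheses of Lemma~\ref{lemma:push-forward}(ii) hold with $S$ the zero section, which is precisely weak flatness.

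\textit{Injectivity on $K_n$ --- the main point.} I would argue by induction on $n$ that $p_i|_{K_n}$ is fibre-wise injective for every $i<n$. Let $v\in K_n$ lie over $g$ with $p_i(v)=0$. Since $v\in K_n$ and $n\ge1$ we have $d_nv=0$, hence $v|_j=(d_nv)|_j=0$ for all $j<n$; in particular $v|_i=0$, so the initial edge $h_i(v)|_{\{i,i+1\}}$, being the horizontal lift with source $v|_i=0$, vanishes. If $i=0$, the recursion then receives only zero inputs apart from $d_1h_0(v)=v$ itself, so $p_0(v)=d_0h_0(v)=0$ forces the $(n{+}1,1)$-horn of $h_0(v)$ to vanish, whence $h_0(v)=0$ by the cleavage isomorphism and $v=d_1h_0(v)=0$. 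If $i\ge1$, the only face of $v$ that can still be nonzero and enter the recursion is $d_0v=v|_{[n]\setminus\{0\}}$, and one checks it reaches only the slot $d_0h_i(v)$ of the $(n{+}1,i{+}1)$-horn of $h_i(v)$; as $d_ih_i(v)=p_i(v)=0$, the cleavage isomorphism $d^q_{n+1,i+1}|_{C_{n+1}}$ shows $h_i(v)$, hence $v=d_{i+1}h_i(v)$, is a linear function of $d_0v$ over the fixed base. Finally $d_0v\in K_{n-1}$ and $p_{i-1}(d_0v)=d_0p_i(v)=0$ by Remark~\ref{rmk:push-forward-vs-face}, so $d_0v=0$ by the inductive hypothesis and therefore $v=0$; the base case $n=1$ is the same computation with $h_0(v)\in C_2$. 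Surjectivity, hence bijectivity, follows by the constant-rank remark. The combinatorial bookkeeping of this last step --- pinning down exactly which faces of $h_i(v)$ can carry the residual contribution $d_0v$, and closing the induction cleanly --- is the main obstacle; the rest is a routine transcription of Lemma~\ref{lemma:push-forward} into the linear category.
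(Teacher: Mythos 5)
Your proposal is correct and follows essentially the same route as the paper's proof: linearity of the cleavage makes $h_i$ and $p_i$ vector bundle maps, vanishing propagates through the inductive construction of $h_i$, fiber-wise injectivity on $K_n$ is obtained by showing the $(n+1,i+1)$-horn of $h_i(v)$ vanishes (the $d_0$-face being handled by the same induction via $d_0p_i=p_{i-1}d_0$ from Remark \ref{rmk:push-forward-vs-face}), and parts i) and ii) reduce to Lemma \ref{lemma:push-forward}. The only cosmetic difference is that you apply Lemma \ref{lemma:push-forward}.ii) uniformly in part ii), where the paper first disposes of the easy cases $\alpha\subset\sigma_i$ and $i\notin\alpha$ by hand, and that your injectivity induction is organized on $n$ rather than on $i$; in substance the arguments coincide.
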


\begin{proof}
The push-forward $p_i$ is smooth and linear because the cleavage $C$ is so. Suppose that $v|_\alpha=0$. If $i\notin \alpha$ then $p_i(v)|_\alpha=v|_\alpha=0$, and if $i\in \alpha$ then, by the inductive construction, $h_i(v)|_{\delta_{i+1}\alpha\cup\{i+1\}}=0$, and therefore, $p_i(v)|_\alpha=0$, where $h_i(v)\in V_{n+1}$ is as in Lemma \ref{lemma:push-forward}.

Clearly $p_i(K_n)\subset K_n$, for  $K_n=\{v:v|_{\delta_j}=0\ \forall j>0\}$. We claim that $p_i: K_n\to K_n$ is fiber-wise injective. Let $v\in K_n$ such that $p_i(v)=0$. 
To show that $v=d_{i+1}h_i(v)=0$, since $h_i(v)\in C_{n+1}$, it is enough to show that every other face of $h_i(v)$ vanishes.  
First, $d_i(h_i(v))=p_i(v)=0$ by assumption. 
Then, by the inductive construction of $h_i(v)$, we have $d_jh_i(v)=0$ for $j\notin\{0,i,i+1\}$. 
If $i=0$ we are done. If $i\neq 0$, since $d_0h_i(v)=h_{i-1}d_0(v)$ and $p_{i-1}d_0(v)=d_0p_i(v)=0$, by an inductive argument on $i$ we have $d_0(v)=0$, hence $d_0h_i(v)=0$, and we are done.

Claim 1) easily follows from Lemma \ref{lemma:push-forward}, part 1). Regarding 2), recall that weakly flat means flat over the zero section, see Definition \ref{def:linear-cleavage}. If $i\notin\alpha$ then $p_i(v)|_\alpha=v|_\alpha\in C$, and if $i\in\alpha$, the hypothesis of Lemma \ref{lemma:push-forward}, part 2), hold and the result follows.
\end{proof}

\begin{lemma}\label{lemma:commuting-pf-smooth}
If $v|_{\sigma_r}=0$ and $i\leq j\leq r$ then $$p_ip_j(x)=
\begin{cases}
p_{j}p_i(x) &j>i+1\\
p_{i}p_{i+1}p_{i}(x) & j=i+1\\
p_i(x) & j=i 
\end{cases}$$
In particular, if $v\in K_n$, the identities hold for every $i,j$.
\end{lemma}

\begin{proof}
It immediately follows from Proposition \ref{prop:commuting-push-forward}.
\end{proof}

%


We define the {\bf retraction} $r:V_n\to V_n\r{G_0}$ as the vector bundle map covering $x_n:G_n\to G_0\subset G_n$ given by pushing forward each vertex to the last fiber in the following order:
$$r=(p_0p_1p_2\dots p_{n-1})\dots(p_0p_1p_2)(p_0p_1)(p_0)$$
The retraction restricts to a fiber-wise isomorphism
$r:K_n\to K_n|_{G_0}=E_n$ by Proposition \ref{prop:smooth-push-forward}, and if $C$ is normal then $r|_{G_0}:E_n\to E_n$ is just the identity.
By Lemma \ref{lemma:push-forward}, we have $rd_0=d_0r$.
In general $r(C)\not\subset C$. Down below we depict how the retraction $r_2:V_2\to V_2|_{G_0}$ works:
$$\begin{matrix}\xymatrix@R=10pt@C=10pt{
 \bullet & &  \\
 \bullet \ar@{.}[u] &  \bullet \ar[ul] \ar@{.}[l]&   \\
 \bullet \ar@{.}[u] & \bullet \ar@{.}[u] \ar@{.}[ul] \ar@{.}[l]& \bullet \ar[ul] \ar@{.}[l]  
}\end{matrix} \qquad \underset{p_0}\mapsto \qquad
\begin{matrix}\xymatrix@R=10pt@C=10pt{
 \bullet & &  \\
 \bullet \ar@{.}[u] &  \bullet \ar[ul] \ar@{.}[l]&   \\
 \bullet \ar@{.}[u] & \bullet \ar[u] \ar@{.}[ul] \ar@{.}[l]& \bullet \ar@{.}[ul] \ar@{.}[l]  
}\end{matrix} \qquad \underset{p_1}\mapsto \qquad
\begin{matrix}\xymatrix@R=10pt@C=10pt{
 \bullet & &  \\
 \bullet \ar[u] &  \bullet \ar@{.}[ul] \ar@{.}[l]&   \\
 \bullet \ar@{.}[u] & \bullet \ar@{.}[u] \ar[ul] \ar@{.}[l]& \bullet \ar@{.}[ul] \ar@{.}[l]  
}\end{matrix} \qquad \underset{p_0}\mapsto \qquad
\begin{matrix}\xymatrix@R=10pt@C=10pt{
 \bullet & &  \\
 \bullet \ar[u] &  \bullet \ar@{.}[ul] \ar@{.}[l]&   \\
 \bullet \ar[u] & \bullet \ar@{.}[u] \ar@{.}[ul] \ar@{.}[l]& \bullet \ar@{.}[ul] \ar@{.}[l]  
}\end{matrix} 
$$

We want to show that if $q: V\to G$ admits a normal weakly flat cleavage then $V$ is a semi-direct product. 
Let $W_n=\bigoplus_{[k]\xto\alpha[n]}x_{\alpha(k)}^*E_k$, where the sum is over the injective maps preserving 0, and let $\phi_n: V_n\to W_n$ be the vector bundle map over $G_n$ defined by $\pi_\alpha\phi_n(v)= r \pi_K (v|_\alpha)$. 
$$\xymatrix@R=10pt{
V_n \ar[d]_{\phi_n} \ar[r]^a  & V_k \ar[d]^{\pi_K} & \\
W_n \ar[rdr]_{\pi_\alpha}  & K_k \ar[rd]^{r} & \\
& & E_k \\
G_n \ar[r]^{a} & G_k \ar[r]^{x_k} & G_0}$$
By definition, $v\in C_n$ if and only if $\pi_{\iota_n}\phi_n(v)=0$, and more generally, 
$v|_\alpha\in C_k$ if and only if $\pi_{\alpha}\phi_n(v)=0$.
If $v\in K_n$ then $\pi_\alpha\phi_n(v)=0$ for $\alpha\neq\iota_n$, and $\pi_{\iota_n}\phi_n(v)=r(v)$. 

\begin{lemma}\label{lemma:direct-sum}
The map $\phi_n:V_n\to W_n$ is a vector bundle isomorphism over $G_n$.
\end{lemma}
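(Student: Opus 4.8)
The plan is to prove three things: that $\phi_n$ is a smooth, linear vector bundle map over $G_n$; that $V_n$ and $W_n$ have the same rank; and that $\phi_n$ is fiber-wise injective. The first two are routine bookkeeping, and together with the third they give the result, since a fiber-wise injective morphism between vector bundles of equal rank over a common base is automatically a vector bundle isomorphism.

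Smoothness and linearity are immediate from the formula $\pi_\alpha\phi_n(v)=r\pi_K(v|_\alpha)$: the restrictions $v\mapsto v|_\alpha$ are simplicial structure maps, $\pi_K$ is the projection of $V_k=K_k\oplus C_k$ onto $K_k$, and $r$ is smooth and linear by Proposition \ref{prop:smooth-push-forward}; one also checks $x_k(g|_\alpha)=x_{\alpha(k)}(g)$, so that $r\pi_K(v|_\alpha)$ indeed lies in $x_{\alpha(k)}^*E_k$. For the ranks I would use Remark \ref{rmk:filtration}, which gives $\rk V_n=\sum_{j=0}^n\binom{n}{j}\lambda_j$ with $\lambda_j=\rk\ker d_{j,0}=\rk E_j$; since the order-preserving injections $[k]\to[n]$ that fix $0$ correspond bijectively to the $k$-element subsets of $\{1,\dots,n\}$, there are $\binom{n}{k}$ of them, so $\rk W_n=\sum_{k=0}^n\binom{n}{k}\rk E_k=\rk V_n$.

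The heart of the matter is fiber-wise injectivity, which I would prove by induction on $n$; the case $n=0$ is trivial, since there $V_0=E_0=W_0$ and $\phi_0=\id$. For the inductive step, suppose $\phi_n(v)=0$ for some $v$ in the fiber over $g\in G_n$. Because $r$ restricts to a fiber-wise isomorphism $K_k\to E_k$ (Proposition \ref{prop:smooth-push-forward}), the hypothesis says precisely that $v|_\alpha\in C_k$ for every injective $\alpha\colon[k]\to[n]$ with $\alpha(0)=0$. Taking $\alpha=\iota_n$ gives $v\in C_n$. On the other hand, for $i>0$ the coface $\delta_i$ fixes $0$, so pre-composition with $\delta_i$ sends $0$-preserving injections to $0$-preserving injections; hence for every injective $\beta$ fixing $0$ we get $\pi_\beta\phi_{n-1}(d_iv)=r\pi_K(v|_{\delta_i\beta})=0$, that is, $\phi_{n-1}(d_iv)=0$, and the inductive hypothesis forces $d_iv=0$ for all $i>0$, so $v\in K_n$. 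Therefore $v\in K_n\cap C_n=0$.

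I expect the only real obstacle to be keeping the index combinatorics straight, both in the rank count and in checking that the faces $d_iv$ with $i>0$ are controlled by exactly the $0$-preserving sub-simplices of $v$. It is worth noting that normality and weak flatness of $C$ play no role in this lemma --- all that is used is the splitting $V_n=K_n\oplus C_n$, the fact that $r$ is an isomorphism on each $K_k$, and the rank formula of Remark \ref{rmk:filtration}.
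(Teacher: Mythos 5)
Your proof is correct and follows essentially the same strategy as the paper's: equal ranks plus fiberwise injectivity deduced from the splitting $V_n=K_n\oplus C_n$ and the fact that $r|_{K}$ is a fiberwise isomorphism. The only (cosmetic) differences are that the paper obtains injectivity by picking a $0$-preserving face of minimal dimension with nonzero restriction, which then automatically lies in $K$, rather than by induction on $n$, and it gets the rank equality from the Dold--Kan identification $V_n|_{G_0}\cong W_n|_{G_0}$ of Remark \ref{rmk:dk-projective} instead of the counting in Remark \ref{rmk:filtration}.
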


\begin{proof}
By Dold-Kan we have $V_n|_{G_0}\cong W_n|_{G_0}$, see Remark \ref{rmk:dk-projective}, so $V_n$ and $W_n$ have the same rank. We will show that $\phi_n$ is fiber-wise injective.
Suppose that $v\neq0$. Let $[k]\xto\alpha[n]$, $\alpha(0)=0$, such that $v|_\alpha\neq0$, with $k$ minimal. Then $v|_\alpha\in K_k$, for $d_i(v|_\alpha)=v|_{\alpha\delta_i}=0$.
Since $r|_K$ is fiber-wise isomorphic, we have 
$0\neq r(v|_\alpha)=r\pi_K(v|_\alpha)=\pi_\alpha\phi_n(v)$ and the result is proven.
\end{proof}

We translate the simplicial structure from $V$ to $W$ via $\phi$, setting $d_i=\phi d_i\phi^{-1}:W_n\to W_{n-1}$ and $u_j=\phi u_j\phi^{-1}:W_n\to W_{n+1}$. The next lemma shows that the positive faces and the degeneracies behave exactly as if $V$ were a semi-direct product, see Definition \ref{def:sdp-faces-degeneracies}.


\begin{lemma}\label{lemma:splitting-positive-faces-deg}
We have $\pi_\beta  d_i = \pi_{\delta_i\beta}$ and if $C$ is normal then $\pi_\beta u_j=\pi_{\upsilon_j\beta}$. In particular,
$$d_i(e,\alpha,g)=\begin{cases}
(e,\upsilon_i\alpha,g\delta_i) & i\notin\alpha\\ 
0 & i\in\alpha \end{cases}\qquad
u_j(e,\alpha,g)=\begin{cases}
(e,\delta_{1}\alpha,g\upsilon_0) & j=0\\
(e,\delta_j\alpha,g\upsilon_j)& j\notin\alpha\\
(e,\delta_j\alpha,g\upsilon_j)+(e,\delta_{j+1}\alpha,g\upsilon_j)& 0\neq j\in\alpha
\end{cases}$$
\end{lemma}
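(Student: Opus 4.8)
The plan is to exploit that $\phi_n$ is assembled pointwise out of restrictions: by construction $\pi_\alpha\phi_n(v)=r\pi_K(v|_\alpha)$, and $\phi$ is a fibrewise isomorphism covering $\id_{G_n}$ by Lemma \ref{lemma:direct-sum}. The transported operators are $d_i=\phi_{n-1}d_i\phi_n^{-1}$ and $u_j=\phi_{n+1}u_j\phi_n^{-1}$, so computing $\pi_\beta$ applied to them reduces to evaluating $\pi_\beta\phi_{n-1}(d_iv)$ and $\pi_\beta\phi_{n+1}(u_jv)$ for $v\in V_n$ and then cancelling $\phi_n\phi_n^{-1}$ on $W_n$. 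The only extra input I would need is the tautology $(\theta^*v)|_\beta=v|_{\theta\beta}$ valid for any order map $\theta$, together with the observation that, $\phi$ covering the identity, the basepoint transforms by $g\mapsto g\theta$ — which is where the $g\delta_i$ and $g\upsilon_j$ in the statement come from.

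For the positive faces, $\delta_i$ with $i>0$ preserves $0$, so whenever $\beta$ is a valid index for $W_{n-1}$ the composite $\delta_i\beta$ is again injective and $0$-preserving, hence a valid index for $W_n$; then $\pi_\beta\phi_{n-1}(d_iv)=r\pi_K\big((d_iv)|_\beta\big)=r\pi_K\big(v|_{\delta_i\beta}\big)=\pi_{\delta_i\beta}\phi_n(v)$, and composing with $\phi_n^{-1}$ gives $\pi_\beta d_i=\pi_{\delta_i\beta}$ (no normality needed). For the degeneracies I would split into cases according to whether $\upsilon_j\beta$ is injective. If it is, then (using $\upsilon_j(0)=0$) it is a valid index for $W_n$ and the same chain of equalities yields $\pi_\beta u_j=\pi_{\upsilon_j\beta}$. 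If $\upsilon_j\beta$ is not injective it factors through an elementary degeneracy, so $v|_{\upsilon_j\beta}$ is a degenerate simplex; this is the unique place where normality of $C$ enters: degenerate simplices lie in $C=\ker\pi_K$, so $\pi_K$ annihilates $v|_{\upsilon_j\beta}$ and $\pi_\beta\phi_{n+1}(u_jv)=0=\pi_{\upsilon_j\beta}\phi_n(v)$ under the convention $\pi_\gamma=0$ for non-injective $\gamma$. Hence $\pi_\beta u_j=\pi_{\upsilon_j\beta}$.

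The explicit ``in particular'' formulas then follow from $\pi_\beta d_i=\pi_{\delta_i\beta}$ and $\pi_\beta u_j=\pi_{\upsilon_j\beta}$ by exactly the combinatorial bookkeeping already carried out for the semi-direct product in Remark \ref{rmk:alt-formulas}: for a homogeneous element $(e,\alpha,g)$ one has $\pi_{\alpha'}(e,\alpha,g)=e$ or $0$ according as $\alpha'=\alpha$ or not, so one solves $\delta_i\beta=\alpha$ (the unique solution is $\beta=\upsilon_i\alpha$ when $i\notin\alpha$, and there is none when $i\in\alpha$) and $\upsilon_j\beta=\alpha$ (the candidate solutions are $\delta_j\alpha$ and $\delta_{j+1}\alpha$, which coincide when $j\notin\alpha$, while $\delta_0\alpha$ must be discarded since it fails to preserve $0$), yielding the three displayed cases for $u_j$. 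I do not anticipate a real obstacle: the only points requiring care are keeping track of which face and degeneracy operators preserve the basepoint $0$ — so that the relevant composites stay inside the index set defining $W$ — and the single, essential appeal to normality of $C$ in the non-injective degeneracy case. The genuinely delicate formula, the one for $d_0$ encoding the operators $R_m$, is deliberately not part of this lemma and is treated separately.
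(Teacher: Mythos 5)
Your proposal is correct and follows essentially the same route as the paper: unwind $\pi_\beta\phi(\,\cdot\,)=r\pi_K((\,\cdot\,)|_\beta)$, use functoriality of restriction to turn $(d_iv)|_\beta$ and $(u_jv)|_\beta$ into $v|_{\delta_i\beta}$ and $v|_{\upsilon_j\beta}$, and invoke normality of $C$ exactly once, to kill the degenerate restriction $v|_{\upsilon_j\beta}=u_{j'}(v|_\gamma)$ when $\upsilon_j\beta$ is not injective. The derivation of the homogeneous formulas by solving $\delta_i\beta=\alpha$ and $\upsilon_j\beta=\alpha$ (discarding the non-$0$-preserving lift $\delta_0\alpha$) matches the paper's appeal to Remark \ref{rmk:alt-formulas}.
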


\begin{proof}
Regarding the positive faces, given $\beta:[l]\to[n-1]$, 
the definition of $\phi$ readily implies that
$\pi_\beta d_i=(\pi_\beta\phi) d_i\phi^{-1}=r\pi_K \beta^* d_i \phi^{-1}
=(\pi_{\delta_i\beta}\phi)\phi^{-1}=\pi_{\delta_i\beta}$.

Regarding the degeneracies, given $\beta:[l]\to[n+1]$, we consider two cases. 
If $\upsilon_j\beta$ is injective then $\pi_\beta u_j=(\pi_\beta\phi) u_j\phi^{-1}= r\pi_K b u_j\phi^{-1} = (\pi_{\upsilon_j\beta}\phi)\phi^{-1}=\pi_{\upsilon_j\beta}$ by the definition of $\phi$. If $\upsilon_j\beta$ is not injective then $\pi_{\upsilon_j\beta}=0$ by convention, we can write $\upsilon_j\beta=\gamma\upsilon_{j'}$ for some $\gamma$ and $j'$, and $\pi_\beta u_j = r \pi_K b u_j\phi^{-1} = r \pi_K u_{j'} \gamma^*\phi^{-1}=0$, for $\pi_K$ vanishes over the cleavage $C\supset u_{j'}(V_{n-1})$.

The descriptions for homogeneous vectors are consequences of the proven identities.
\end{proof}    


We want to describe $d_0:W_n\to W_{n-1}$. 
Let us first establish some technical auxiliary results. 
We define the {\bf intermediate retraction} $r_\lambda:V_n\to V_n$ as the linear map given by
$r_\lambda=p_0\dots p_{\lambda-1}$. 
Since the push-forward $p_i$ covers the map $d_iu_{i+1}:G_n\to G_n$, the intermediate retraction 
$r_\lambda$ covers $d_0u_1\dots d_{\lambda-1}u_{\lambda}=u_0^\lambda d_0^\lambda$. 
$$\begin{tikzpicture}[scale=.4]
\draw[thick] (0,5) -- (5,0) ;
\draw[dashed] (0,0) -- (0,5) ;
\draw[dashed] (3,0) -- (3,2) -- (0,5) ;
\draw (0,0) node[below] {\scriptsize $n$};
\draw (3,0) node[below] {\scriptsize $\lambda$};
\draw (5,0) node[below] {\scriptsize $0$};
\draw (3.5,1.5) node[right] {\scriptsize $x$};
\end{tikzpicture} \qquad \qquad
\begin{tikzpicture}[scale=.4]
\draw[dashed] (0,5) -- (5,0) ;
\draw[dashed] (0,0) -- (0,5) ;
\draw[thick] (3,0) -- (3,2) -- (0,5) ;
\draw (0,0) node[below] {\scriptsize $n$};
\draw (3,0) node[below] {\scriptsize $\lambda$};
\draw (5,0) node[below] {\scriptsize $0$};
\draw (3,1.5) node[left] {\scriptsize $r_\lambda(x)$};
\end{tikzpicture} \qquad\qquad
\begin{tikzpicture}[scale=.4]
\draw[dashed] (0,5) -- (5,0) ;
\draw[thick] (0,0) -- (0,5) ;
\draw[dashed] (3,0) -- (3,2) -- (0,5) ;
\draw (0,0) node[below] {\scriptsize $n$};
\draw (3,0) node[below] {\scriptsize $\lambda$};
\draw (5,0) node[below] {\scriptsize $0$};
\draw (0,1.5) node[left] {\scriptsize $r(x)$};
\end{tikzpicture}$$
As we did with the faces and degeneracies, we translate the push-forward $p_i$, the retraction $r$, and the intermediate retractions $r_\lambda$ to $W$ via $\phi$, as 
$p_i=\phi p_i\phi^{-1}$, $r=\phi r\phi^{-1}$ and
$r_\lambda=\phi r_\lambda\phi^{-1}$.

\begin{lemma}\label{lemma:first-arrows}
If $C$ is normal and weakly flat, then: 
\begin{enumerate}[i)]
    \item $p_i(e,\alpha,g)=(e,\alpha,g\upsilon_{i+1}\delta_i)$ if $i<\alpha(k)$.
    \item $r_{\alpha(k)}(e,\alpha,g)=(e,\alpha,g\delta_0^{\alpha(k)}\upsilon_0^{\alpha(k)})$.
\end{enumerate}
\end{lemma}

\begin{proof}
Let us prove ii).
Given $i<\alpha(k)$, write $g'=g\delta_{i+1}^{\alpha(k)-i-1}\upsilon_{i+1}^{\alpha(k)-i-1}$.
By an inductive argument, we need to show that $p_i(e,\alpha,g')=(e,\alpha,g'\upsilon_{i+1}\delta_i)$.
By Lemma \ref{lemma:splitting-positive-faces-deg}, we have that
$(e,\alpha,g')|_{\sigma_i}=d_{i+1}\dots d_n(e,\alpha,g')=0$. Given $\beta\neq\alpha$, since $(e,\alpha,g')|_\beta\in C$, by Proposition \ref{prop:smooth-push-forward}, $p_i(e,\alpha,g')|_\beta\in C$, and $p_i(e,\alpha,g')=(e',\alpha,g'\upsilon_{i+1}\delta_i)$ for some $e'$. 
It remains to show that $e'=e$.

By definition, $e'=
\pi_\alpha p_i(e,\alpha,g')=
r\pi_K((p_i(e,\alpha,g'))|_\alpha)$. We claim that $(p_i(e,\alpha,g'))|_\alpha=p_{i'}((e,\alpha,g')|_{\alpha})$ for some $i'$. This is because $g'|_{\{\alpha(k),\dots,i+1\}}$ is totally degenerate, see also Remark \ref{rmk:push-forward-faces}.
Then we have
$e'=\pi_Kp_{i'}((e,\alpha,g')|_{\alpha})$. Since
$(e,\alpha,g')|_{\alpha}\in K_k$, 
by Proposition \ref{prop:smooth-push-forward}, 
$p_{i'}((e,\alpha,g')|_{\alpha})\in K_k$ as well, so we can disregard the projection $\pi_K$ over $K_k$, and get $e'=rp_{i'}((e,\alpha,g')|_{\alpha})$.
Finally, by Lemma \ref{lemma:commuting-pf-smooth}, we can rearrange the pushforwards appearing in $rp_{i'}$ in increasing order to conclude $rp_{i'}|_{K_k}=r|_{K_k}$ and $e'=r((e,\alpha,g')|_{\alpha})=e$.

We now prove i) using ii). By the above arguments, we know that $p_i(e,\alpha,g)=(e',\alpha,g\upsilon_{i+1}\delta_i)$ for 
$e'=\pi_\alpha r_{\alpha(k)}p_i(e,\alpha,g)$. By
Lemma \ref{lemma:commuting-pf-smooth}, since every pushforward appearing in $r_{\alpha(k)}p_i$ is of the form $p_j$, $j<\alpha(k)$, we can rearrange them to get $r_{\alpha(k)}p_i=r_{\alpha(k)}$, and conclude that $e'=e$.
\end{proof}


The next lemma shows that $d_0$, when expressed under the direct sum given by Lemma \ref{lemma:direct-sum}, increases the support, and also that the components $\pi_{\iota_n} d_0$ determine all the others.

\begin{lemma}\label{lemma:support}
If $i\notin\beta'$ then $\pi_\beta d_0(e,\alpha,g)=
\begin{cases}
 \pi_{\upsilon_{i-1}\beta} d_0(e,\upsilon_i\alpha,g\delta_i) & i\notin\alpha\\
0 & i\in\alpha
\end{cases}$
\end{lemma}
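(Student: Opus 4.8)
The plan is not to compute $d_0$ directly on $W$, but to exploit the fact that $W$ — equipped with the operators $d_i=\phi d_i\phi^{-1}$, $u_j=\phi u_j\phi^{-1}$ transported from $V$ — is a bona fide simplicial vector bundle, so all the simplicial identities of Remark~\ref{rmk:simplicial-identities} hold, while its positive faces are already pinned down by Lemma~\ref{lemma:splitting-positive-faces-deg}. The only simplicial identity tying the still-mysterious operator $d_0$ to the known positive faces is $d_0d_i=d_{i-1}d_0$ for $i\geq1$, and this is precisely the handle I would use.

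First I would record the combinatorics forced by the hypothesis $i\notin\beta'$. Since $\beta'(0)=0$, this forces $i\geq1$; and since $i\in\beta'$ if and only if $i-1\in\beta$ (because $\beta'(j+1)=\beta(j)+1$), it forces $i-1\notin\beta$, whence $\delta_{i-1}\upsilon_{i-1}\beta=\beta$. Likewise, when $i\notin\alpha$ one has $\delta_i\upsilon_i\alpha=\alpha$ and $\upsilon_i\alpha$ is a legitimate injective order map preserving $0$, so $(e,\upsilon_i\alpha,g\delta_i)$ is a genuine homogeneous vector of $W_{n-1}$. Then, for an arbitrary $w\in W_n$, I would chain the transported identity $d_{i-1}d_0(w)=d_0d_i(w)$ with the positive-face formula $\pi_{\upsilon_{i-1}\beta}\,d_{i-1}=\pi_{\delta_{i-1}\upsilon_{i-1}\beta}=\pi_\beta$ of Lemma~\ref{lemma:splitting-positive-faces-deg}, getting $\pi_\beta d_0(w)=\pi_{\upsilon_{i-1}\beta}d_0d_i(w)$. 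Finally I would set $w=(e,\alpha,g)$ and substitute the explicit description of $d_i$ on homogeneous vectors from the same lemma — namely $d_i(e,\alpha,g)=(e,\upsilon_i\alpha,g\delta_i)$ when $i\notin\alpha$ and $d_i(e,\alpha,g)=0$ when $i\in\alpha$ — which produces both clauses of the statement simultaneously.

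I do not expect a serious obstacle here: the cleavage enters only implicitly, through $\phi$ being an isomorphism (Lemma~\ref{lemma:direct-sum}), and neither normality nor weak flatness of $C$ is actually needed for this particular reduction. The one thing to be careful about is not to smuggle in any formula for $d_0$ on $W$ — that is exactly what the subsequent lemmas are meant to establish — and to keep the index shifts $\alpha\mapsto\upsilon_i\alpha$, $\beta\mapsto\upsilon_{i-1}\beta$, $g\mapsto g\delta_i$ consistent; the hypothesis $i\notin\beta'$ is precisely what guarantees all of these are defined (and makes the edge case $n=1$, where $\beta'$ would have to be the identity, vacuous).
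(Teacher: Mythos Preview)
Your argument is correct and is essentially identical to the paper's own proof: both use $i\notin\beta'$ to rewrite $\beta=\delta_{i-1}\upsilon_{i-1}\beta$, invoke $\pi_\gamma d_{i-1}=\pi_{\delta_{i-1}\gamma}$ from Lemma~\ref{lemma:splitting-positive-faces-deg}, apply the simplicial identity $d_{i-1}d_0=d_0d_i$, and then feed in the explicit formula for $d_i(e,\alpha,g)$. Your write-up is in fact a bit more careful than the paper's in spelling out why $i\geq1$ and $i-1\notin\beta$.
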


\begin{proof}
Since $i\notin\beta'$ we can write $\beta=\delta_{i-1}\upsilon_{i-1}\beta$. Then 
$$\pi_\beta d_0(e,\alpha,g)=\pi_{\delta_{i-1}\upsilon_{i-1}\beta}  d_0(e,\alpha,g)=\pi_{\upsilon_{i}\beta}   d_{i-1}d_0(e,\alpha,g)=\pi_{\upsilon_{i}\beta}  d_0d_{i}(e,\alpha,g)$$
and by Lemma \ref{lemma:splitting-positive-faces-deg}, if $i\in\alpha$ then $d_{i}(e,\alpha,g)=0$ and
if $i\notin\alpha$ then $d_{i}(e,\alpha,g)=(e,\upsilon_i\alpha,g\delta_i)$.
\end{proof}


We have everything in place to prove our second main theorem, namely that we can split a higher vector bundle using a normal weakly flat cleavage.



\begin{theorem}\label{thm:splitting}
Given $q: V\to G$ be a higher vector bundle with core $E=\oplus_{n=0}^NE_n$, and $C$ a normal weakly flat cleavage, the operators $R_m\in\Gamma\big(G_m;\Hom^{(m-1)}(s_0^*E,t_0^*E)\big)$ defined by 
$$R_m^g(e)=(-1)^{m+n-1}
\pi_{\iota}d_0(e,\sigma_n,u_0^n(g)) \qquad 
g\in G_m\quad  e\in E_n^{g\sigma_0}$$ 
give a well-defined representation up to homotopy $R: G\action E$. Moreover, $W=G\ltimes_R E$ is the semi-direct product of $R$, and $\phi: V\to G\ltimes_R E$ is an isomorphism satisfying $\phi(C)=C^{can}$.
\end{theorem}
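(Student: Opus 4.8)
The plan is to transport the whole simplicial structure from $V$ to $W$ along the level-wise isomorphisms $\phi_n$ of Lemma \ref{lemma:direct-sum} (setting $d_i=\phi d_i\phi^{-1}$, $u_j=\phi u_j\phi^{-1}$, and likewise for the push-forwards and the retractions $r_a=\phi r_a\phi^{-1}$), and then check, formula by formula, that this transported structure is the one of Definition \ref{def:sdp-faces-degeneracies} for the operators $R_m$ extracted from $\phi$ and $d_0$. For the positive faces and the degeneracies this is exactly Lemma \ref{lemma:splitting-positive-faces-deg} (where normality of $C$ enters). The real work is the $0$-face: one must show that $\pi_\beta d_0(e,\alpha,g)$ coincides with the Case~I / Case~II expression of Definition \ref{def:sdp-faces-degeneracies}. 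I would first use Lemma \ref{lemma:support} to peel off, one index at a time, every $i\notin\beta'$, so that the computation of an arbitrary $\pi_\beta d_0$ is reduced inductively to that of $\pi_{\iota}d_0$ on a lower simplex; then use Lemma \ref{lemma:first-arrows} to strip the leading identities that the reduction introduces into the base (which is exactly what makes $\pi_\iota d_0$ match the definition of $R$), and Remark \ref{rmk:push-forward-vs-face} to commute the surviving push-forwards past the faces. What is left of the reduction is, when $\alpha=\beta'\sigma_k$, the single term $(-1)^{l}R^{g\beta'\tau_{l+1-k}}_{l+1-k}(e)$; when $\alpha=\beta'\delta_i$, the signed copy $(-1)^{i+1}e$; and, for all other $(\alpha,\beta)$, zero for degree reasons. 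This long bookkeeping is the main obstacle of the theorem; everything else is comparatively soft once it is in place.

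Granting this identification, the transported structure on $W$ is, by construction, the semi-direct product construction of Definition \ref{def:sdp-faces-degeneracies} applied to the a~priori arbitrary operators $R_m$, and it is a simplicial vector bundle because $V$ is one and $\phi$ is a level-wise vector bundle isomorphism. By Remark \ref{rmk:converse-thm1}, the identity $d_0d_0=d_0d_1$ forces RH2), while $d_0u_0=\id$ forces $R_1^{u(x)}|_{E_0}=\id$ and $R_m^{u_0(g)}|_{E_0}=0$ for $m>1$. To upgrade these to the full axiom RH1) I would invoke normality and weak flatness of $C$. For $g=u_j(h)$ with $j\geq1$, the simplicial identity $u_0^{\,n}u_j=u_{n+j}u_0^{\,n}$ shows that $\phi^{-1}(e,\sigma_n,u_0^{\,n}g)$ is a degenerate simplex of $V$, hence lies in $C$ by normality; since degeneracies are stable under $d_0$ up to relabelling ($d_0u_{n+j}=u_{n+j-1}d_0$ as $n+j\geq2$), its $0$-face is again degenerate and therefore in $C$, so $R_m^g(e)=\pm\,r\pi_K d_0\phi^{-1}(e,\sigma_n,u_0^{\,n}g)=0$. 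For $g=u_0(h)$ one uses Proposition \ref{prop:smooth-push-forward}.i) and normality to see that all push-forwards, hence the retraction, act as the identity over the totally degenerate base $u_0^{\,n}(u(x))$, so $\phi^{-1}(e,\sigma_n,u_0^{\,n+1}h)$ equals $e$ on the face $\sigma_n$; a further bookkeeping with weak flatness of $C$ (and RH2), now available) then yields $R_m^{u_0(h)}=0$ for $m>1$ and, by induction on the degree, $R_1^{u(x)}=\id$ on all of $E$. This completes RH1).

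With RH1) and RH2) established, $R:G\action E$ is a representation up to homotopy, of order $N$ since $E_n=\ker(d_{n,0})|_{G_0}=0$ for $n>N$. Consequently the transported structure on $W$ is genuinely the semi-direct product $G\ltimes_R E$ of Definition \ref{def:sdp-faces-degeneracies}, with canonical cleavage $C^{can}_n=\{v:\pi_{\iota_n}(v)=0\}$. Finally $\phi:V\to G\ltimes_R E$ is an isomorphism of simplicial vector bundles — it is a level-wise vector bundle isomorphism by Lemma \ref{lemma:direct-sum} and simplicial by the identification of all structure maps carried out above — and $\phi(C)=C^{can}$ because, by the defining property of $\phi$, one has $v\in C_n\iff\pi_{\iota_n}\phi_n(v)=0\iff\phi_n(v)\in C^{can}_n$. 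As indicated, the one delicate ingredient is the $d_0$ identity of the first paragraph; the RH1) upgrade and the remaining assertions follow quickly once the push-forward lemmas and Remark \ref{rmk:converse-thm1} are in hand.
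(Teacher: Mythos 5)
Your overall architecture is the same as the paper's (transport the structure along $\phi$, use Lemma \ref{lemma:splitting-positive-faces-deg} for the positive faces and degeneracies, reduce $d_0$ to the components $\pi_\iota d_0$ via Lemma \ref{lemma:support} and strip leading identity arrows via Lemma \ref{lemma:first-arrows} and Remark \ref{rmk:push-forward-vs-face}, then get RH2) and the partial RH1) from Remark \ref{rmk:converse-thm1}). But there is a genuine gap at the crux of the argument: you claim that for all $\alpha$ other than $\beta'\sigma_k$ and $\beta'\delta_i$ the component $\pi_\beta d_0(e,\alpha,g)$ vanishes ``for degree reasons.'' There is no degree obstruction here: for instance with $n=3$, $\alpha=\{0,2\}$, the component $\pi_{\iota_2}d_0(e,\alpha,g)$ would be a degree $+1$ map $E_1\to E_2$, which is perfectly admissible a priori (the operators $R_m$ and $\psi_m$ themselves have positive degree). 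The actual reason these components vanish is the weak flatness of $C$: for such $\alpha$ the simplex $w=\phi^{-1}(e,\alpha,g)$ satisfies $s_0(w)=0$, $s_k(w)\in C$ for all $k$, and $d_i(w)\in C$ for all $i>0$ (precisely because $\alpha$ is neither an initial segment $\sigma_k$ nor a face $\delta_i$), so weak flatness gives $d_0(w)\in C$, i.e.\ $\pi_\iota d_0(e,\alpha,g)=0$. This is not a cosmetic point: weak flatness is exactly the hypothesis that characterizes semi-direct products among higher vector bundles with normal cleavages, and without it the identification of the transported $d_0$ with Definition \ref{def:sdp-faces-degeneracies} is false, so an argument that never uses it at this step cannot be correct.

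Two smaller repairs in the same vein: the case $\alpha=\delta_i$ is not obtained by the reduction bookkeeping alone; after reducing to a totally degenerate base $u_0^n(x)$ one must restrict to the fiber $V^x$, where the cleavage is the degenerate one, and invoke the Dold--Kan formulas (Lemma \ref{lemma:dk-groupoid} and Proposition \ref{prop:inverse-dk}) to get the sign $(-1)^{i+1}$. And in your RH1) upgrade, the case $g=u_0(h)$ is handled in the paper by writing $d_0(e,\sigma_n,u_0^{n+1}h)=d_0u_n(e,\sigma_n,u_0^n h)-d_0(e,\delta_n\sigma_n,u_nu_0^n h)$ and observing that the second term lies in $C$ precisely by the weak-flatness vanishing just discussed (since $\delta_n\sigma_n$ is neither a $\sigma_k$ nor a $\delta_i$); your ``further bookkeeping with weak flatness'' should be made explicit along these lines, since it again relies on the step your sketch left unjustified. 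The remaining assertions ($W=G\ltimes_RE$, $\phi$ an isomorphism, $\phi(C)=C^{can}$) are fine as you state them.
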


If $V=(G\ltimes_RE)$ is a semi-direct product and $C^{can}$ is the canonical cleavage, then the above formulas recover the operators $R$.

\begin{proof}
Let us show that $d_0=\phi d_0\phi^{-1}:W_n\to W_{n-1}$ satisfy the Definition \ref{def:sdp-faces-degeneracies}.
By Lemma \ref{lemma:support}, writing $\iota=\iota_{n-1}$, it is enough to show the following formulas:
$$\pi_{\iota} d_0(e,\alpha,g)=\begin{cases}
(-1)^{n-1} R^{g\tau_{n-k}}_{n-k}(e) & \alpha=\sigma_k \\
(-1)^{i+1}e & \alpha=\delta_i,\ 0<i<l+1\\
0 & \text{otherwise}.
\end{cases}$$
By Lemma \ref{lemma:support}, since $d_0$ increases the support, we can write $d_0(e,\alpha,g)=\sum_{\beta'\supset\alpha}(e_\beta,\beta,g\delta_0)$. 
We claim that it is enough to show the formulas in the case $g=g\delta_0^{\alpha(k)}\upsilon_0^{\alpha(k)}$.
In fact, we have
\begin{align*}
\pi_{\iota}   d_0(e,\alpha,g\delta_0^{\alpha(k)}\upsilon_0^{\alpha(k)})&=
\pi_{\iota}   d_0   r_{\alpha(k)}(e,\alpha,g) && \text{by Lemma \ref{lemma:first-arrows}}\\
&= 
\pi_{\iota}   r_{{\alpha(k)}-1}   d_0(e,\alpha,g) && \text{by Remark \ref{rmk:push-forward-faces}}\\
&=
\pi_{\iota}   r_{{\alpha(k)}-1}\sum_{\beta'\supset\alpha}(e_\beta,\beta,g\delta_0) && \\
&=\pi_{\iota} \sum_{\beta'\supset\alpha}(e_\beta,\beta,g\delta_0^{{\alpha(k)}} \upsilon_0^{{\alpha(k)}-1})
 && \text{by Lemma \ref{lemma:first-arrows}}\\
&= e_{\iota}=\pi_{\iota}   d_0(e,\alpha,g)
\end{align*}
Suppose then that 
$g=g\delta_0^{\alpha(k)}\upsilon_0^{\alpha(k)}$
and consider the following situations:
\begin{itemize}
    \item If $\alpha=\sigma_k$, $0\leq k\leq n$, then $\pi_\iota d_0(e,\sigma_k,g)=(-1)^{n-1}R_{n-k}^{g\tau_{n-k}}(e)$ because $g=g\delta_0^{\alpha(k)}\upsilon_0^{\alpha(k)}$, and because of the way we have defined $R$;
    \item If $\alpha=\delta_{i}$, $0<i<n$, then $k=n-1$, $\alpha(k)=n$, and $g=u_0^{n}(x)$ is totally degenerated, $x=s_0(g)\in E_0$. Then we can restrict our attention to the fiber $E^x$. By the Lemma \ref{lemma:dk-groupoid}, $C^x=D^x$ is spanned by the degenerate simplices, and $\pi_\iota d_0(e,\delta_i,u_0^n(x))=(-1)^{i+1}e$ by our formulas for Dold-Kan in Proposition \ref{prop:inverse-dk};
    \item If $\alpha\neq\sigma_k,\delta_i$, then $s_0(e,\alpha,g)=0$, $s_k(e,\alpha,g)\in C$ for every $k$, and $d_i(e,\alpha,g)\in C$ for every $i>0$. Since $C$ is weakly flat, $d_0(e,\alpha,g)\in C$, or equivalently, $\pi_\iota d_0(e,\alpha,g)=0$. 
\end{itemize}


We have shown that the faces and degeneracies of $V$, under the isomorphisms $\phi_n$, correspond exactly to the formulas of the semi-direct product in Definition \ref{def:sdp-faces-degeneracies}. It remains to prove that the operators $R_n$ satisfy the axioms of a representation up to homotopy. As observed in Remark \ref{rmk:converse-thm1}, the simplicial identities of $V$ imply that 
$R$ satisfies RH2) and a part of RH1), namely $R_1^{u(x)}|_{E_0}=\id$ and $R_{m}^{u_0(g)}|_{E_0}=0$ for $m\geq 2$. To complete RH1) we still need to check:
\begin{enumerate}[i)]
    \item $R_1^{u(x)}|_{E_n}=\id$ for arbitrary $n$; and 
    \item $R_{m}^{u_i(g)}|_{E_n}=0$ for $n>0$, $m\geq 2$ and arbitrary $i$.
\end{enumerate}    
Condition i) follows by restricting to the fiber $V^x$ and using Proposition \ref{prop:inverse-dk}. To prove ii), given $e\in E_n$, $n>0$, and $g\in G_{m-1}$, we must check that $d_0(e,\sigma_n,u_0^n u_i(g))\in C$. We use Lemma \ref{lemma:splitting-positive-faces-deg}.
If $i>0$ then
$d_0(e,\sigma_n,u_0^n u_i(g))=d_0u_{i+n}(e,\sigma_n,u_0^n(g))=u_{i+n-1}d_0(e,\sigma_n,u_0^n(g))$ is in $C$ for it is normal. 
If $i=0$, we can write $d_0(e,\sigma_n,u_0^n u_0(g))=u_{n-1}d_0(e,\sigma_n,u_0^n(g))-d_0(e,\delta_{n}\sigma_n,u_nu_0^n(g))$,
the first term is degenerate, and the second is in $C$, for $\delta_n\sigma_n$ is neither $\sigma_k$ nor $\delta_i$. 
\end{proof}


Theorems \ref{thm:sdp} and \ref{thm:splitting} combine to give a weak version of Theorem \ref{thm:main-intro}. 

\begin{corollary}
The semi-direct product sets a 1-1 correspondence between representations up to homotopy $R: G\action E$ (modulo strict isomorphisms) and higher vector bundles $q: V\to G$ coupled with a normal weakly flat cleavage $C$ (modulo flat isomorphisms).
\end{corollary}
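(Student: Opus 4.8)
The plan is to show that the constructions of Theorems~\ref{thm:sdp} and~\ref{thm:splitting} are mutually inverse at the level of isomorphism classes. Write $\mathfrak{s}$ for the assignment $R\mapsto(G\ltimes_R E,C^{can})$. First I would record that $\mathfrak{s}$ is well defined on classes: by Theorem~\ref{thm:sdp} its value is a higher vector bundle over $G$ together with a normal weakly flat cleavage, and by Proposition~\ref{prop:functoriality} a strict isomorphism $\psi\colon R\to R'$ yields the flat morphism $\psi^\wedge$, which is an isomorphism with inverse $(\psi^{-1})^\wedge$ by functoriality ($\psi^{-1}$ being again strict); so strictly isomorphic representations give flatly isomorphic pairs.

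Surjectivity of $\mathfrak{s}$ on classes is then immediate from Theorem~\ref{thm:splitting}: any higher vector bundle $q\colon V\to G$ with a normal weakly flat cleavage $C$ and core $E$ comes equipped with a representation up to homotopy $R$ on $E$ and an isomorphism $\phi\colon V\to G\ltimes_R E$ carrying $C$ onto $C^{can}$, so $[(V,C)]=\mathfrak{s}([R])$.

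For injectivity I would prove the sharper statement that every flat isomorphism $\Phi\colon(G\ltimes_R E,C^{can})\to(G\ltimes_{R'}E',C'^{can})$ over $G$ equals $\psi^\wedge$ for a unique strict isomorphism $\psi\colon R\to R'$. The first and main step is that $\Phi$ is \emph{diagonal} for the homogeneous decompositions: being a simplicial map it commutes with restriction to every sub-simplex $\alpha\subset\Delta[n]$, and in a semi-direct product $v|_\alpha\in C^{can}$ is equivalent to $\pi_\alpha(v)=0$ (Section~\ref{section:splitting}), so flatness of $\Phi$ forces $\pi'_\alpha\circ\Phi=B_\alpha\circ\pi_\alpha$ for suitable bundle maps $B_\alpha\colon x_{\alpha(k)}^*E_k\to x_{\alpha(k)}^*E'_k$, that is, $\Phi(e,\alpha,g)=(B_\alpha e,\alpha,g)$. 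The second step uses compatibility of $\Phi$ with the positive faces and degeneracies, whose formulas on homogeneous elements are recorded in Remark~\ref{rmk:alt-formulas}, to collapse the family $\{B_\alpha\}$ to the vertex-map pullbacks of a single graded isomorphism $\psi_0\colon E\to E'$ over $G_0$ --- namely the chain isomorphism attached to $\Phi|_{G_0}$ by Dold--Kan, which commutes with the differentials $R_0,R'_0$. The third step applies $\pi'_\iota$ to $\Phi\circ d_0=d_0\circ\Phi$ evaluated on $(e,\sigma_n,u_0^n g)$ and invokes the formula $R_m^g(e)=(-1)^{m+n-1}\pi_\iota d_0(e,\sigma_n,u_0^n g)$ of Theorem~\ref{thm:splitting} to obtain $R'^g_m\circ(\psi_0)_n=(\psi_0)_{n+m-1}\circ R^g_m$ for all $g\in G_m$ and all degrees, which is precisely axiom RH4) for the strict datum $\psi=(\psi_0,0,0,\dots)$ (axiom RH3) being vacuous). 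Hence $\psi$ is a strict isomorphism $R\to R'$, and $\Phi=\psi^\wedge$ by comparison with the formula for $\psi^\wedge$ on strict morphisms, so $[R]=[R']$. Together with the previous two steps this shows $\mathfrak{s}$ is a bijection.

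The step I expect to be hardest is the diagonality claim inside the injectivity argument: it requires combining flatness of $\Phi$ with the description of $C^{can}$ across all sub-simplices simultaneously, and then the bookkeeping with the vertex maps $x_i$ needed to see that the $B_\alpha$ descend to one chain isomorphism $\psi_0$. Once diagonality is in hand, the face, degeneracy, and $d_0$ compatibilities are routine calculations with the explicit formulas already established.
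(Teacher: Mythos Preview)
The paper gives no proof of this corollary at all --- it simply says the two theorems ``combine'' to yield it --- so your plan is filling in details the authors leave implicit.  Your overall architecture (well-definedness via Proposition~\ref{prop:functoriality}, surjectivity via Theorem~\ref{thm:splitting}, injectivity by showing every flat isomorphism between semi-direct products is $\psi^\wedge$ for a strict $\psi$) is sound, and the diagonality step works exactly as you describe: since $\Phi$ commutes with each restriction $\alpha^*$ and flatness means $\Phi(\ker\pi_{\iota_k})\subset\ker\pi'_{\iota_k}$, one gets $\pi'_\alpha\Phi = B_\alpha\,\pi_\alpha$ with $B_\alpha$ the pullback of some $B_k:x_k^*E_k\to x_k^*E'_k$ along $\alpha^*:G_n\to G_k$.

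There is, however, a genuine gap in your second step.  Compatibility with positive faces and degeneracies alone does \emph{not} force $B_k$ to be pulled back from $G_0$ along $x_k$.  Tracing through the formulas in Remark~\ref{rmk:alt-formulas}, every such compatibility reduces to the tautology $B_\alpha^h = B_{\iota_k}^{h\alpha}$ and imposes no relation between $B_{\iota_k}^g$ and $B_{\iota_k}^{g'}$ for distinct $g,g'\in G_k$ with the same last vertex.  What you actually need is that a flat isomorphism commutes with the push-forwards $p_i$ of Lemma~\ref{lemma:push-forward}: since $h_i(v)$ is \emph{uniquely characterised} by lying in the cleavage together with face/restriction conditions, and $\Phi$ preserves cleavages and is simplicial, $\Phi(h_i(v))=h'_i(\Phi(v))$, hence $\Phi p_i = p'_i\Phi$ and $\Phi r = r'\Phi$.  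Then on $K_k=x_k^*E_k$ one has $r'\Phi = \Phi r$, and since $r$ carries the fibre over $g$ isomorphically to the fibre over $u_0^k(x_k(g))$, this yields $B_k^g = B_k^{u_0^k(x_k(g))}$, i.e.\ $B_k = x_k^*\psi_0$.  (This is essentially the mechanism behind Proposition~\ref{prop:vee}, which the paper proves \emph{after} the corollary.)  Once this is in place, your third step goes through as written.

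Incidentally, your difficulty assessment is inverted: diagonality is the routine part, and the collapse of $\{B_\alpha\}$ to a single $\psi_0$ is where the flatness hypothesis does real work.
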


Our final goal is to upgrade this correspondence to an equivalence of categories. Note that when splitting a higher vector bundle $q:V\to G$, while $R_0$ is canonical, the higher operators $R_n$ may depend on the choice of $C$. 
The dependence of $R$ on the cleavage $C$ can be framed within the problem of studying when a morphism $\phi:(G\ltimes E)\to (G\ltimes E')$ is in the image of $\ltimes$.

\begin{proposition}\label{prop:vee}
Let $\phi:(G\ltimes_ E)\to (G\ltimes_{R'} E')$ be a higher vector bundle morphism between semi-direct products, equipped with their canonical cleavages. 
\begin{enumerate}[a)]
    \item If $i\notin\beta$, then $\pi_\beta \phi(e,\alpha,g)=\begin{cases}\pi_{\upsilon_{i}\beta} \phi(e,\upsilon_i\alpha,g\delta_i) & i\notin\alpha\\ 0 & \text{otherwise} \end{cases}$
    \item If $\phi$ is weakly flat and $(e,\alpha,g)\in G\ltimes_RE$ then $r_{\alpha(k)}\phi(e,\alpha,g)=\phi r_{\alpha(k)}(e,\alpha,g)$. 
    \item If $\phi$ is weakly flat then $\phi^\vee:(R:G\action E)\to (R':G\action E')$ given by 
$$(\phi^\vee)_m\in\Gamma\big(G_m;\Hom^{(m)}(s_0^*E,t_0^*E')\big)
\qquad (\phi^\vee)_m^g(e)=\pi_{\iota}\phi(e,\sigma_n,u_0^n(g)),$$
is a morphism of representations up to homotopy such that $(\phi^\vee)_0=\phi|_{E}:E\to E'$.
    \item  $(\psi^\wedge)^\vee=\psi$ for every $\psi$, and $(\phi^\vee)^\wedge=\phi$ when $\phi$ is weakly flat.
\end{enumerate}
\end{proposition}

\begin{proof}
Item a) asserts that $\phi$ increases the support and that the components $\pi_\iota\phi$ determine every other $\pi_\beta\phi$. Its proof is analogous to that of Lemma \ref{lemma:support}, replacing $d_0$ by $\phi$.

To prove b), it is enough to show that $\phi$ commutes with the first push-forwards, and this follows from the identity 
$\phi h_i(e,\alpha,g)=h_i\phi(e,\alpha,g)$ if $i<\alpha(k)$. Let us prove this, namely that $\phi h_i(e,\alpha,g)$ is the $i$-th transport of $\phi(e,\alpha,g)$. 
Items (i) and (ii) of Lemma \ref{lemma:push-forward} are clear. To prove (iii),
we have to check that $(\phi h_i(e,\alpha,g))|_\beta\in C$ for every $\{i+1,i\}\subset\beta\subset[n]$, and since $\phi$ is weakly flat, this would follow from $ h_i(e,\alpha,g)|_{\beta\sigma_r}\in C$ for every $r>0$ and $h_i(e,\alpha,g)|_{\beta\sigma_0}=0$.
Since $d_{i+1}h_i(e,\alpha,g)=(e,\alpha,g)$, it follows from the formulas in Remark \ref{rmk:alt-formulas} that $h_i(e,\alpha,g)|_{\sigma_i}=0$, and therefore $h_i(e,\alpha,g)|_{\beta\sigma_r}=0$ whenever $\beta(r)\leq i$. And if $\beta(r)>i$, then $\{i,i+1\}\in \beta\sigma_r$, and $h_i(e,\alpha,g)|_{\beta\sigma_r}\in C$ by property (iii) of $h_i$.



Regarding c), if $m>0$, then 
$\phi(e,\sigma_n,u_0^nu_i(g))=u_{i+n}\phi(e,\sigma_n,u_0^n(g))$ is degenerate, belongs to $C$, and therefore $(\phi^\vee)_m^{u_i(g)}(e)=0$, proving  RH3).  The argument behind the proof of RH4) is the same as in Proposition \ref{prop:functoriality}.
Given $m\geq 0$, $g\in G_m$ and $e\in E_n^{s_0(g)}$, writing $\tilde g=g\upsilon^n_0$, and using the previous items a) and b), the formula for $\pi_\iota d_0$ in Definition \ref{def:sdp-faces-degeneracies} applied to $(-1)^{n-1}\pi_\iota d_0\phi(e,\sigma_n,\tilde g)$ yields the left-hand side of RH4) for $\phi^\vee$. Similarly, $(-1)^{n-1}\pi_\iota \phi d_0(e,\sigma_n,\tilde g)$ yields the right-hand side of RH4).
Since $d_0\phi=\phi d_0$, we get RH4). 

The claims in d) are clear.
\end{proof}


\begin{remark}
If $\psi:(R:G\action E)\to (R':G\action E)$ is a gauge equivalence then $\psi^\wedge:(G\ltimes_RE)\to(G\ltimes_{R'}E)$ is a weakly flat isomorphism by Proposition \ref{prop:functoriality}, and the {\bf twisting} $C^\psi=(\psi^\wedge)^{-1}(C^{can})$ of $C^{can}$ by $\psi$ is a new normal weakly flat cleavage in $G\ltimes_R E$ such that $\id$ is weakly flat:
$$\id:(G\ltimes_RE,C^{can})\to(G\ltimes_RE,C^\psi).$$
For VB-groupoids, every normal cleavage in $G\ltimes E$ is the twisting of $C^{can}$ by a gauge equivalence, and the representations up to homotopy coming from different choices of normal cleavages form a gauge-equivalence class, see \cite[Thm 5.10]{gsm}. In general, if $C$ is normal and weakly flat and 
$\id:(G\ltimes_RE,C^{can})\to(G\ltimes_RE,C)$ is weakly flat, then $C=C^\psi$ for some $\psi$. Example \ref{ex:not-full} shows that there are normal weakly flat cleavages on $G\ltimes_RE$ that are not of the type $C^\psi$. 
\end{remark}


These results show a 1-1 correspondence between representations up to homotopy up to isomorphisms and higher vector bundles equipped with a cleavage up to weakly flat isomorphism. More generally, 
Propositions \ref{prop:functoriality} and \ref{prop:vee} combine to produce our Main Theorem \ref{thm:main-intro}



\begin{example}\

\begin{enumerate}[a)]
    \item When $G=M$ is a unit groupoid, any higher vector bundle admits a unique normal cleavage, see Example \ref{ex:cleavages}, which is flat, see Lemma \ref{lemma:dk-groupoid}, and any morphism is flat, so $U$ is an equivalence, and we recover the classic Dold-Kan correspondence, see Remark \ref{rmk:dk-projective}. 
    \item Combining Remark \ref{rmk:forgetful} and Theorem \ref{thm:main-intro} we recover the equivalence between representations and 0-vector bundles $\ltimes:Rep^0(G)\to VB^0(G)$ mentioned in Proposition \ref{prop:equiv-def}.
    \item When working with VB-groupoids, again by Remark \ref{rmk:forgetful},  we recover the equivalence given by the Grothendieck construction $\ltimes: Rep^1(G)\to VB^{1}(G)$ proven in \cite[Thm 6.1]{gsm} and \cite[Thm 2.7]{dho}. 
\end{enumerate}    
\end{example}

\begin{remark}\label{rmk:final}
Theorem \ref{thm:main-intro}, even when $G$ is a set-theoretic higher groupoid, provides a novel and significant result: a concrete version of the higher Grothendieck correspondence. Studies in this direction have been conducted by Lurie, among others. In \cite[\S 3.2]{lurie}, a general equivalence is established via the {\em unstraightening functor}. When $G = C$ is a small category, the unstraightening of a strict functor $f: C \to sSets$ is modeled by the {\em relative nerve} $N_fC$ \cite[\S 3.2.5]{lurie}, and a simpler, more direct approach is presented in \cite{hm}, using the {\em standard model} $h_!(f)$ for the homotopy colimit. From this perspective, our semi-direct product $G \ltimes_R E$ plays the role of the relative nerve and the standard model, providing a concrete model for the homotopy colimit of the non-strict functor $R: G \to Ch_{\geq 0}(Vect_\R)$ induced by the representation up to homotopy.
\end{remark}


\section{Future directions and open problems}

In this final section we list some work in progress, describe potential applications, and leave some open problems related to our paper. 

\subsection*{Existence of cleavages and the derived equivalence}

The Dold-Kan and the Grothendieck correspondences are equivalences of categories, but they also induce equivalences at the derived level. 
Similarly, a derived version of our main theorem will be addressed in \cite{dht}. Given a higher vector bundle $q: V\to G$, in order to define a representation up to homotopy, we have used a normal weakly flat cleavage, but we have seen in Example \ref{ex:es} that higher vector bundles may not admit a cleavage at all.  

We solve this issue by relaxing the notion of cleavage. Given $q:V\to G$ a higher vector bundle, define a {\bf pre-cleavage} $C_{n,k}$, $0\leq k<n$, as a collection of sub-bundles of $C_{n,k}\subset V_n$ such that the relative horn maps $d^q_{n,k}:C_{n,k}\to d_{n,k}^*V_{n,k}$ are isomorphism, and call a pre-cleavage {\bf normal} if each $C_{n,k}$ contains the degenerate simplices. Note that a cleavage is the same as a pre-cleavage where $C_{n,k}=C_{n,k'}$ for every $k,k'$. 

\begin{proposition}\cite{dht}
Every higher vector bundle $q:V\to G$ admits a normal pre-cleavage $C_{n,k}$. And if it admits a normal cleavage $C_n$, then it also admits a normal weakly flat one.
\end{proposition}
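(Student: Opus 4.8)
The statement breaks into two independent assertions, and I would handle them in order.

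\emph{A normal pre-cleavage always exists.} Fix $n$ and $k<n$. Because $q$ is a higher vector bundle, the relative horn map $d^q_{n,k}\colon V_n\to d_{n,k}^*V_{n,k}$ is a fiberwise surjective morphism of vector bundles over $G_n$ whose kernel is a sub-bundle of rank $\rk E_n$ (the rank count appears in the proof of Theorem \ref{thm:sdp}, compare Remark \ref{rmk:filtration}). Thus an $(n,k)$-component of a pre-cleavage is precisely a vector-bundle splitting $s\colon d_{n,k}^*V_{n,k}\to V_n$ of $d^q_{n,k}$, and normality amounts to $\im s\supset D_n$. The key point is that $d^q_{n,k}$ is fiberwise injective on the degenerate simplices: if $v=u_j(w)$ lies in $\ker d^q_{n,k}$ then, since at least one of $j,j+1$ differs from $k$, applying $d_j$ or $d_{j+1}$ forces $w=0$, and the Dold--Kan splitting on each fiber (Lemma \ref{lemma:dk-groupoid}) upgrades this to fiberwise injectivity on the linear span of the $u_j(V_{n-1})$. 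Hence it is consistent to prescribe $s$ to be the tautological inclusion on $d^q_{n,k}(D_n)$. Since the degenerate simplices are supported over the closed, stratified locus $\bigcup_j u_j(G_{n-1})\subset G_n$, over each stratum of which their span is a genuine sub-bundle, one extends $s$ stratum by stratum, and finally over all of $G_n$, using that the sheaf of splittings of a vector-bundle epimorphism is an affine (hence soft) bundle, so that sections over closed subsets always extend. No compatibility between distinct $k$ or distinct $n$ is imposed, which is exactly why pre-cleavages --- unlike cleavages --- exist unconditionally.

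\emph{A normal cleavage can be corrected to a normal weakly flat one.} Suppose $V$ carries a normal cleavage $C=\{C_n\}$. Over the unit groupoid $G_0\subset G$ every normal cleavage coincides with the Dold--Kan cleavage $D$ (Lemma \ref{lemma:dk-groupoid}), which is flat, so the failure of $C$ to be weakly flat is concentrated away from the units and is measured linearly in the core: for $w\in C_n$ with $s_0(w)=0$, $s_k(w)\in C$ for $k>0$ and $d_i(w)\in C$ for $i>0$, the obstruction to $d_0(w)\in C_{n-1}$ is the class $\pi_K\bigl(d_0(w)\bigr)\in E_{n-1}$. The plan is to kill these obstructions by induction on $n$: running the computations behind Lemmas \ref{lemma:support} and \ref{lemma:first-arrows} for a merely normal cleavage exhibits $\pi_K d_0$ as a cocycle-type expression in the operators extracted from $C$, and one then twists $C_n$ by a bundle automorphism of $V_n$ equal to the identity on every $\ker d^q_{n,k}$, $k<n$, and supported near the degenerate/zero-section locus --- a higher analogue of the gauge twisting discussed after Theorem \ref{thm:splitting} --- chosen to annihilate the obstruction while preserving normality and leaving $C_0,\dots,C_{n-1}$ unchanged. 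Iterating over $n$ produces the required $C'$; for $N\le 1$ there is nothing to do, recovering that every normal cleavage of a VB-groupoid is weakly flat (Example \ref{ex:cleavages}).

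\emph{Where the difficulty lies.} The substantive part is the second assertion, and in it the verification that the level-$n$ twist can be chosen to repair the defect for all admissible $w$ at once, stay compatible with the finitely many conditions $k<n$ defining a cleavage, respect normality, and not reintroduce a defect once the lower levels are already fixed. This is a Reedy/skeletal bookkeeping argument --- first correcting over the non-degenerate part of $G_n$ and then extending over the degenerate locus, exactly as in the first assertion --- and closing the induction is the technical heart, for which we refer to \cite{dht}.
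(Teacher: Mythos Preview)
The paper does not contain a proof of this proposition: it is announced in Section~8 as a result from the forthcoming companion paper \cite{dht}, with no argument given. There is therefore nothing in the present paper to compare your proposal against.

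That said, a few comments on your sketch. For the first assertion your outline is reasonable and identifies the right mechanism: since a pre-cleavage imposes no compatibility between distinct pairs $(n,k)$, each $C_{n,k}$ is just a linear splitting of the surjection $d^q_{n,k}$, and normality is a constraint only over the degeneracy locus $\bigcup_j u_j(G_{n-1})\subset G_n$. Your injectivity claim for $d^q_{n,k}$ on $D_n$ is stated only for homogeneous $u_j(w)$; it does extend to all of $D_n$ via the fiberwise Dold--Kan splitting, but this step deserves a line. The stratified-extension argument is the right shape, though making it precise over the intersections $u_i(G_{n-1})\cap u_j(G_{n-1})$ requires some care.

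For the second assertion you do not actually give a proof: you describe the obstruction, propose an inductive twisting scheme, and then explicitly defer the verification that the induction closes to \cite{dht} --- which is precisely what the paper itself does. So on this half your proposal is at the same level of detail as the paper, namely an announcement rather than an argument. The genuine content (that the level-$n$ correction can be chosen compatibly with all $k<n$ and with normality, without disturbing lower levels) is exactly what you flag as ``the technical heart'', and it remains unaddressed.
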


It turns out that a normal pre-cleavage is enough to induce a representation up to homotopy on its core, though the formulas are more involved, and demand subtle simplicial decompositions. Using this as a homotopy inverse for the semi-direct product, we get the following.

\begin{proposition}[\cite{dht}]
The semi-direct product sets a 1-1 correspondence between homotopy classes of representations up to homotopy and of higher vector bundles over $G$.
\end{proposition}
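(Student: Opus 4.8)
The plan is to assemble the equivalence from the four results already established in Sections~\ref{section:sdp} and~\ref{section:splitting}: Theorem~\ref{thm:sdp}, Proposition~\ref{prop:functoriality}, Theorem~\ref{thm:splitting}, and Proposition~\ref{prop:vee}. First I would record that $\ltimes_+$ is a well-defined functor. On objects, Theorem~\ref{thm:sdp} says $\ltimes_+(R)=(G\ltimes_R E,C^{can})$ is a higher vector bundle equipped with a normal weakly flat cleavage, hence an object of $VB^\infty_+(G)$; on morphisms, Proposition~\ref{prop:functoriality} says that $\psi\mapsto\psi^\wedge$ sends a morphism of representations up to homotopy to a weakly flat morphism of higher vector bundles and is compatible with identities and composition.

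The next step is essential surjectivity. Given $(V,C)\in VB^\infty_+(G)$ with core $E$, Theorem~\ref{thm:splitting} produces a representation up to homotopy $R:G\action E$ (built from $C$ via the operators $R_m^g(e)=(-1)^{m+n-1}\pi_\iota d_0(e,\sigma_n,u_0^n(g))$) together with an isomorphism of simplicial vector bundles $\phi:V\to G\ltimes_R E$ satisfying $\phi(C)=C^{can}$. Then both $\phi$ and $\phi^{-1}$ are flat, in particular weakly flat, so $\phi$ is an isomorphism in $VB^\infty_+(G)$ between $(V,C)$ and $\ltimes_+(R)$; hence every object of $VB^\infty_+(G)$ lies in the essential image of $\ltimes_+$.

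It remains to prove full faithfulness. Fix representations up to homotopy $R:G\action E$ and $R':G\action E'$. By construction, $\Hom_{VB^\infty_+(G)}(\ltimes_+ R,\ltimes_+ R')$ is the set of weakly flat simplicial vector bundle morphisms $\phi:G\ltimes_R E\to G\ltimes_{R'} E'$; for any such $\phi$, Proposition~\ref{prop:vee}(b) yields a morphism $\phi^\vee:R\to R'$ of representations up to homotopy, where we invoke the remark following the definition of the $R_m$ in Section~\ref{section:splitting}, that the representation up to homotopy associated to the canonical cleavage of a semi-direct product is the original one, so that the source and target of $\phi^\vee$ are indeed $R$ and $R'$. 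Proposition~\ref{prop:vee}(c) then gives $(\psi^\wedge)^\vee=\psi$ for every morphism $\psi$ of representations up to homotopy and $(\phi^\vee)^\wedge=\phi$ for every weakly flat $\phi$; the first identity forces $\psi\mapsto\psi^\wedge$ to be injective and the second forces it to be surjective, so it is a bijection. A functor that is fully faithful and essentially surjective is an equivalence, which completes the proof.

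The genuine difficulties have already been absorbed into Theorems~\ref{thm:sdp} and~\ref{thm:splitting} and Propositions~\ref{prop:functoriality} and~\ref{prop:vee}, so the only point that requires attention here is bookkeeping: verifying that the morphisms in the two categories line up exactly, namely that ``weakly flat morphism between semi-direct products equipped with their canonical cleavages'' coincides with the notion of morphism in $VB^\infty_+(G)$, and that $\phi^\vee$ is a morphism between the correct pair of representations rather than some gauge-twisted variant. I expect this matching to be the subtlest part, but it involves no new computation.
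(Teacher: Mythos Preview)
You have proved the wrong statement. What you wrote is essentially a proof of Theorem~\ref{thm:main}, the equivalence of categories $\ltimes_+:Rep^\infty(G)\to VB^\infty_+(G)$, and indeed your argument is exactly how the paper derives that theorem from Theorems~\ref{thm:sdp}, \ref{thm:splitting} and Propositions~\ref{prop:functoriality}, \ref{prop:vee}. But the proposition you were asked to prove is a different, strictly stronger assertion: it concerns \emph{homotopy classes}, and on the geometric side the target is the class of \emph{all} higher vector bundles over $G$, not the category $VB^\infty_+(G)$ of higher vector bundles equipped with a normal weakly flat cleavage.

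This distinction is not cosmetic. The paper shows in Example~\ref{ex:es} that a higher vector bundle need not admit any cleavage whatsoever, so the forgetful functor $U:VB^\infty_+(G)\to VB^\infty(G)$ is not essentially surjective, and your essential surjectivity step (which invokes Theorem~\ref{thm:splitting} and hence presupposes a normal weakly flat cleavage) simply does not apply to an arbitrary higher vector bundle. The paper explains that the proof requires new ingredients developed in \cite{dht}: one replaces cleavages by \emph{pre-cleavages} $C_{n,k}$, shows that every higher vector bundle admits a normal pre-cleavage, and then extracts a representation up to homotopy from a pre-cleavage via ``more involved'' formulas and ``subtle simplicial decompositions''. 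Only after this is in place can one argue that the semi-direct product induces a bijection at the level of homotopy classes. None of that machinery is available from Sections~\ref{section:sdp} and~\ref{section:splitting}, so your proposal, while internally correct as a proof of Theorem~\ref{thm:main}, does not address the proposition at hand.
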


We expect to set an equivalence $\ltimes: Rep^\infty[G]\to VB^\infty[G]$ between the derived categories. The derived category of representations up to homotopy $Rep^\infty[G]$ can be described either as the localization of $Rep^\infty(G)$ by the quasi-isomorphisms or as the quotient of $Rep^\infty(G)$ modulo homotopies \cite[Def. 3.9, Prop. 3.28]{ac}. The construction of $VB^\infty[G]$ should be similar.


\subsection*{Tensor products}

Tensor products of representations up to homotopy arise naturally, but their construction is not canonical, there are choices to be made, and the combinatorics ruling it is rather subtle \cite{acd}. Our intrinsic approach can be of use to get a simpler description of the theory.

The Dold-Kan correspondence admits a monoidal version, built over the Eilenberg-Zilber Theorem, which sets a homotopy equivalence between the tensor product of chain complexes and the tensor product of the corresponding simplicial abelian groups, see e.g. \cite[\S 29]{may}. It is natural to expect a version of this result in our context, where a natural candidate for the tensor product of two higher vector bundles $V,V'\to G$ is defined level-wise:
$$(V\otimes V')_n=V_n\otimes V'_n \qquad d_i(v\otimes v')=d_i(v)\otimes d_i(v') \qquad u_j(v\otimes v')=u_j(v)\otimes u_j(v').$$

\begin{problem}
Develop the theory of tensor products of higher vector bundles, in a way compatible with the tensor product of chain complexes and of usual representations, and relate it to the tensor product of representations up to homotopy \cite{acd} via $\ltimes$.
\end{problem}


Some potential applications of the theory would be to revisit the Bott's spectral sequence of a Lie groupoid $G$ with an intrinsic description of the symmetric powers of $Ad(G)$ \cite{ac}, and to regard multiplicative forms and tensors as morphisms of higher vector bundles among the exterior powers of the adjoint and the co-adjoint representations $Ad(G),Ad^*(G)$ \cite{bc}.





\subsection*{Associated bundle construction for $\infty$-local systems}

In the set-theoretic framework, our semi-direct product gives a new explicit model for the homotopy colimit of the lax functor $R:G\to Ch_{\geq0}(Vect_\R)$, generalizing in some sense the constructions of \cite[3.2.5]{lurie} and \cite{hm}, which only hold for strict functors from a 1-category, see Remark \ref{rmk:final}.
A natural problem is to try to adapt our construction to other more general contexts, replacing $G$ by any simplicial set, and $Ch_{\geq0}(Vect_\R)$ by any dg-category $\mathcal C$.

A similar framework is that of {\bf $\infty$-local system}, introduced in \cite[2.1]{bs}, where given $M$ a smooth manifold, a higher Riemann-Hilbert correspondence is established by integration over simplices:
$$RH:P_{\Omega^\bullet(M)}\to Loc^\infty_{Ch(Vect)}S(M).$$
This relates graded vector bundles $\oplus_n E_n\to M$ with a flat $\Z$-connection, and $\infty$-local systems over the singular simplicial set $S(M)$ with coefficients in $Ch_{\geq0}(Vect_\R)$. A constructible version of this construction is studied in \cite{su}. A problem posed by Block and Smith is the following:

\begin{problem}
Define an inverse functor $Loc^\infty_{Ch(Vect)}S(M)\to P_{\Omega^\bullet(M)}$ realizing the Riemann-Hilbert correspondence, making use of an associated bundle construction.    
\end{problem}

We believe that our semi-direct product may be adapted to solve this problem. From this perspective, the flat $\Z$-connection should be induced by a cleavage on the associated bundle, which would be an intrinsic object.



\subsection*{Lie theory and Morita invariance of higher vector bundles}

The VB-groupoid approach has played a fundamental role in elucidating the Lie theory and the Morita invariance of 2-term representations up to homotopy \cite{bcdh}, \cite{dho}.
Generalizing this, we expect that our semi-direct product construction will play a role tackling these problems for more general representations up to homotopy.

There is a useful approach to vector bundles in the smooth setting, by characterizing them via the scalar multiplication \cite{gr}. This was used in \cite{bcdh} to characterize VB-groupoids and VB-algebroids, and to describe their Lie theory. Analogously, it is natural to characterize higher vector bundles $q: V\to G$ via scalar multiplication and study their differentiation. The infinitesimal counterpart should be Vaintrob's $A$-modules \cite{vaintrob}, which are the intrinsic object associated with Lie algebroid representations up to homotopy \cite{mehta}. A version of this differentiation functor was constructed by Arias Abad and Shaetz \cite{as}, as well as the integration to the $\infty$-dimensional $\infty$-groupoid $\Pi_\infty(A)$. 

\begin{problem}
Show that higher vector bundles differentiate to produce Vaintrob's $A$-modules, and explore when such an $A$-module integrates to a higher vector bundle.
\end{problem}

The Morita invariance of the derived category $Rep^\infty[G]$ was already conjectured in \cite{ac}. In the 2-term case, this is the main result of \cite{dho}, based on the theory of VB-groupoids. Analogously, one would expect to apply the semi-direct product to solve the following:

\begin{problem}
Show that if $q: G'\to G$ is a hypercover between higher Lie groupoids, then the pullback $q^*:VB^\infty[G]\to VB^\infty[G']$ gives an equivalence between the derived categories.    
\end{problem}

A cohomological version was recently proven in \cite{dhos} using the semi-direct product.






{\small

}

{\small

\noindent M. del Hoyo. Universidade Federal Fluminense, Rua Prof. Marcos Waldemar de Freitas Reis, 24210-201, Niter\'oi, Brazil. {\it Email:} mldelhoyo@id.uff.br

\

\noindent G. Trentinaglia. CMAGDS, Instituto Superior T\'ecnico, Universidade de Lisboa, %
      Av.\ Rovisco Pais, 1049-001 Lisboa, Portugal. {\it Email:} gtrentin@math.tecnico.ulisboa.pt

\end{document}